\newcommand{\pl}[1]{\foreignlanguage{polish}{#1}}
\theoremstyle{plain}
\newtheorem{theorem}{Theorem}[section]
\newtheorem{proposition}{Proposition}[section]
\newtheorem{lemma}{Lemma}[section]
\newtheorem{remark}{Remark}[section]
\theoremstyle{definition}
\numberwithin{equation}{section}
\theoremstyle{plain}
\newcounter{thm}
\newtheorem{main_theorem}[thm]{Theorem}
\newcommand{\RR}{\mathbb{R}}
\newcommand{\ZZ}{\mathbb{Z}}
\newcommand{\TT}{\mathbb{T}}
\newcommand{\CC}{\mathbb{C}}
\newcommand{\NN}{\mathbb{N}}
\newcommand{\QQ}{\mathbb{Q}}
\newcommand{\boldB}{\mathbf{B}}
\newcommand{\boldA}{\mathbf{A}}
\newcommand{\calC}{\mathcal{C}}
\newcommand{\calP}{\mathcal{P}}
\newcommand{\calQ}{\mathcal{Q}}
\newcommand{\calF}{\mathcal{F}}
\newcommand{\calO}{\mathcal{O}}
\newcommand{\calT}{\mathcal{T}}
\newcommand{\calN}{\mathcal{N}}
\newcommand{\seq}[2]{{#1}: {#2}}
\newcommand{\ind}[1]{{\mathds{1}_{{#1}}}}
\newcommand{\dist}{\operatorname{dist}}
\newcommand{\lcm}{\operatorname{lcm}}
\newcommand{\supp}{\operatorname{supp}}
\renewcommand{\atop}[2]{\substack{{#1}\\{#2}}}
\newcommand{\norm}[1]{{\left\lvert #1 \right\rvert}}
\newcommand{\sprod}[2] {{#1 \cdot #2}}
\newcommand{\abs}[1]{{\lvert {#1} \rvert}}
\newcommand{\sabs}[1]{{\left\lvert {#1} \right\rvert}}
\newcommand{\vnorm}[1]{{\left\lVert {#1} \right\rVert}}
\newcommand{\vrho}{\varrho}
\title[Discrete operators of Radon type]
{$\ell^p(\mathbb Z^d)$-estimates for discrete operators of Radon type:\\
 Maximal functions and vector-valued estimates}
\author{Mariusz Mirek}
\address{Mariusz Mirek \\
Department of Mathematics,
Rutgers University,
Piscataway, NJ 08854, USA \&
	Instytut Matematyczny\\
	Uniwersytet \pl{Wroc{\lll}awski}\\
	Plac Grun\-waldzki 2/4\\
	50-384 \pl{Wroc{\lll}aw}\\
	Poland}
\email{mariusz.mirek@rutgers.edu}
\author{Elias M. Stein}
\address{
	Elias M. Stein\\
	Department of Mathematics\\
	Princeton University\\
	Princeton\\
	NJ 08544-100, USA}
\email{stein@math.princeton.edu}
\author{Bartosz Trojan}
\address{
    Bartosz Trojan\\
    Instytut Matematyczny Polskiej Akademii Nauk\\
    ul. \pl{{\'S}niadeckich} 8\\
    00-656 Warszawa\\
    Poland}
\email{btrojan@impan.pl}
\thanks{ Mariusz Mirek was supported by Foundation for Polish Science
  - START scholarship and by NCN grant DEC-2015/19/B/ST1/01149.  Elias
  M. Stein was partially supported by NSF grant DMS-1265524.  }
\begin{document}
\selectlanguage{english}

\begin{abstract}
We prove $\ell^p\big(\mathbb Z^d\big)$ bounds, for $p\in(1, \infty)$,
of discrete maximal functions corresponding to averaging operators and
truncated singular integrals of Radon type, and their applications to
pointwise ergodic theory. Our new approach is based on a unified
analysis of both types of operators, and also yields an extension to
the vector-valued form of these results.
\end{abstract}

\maketitle

\section{Introduction}
The aim of this paper\footnote{A more detailed version of this paper
is available on arXiv: \texttt{https://arxiv.org/abs/1512.07518}} is
to study discrete analogues of operators of Radon type. A wide class
of interesting questions at the interface of harmonic analysis, number
theory and ergodic theory arise when discrete averaging operators or
singular integrals modeled on polynomial mappings are studied. The
approach undertaken in this paper has the merit of unifying results of
two major streams of the theory, the Bourgain maximal theorems, and
the theorem of Ionescu--Wainger for singular Radon transforms. In
particular, we obtain analogous results for the maximal truncated
singular Radon transforms, and for both not only in the scalar-valued
case but also in a vector-valued version.

To begin, assume that $K \in \calC^1\big(\RR^k \setminus \{0\}\big)$ is a
Calder\'{o}n--Zygmund kernel satisfying the differential inequality
\begin{align}
	\label{eq:3}
	\norm{y}^k \abs{K(y)} + \norm{y}^{k+1} \norm{\nabla K(y)} \leq 1
\end{align}
for all $y \in \RR^k$ with $\norm{y} \geq 1$ and the cancellation
condition
\begin{align}
	\label{eq:4}
	\sup_{\lambda \geq 1}
	\Big\lvert
	\int\limits_{1 \leq \norm{y} \leq \lambda} K(y) {\: \rm d} y
	\Big\rvert
	\leq 1.
\end{align}
Let 
\[
	\calP=\big(\calP_1,\ldots, \calP_{d_0}\big): \ZZ^{k} \rightarrow \ZZ^{d_0}
\]
be a polynomial mapping, where for each $j \in \{1, \ldots, d_0\}$ the
function $\calP_j:\ZZ^{k} \rightarrow \ZZ$ is a polynomial of $k$ variables with
integer coefficients  such that $\calP_j(0) = 0$. Among other things, we are interested in discrete truncated singular Radon transforms
\begin{align}
  \label{eq:24}
  T_N^{\calP} f(x)
=
\sum_{y\in\ZZ_N^k\setminus\{0\}} f\big(x - \calP(y)\big) K(y)
\end{align}
defined for a finitely supported function $f: \ZZ^{d_0} \rightarrow
\CC$, where $\ZZ_N^k=\{-N,\ldots,-1,0,1,\ldots,N\}^k$. 

Our starting point was the proof of the following theorem.
\begin{main_theorem}
	\label{thm:0}
        For every $p \in (1, \infty)$ there is $C_p > 0$ such that for all
		$f \in \ell^p\big(\ZZ^{d_0}\big)$ we have
		$$
		\big\lVert
		\sup_{N\in\NN} \big\lvert T_N^\calP f \big\rvert
		\big\rVert_{\ell^p}
		\leq
		C_p \vnorm{f}_{\ell^p}.
		$$
		Moreover, the constant $C_p$ is independent of the  coefficients of the polynomial mapping 
		$\calP$.
\end{main_theorem}
Once this theorem was proven, it became clear that the approach  used
gives a different proof of the Bourgain theorem, and ultimately  the
vector-valued versions given in Theorems \ref{thm:10} and \ref{thm:11} below.

Theorem \ref{thm:0} generalizes recent result of Ionescu and Wainger
\cite{iw}, where  $\ell^p\big(\ZZ^{d_0}\big)$
boundedness was shown, for any $p\in(1, \infty)$, for the discrete singular
Radon transform
\begin{align*}
  T^{\calP}f(x)=\sum_{y\in\ZZ^k\setminus\{0\}} f\big(x - \calP(y)\big) K(y).
\end{align*}

Theorem \ref{thm:0} also has applications in pointwise ergodic
theory, which to a large extent motivates the present work.  Namely, let
$(X, \mathcal{B}, \mu)$ be a $\sigma$-finite measure space with a
family of invertible commuting and measure preserving transformations
$S_1, S_2,\ldots,S_{d_0}$. An ergodic counterpart of $T^\calP_N$
can be defined as follows
\begin{align}
  \label{eq:17}
	\calT^\calP_N f(x)
	= \sum_{y \in \mathbb Z_N^k \setminus \{0\}} 
	f\big(S_1^{\calP_1(y)}\circ S_2^{\calP_2(y)} \circ \ldots \circ S_{d_0}^{\calP_{d_0}(y)} x\big)
	K(y).
\end{align}
We emphasize that the pointwise convergence of $T^\calP_Nf$ defined in \eqref{eq:24} can be easily obtained for any
function $f \in \ell^p\big(\ZZ^{d_0}\big)$ with $p\in(1, \infty)$ as a simple application of H\"older's inequality
and condition \eqref{eq:3}. However, the things are more complicated for the operators \eqref{eq:17} defined
on an abstract measure space $X$. We establish the following theorem. 
\begin{main_theorem}
	\label{thm:1}
	Assume that $p \in (1, \infty)$ and $K \in \calC^1\big(\RR^k \setminus \{0\}\big)$ is a
        Calder\'{o}n--Zygmund kernel satisfying \eqref{eq:3} and
        \begin{align}
          \label{eq:22}
        \int_{Q_t \setminus Q_{t'}} K(y){\: \rm d}y=0, \quad \text{for all}\quad 0< t' \le t,    
        \end{align}
where $Q_t=[-t, t]^k$. Then for every $f \in L^p(X, \mu)$ there exists
	$f^*\in L^p(X, \mu)$ such that
	\begin{align}
          \label{eq:23}
		\lim_{N\to\infty} \calT_{N}^{\calP}f(x)=f^*(x)
	\end{align}
	$\mu$-almost everywhere on $X$.
\end{main_theorem}

Theorem \ref{thm:1} can be thought of as an extension of Cotlar's
ergodic theorem \cite{cot} proving pointwise convergence for the
ergodic Hilbert transforms, which correspond to the operators
\eqref{eq:17} for $k=d_0=1$ with $\calP(x)=x$. Now the limit \eqref{eq:23}
allows us to define an ergodic singular Radon transform by setting
\[
	\calT^\calP f(x)
	= 
	\lim_{N \to \infty} \calT^\calP_N f(x).
\]
In view of the Calder\'on transference principle, the proof of Theorem
\ref{thm:1} follows from Theorem \ref{thm:0} and an oscillation inequality
on $L^2(X, \mu)$ for $\mathcal T_N^{\calP}$. Using condition \eqref{eq:22} and  arguments taken  from the
proof of Theorem \ref{thm:0}, the oscillation inequality can be easily
obtained by the methods from \cite{bou}. Alternatively, we can
mimic the proof in \cite{mt3} of $r$-variational estimates on $L^2$
leading to \eqref{eq:23} for $p=2$, which combined with
the maximal inequality yields \eqref{eq:23} for all $p\in(1, \infty)$.  The
$r$-variation estimates for the discrete operators of Radon type with
sharp ranges of parameters are the subject of \cite{mst2}.

The Bourgain-type theorems deal with the  Radon averaging operators
\begin{align}
\label{eq:28}
	 M_N^{\calP} f(x)
	=N^{-k} \sum_{y\in\NN_N^k}
	f\big(x-\calP(y)\big)
\end{align}
defined for any finitely supported function
$f: \ZZ^{d_0} \rightarrow \CC$, where
$\NN^k_N = \{1, 2, \ldots, N\}^k$. Being motivated by some questions
in the pointwise ergodic theory, Bourgain introduced and further
studied the maximal Radon transform associated to $M_N^{\calP} f$ in
\cite{bou1, bou2} and \cite{bou}. Specifically, in \cite{bou} he
proved that $\sup_{N\in\NN}|M_N^{\calP}f|$ is bounded on $\ell^p(\ZZ)$
for any $1<p\le \infty$, which corresponds to $k = d_0 = 1$. For
higher dimensional cases with general $k\ge1$ and $d_0\ge 1$ we refer
to \cite{mt3}. Quasi-invariant analogues of \eqref{eq:28}, with
polynomials of degree at most two, were also considered in
\cite{imsw}.

In this article we extend Bourgain's theorem to the vector-valued case.
Let 
\[
 \ell^p\big(\ell^2\big(\ZZ^{d_0}\big)\big)
=\Big\{(f_t: t\in\NN): \Big\|\big(\sum_{t\in\NN}|f_t|^2\big)^{1/2}\Big\|_{\ell^p}<\infty\Big\}.
\]
The main results of this paper can then be stated as Theorem \ref{thm:10} and
Theorem \ref{thm:11} below.
\begin{main_theorem}
  \label{thm:10}
	For every $p \in (1, \infty)$  there is $C_{p} > 0$ such that for all
	$\big(f_t: t\in\NN\big) \in \ell^p\big(\ell^2\big(\ZZ^{d_0}\big)\big)$ we have
	\[
    \Big\lVert\Big(
	\sum_{t\in\NN}\sup_{N\in\NN}\big|M_N^\calP f_t\big|^2\Big)^{1/2}
	\Big\rVert_{\ell^p}\le
	C_{p}\Big\|\big(\sum_{t\in\NN}|f_t|^2\big)^{1/2}\Big\|_{\ell^p}.
    \]
	Moreover, the constant 
        $C_p$ is independent of  the coefficients of the polynomial
        mapping $\calP$.
\end{main_theorem}
In fact Theorem \ref{thm:0} will be a corollary of the
following vector-valued estimates.
\begin{main_theorem}
  \label{thm:11}
	For every $p \in (1, \infty)$  there is $C_{p} > 0$ such that for all
	$\big(f_t: t\in\NN\big) \in \ell^p\big(\ell^2\big(\ZZ^{d_0}\big)\big)$ we have 
	\[
          	\Big\lVert\Big(
	\sum_{t\in\NN}\sup_{N\in\NN}\big|T_N^\calP f_t\big|^2\Big)^{1/2}
	\Big\rVert_{\ell^p}\le
	C_{p}\Big\|\big(\sum_{t\in\NN}|f_t|^2\big)^{1/2}\Big\|_{\ell^p}.
	\]
	Moreover, the constant 
        $C_p$  is independent of  the coefficients of the polynomial
        mapping $\calP$.
\end{main_theorem}

The proof of Theorem \ref{thm:10} and Theorem \ref{thm:11} will be
based on an analysis of the Fourier multipliers corresponding to
the operators $M_{2^n}^{\calP}$ and $ T_{2^n}^{\calP}$, respectively. In
order to describe the key points of our approach let us focus the
attention on $m_{2^n}$, which is the multiplier associated with
$M_{2^n}^{\calP}$,
i.e. $\calF^{-1}\big(m_{2^n}\hat{f}\big)=M_{2^n}^{\calP}f$.  For simplicity,
let us also assume that we are in the scalar case.  As in the previous papers in the subject we
employ the circle method of Hardy and Littlewood. We use these
techniques implicitly in the  analysis of the  relevant partition
of unity $\Xi_{n}$, see \eqref{eq:9},  which allow us to distinguish between asymptotic or highly
oscillatory behavior of $m_{2^n}$. The projections $\Xi_n$ play an
essential role in our further study of $m_{2^n}$. More precisely, we write
\[
m_{2^n}(\xi)=(1-\Xi_n(\xi))m_{2^n}(\xi)+\Xi_n(\xi)m_{2^n}(\xi),
\]
where $(1-\Xi_n(\xi))m_{2^n}(\xi)$ is a highly oscillatory term and
$\Xi_n(\xi)m_{2^n}(\xi)$ localizes asymptotic behavior of
$m_{2^n}(\xi)$. The highly oscillatory part is controlled by Weyl's
sums, see Section \ref{sec:3}. In the asymptotic part one approximates
the multiplier by sums over irreducible fractions with small
denominators corresponding to the integral analogue of $m_{2^n}(\xi)$
multiplied by Gauss sums.  This part requires a more sophisticated
analysis, and in particular three tools that we now highlight:
\begin{enumerate}[(i)]
\item A variant of a key idea of Ionescu--Wainger \cite{iw}: a suitable
  $\ell^p$ estimate for projection operators which are made up of sums
  corresponding to fractions $a/q$ whose denominators $q$ have
  appropriate limitation in terms of their prime power
  factorization (see \eqref{eq:9} and inequality \eqref{eq:35} below as well
as Theorem \ref{th:3}).
\item A maximal estimate in terms of dyadic sub-blocks (see
  \eqref{eq:21}). It is a consequence of a numerical maximal estimate
  (see Lemma \ref{lem:6}), which in turn is an outgrowth of the idea
  implicit in the proof of the classical Rademacher--Menshov theorem
  (see \cite{kaczste} and also \cite{ll} and \cite{mt3}).
\item A refinement of the estimates for multi-dimensional Weyl's sums in
  \cite{SW0}, where the previous limitations $N^\epsilon \leq q \leq
  N^{k-\epsilon}$ are replaced by the weaker restrictions
  $(\log{N})^\beta \leq q \leq N^k(\log{N})^{-\beta}$ for
  suitable $\beta.$
\end{enumerate}

With these ideas we will show that our unified approach allows us to
deal simultaneously with maximal operators both in the positive and
non-positive cases. It also gives an extension to the vector-valued
form of these results.

We describe now more precisely the outline of this paper and the ingredients of the proofs of
Theorem \ref{thm:10} and Theorem \ref{thm:11}. In Section \ref{sec:2}
we introduce a lifting procedure (see Lemma \ref{lem:1}) which
allows us to replace any polynomial mapping $\calP$ by the canonical
polynomial mapping $\calQ$ which has all coefficients equal to 1. As a result
our bounds will be independent of coefficients of the underlying
polynomial mapping.

Secondly, we formulate Lemma \ref{lem:6}, which is critical in
bounding the supremum norm by square functions.  In \cite{mt3} it was
seen that Lemma \ref{lem:6} can be used as a counterpart or
replacement of Bourgain's logarithmic lemma \cite[Lemma 4.1]{bou},
which was an essential tool in analysis of maximal functions
corresponding to Radon averages. Although a different logarithmic
lemma found many applications in harmonic analysis, especially in
discrete harmonic analysis, it turned out to be limited to averaging
operators. Fortunately Lemma \ref{lem:6} is more flexible and permits us to
circumvent the difficulties involved with non-positive operators. Thus
this lemma is an invaluable ingredient in the unification of operators
of Radon-type, represented by Theorem \ref{thm:10} and Theorem \ref{thm:11} respectively.

We also gather some basic tools which allow us to efficiently compare
discrete $\| \cdot \|_{\ell^p}$ norms with continuous
$\| \cdot \|_{L^p}$ norms. Finally, we formulate Theorem \ref{th:3}
which is a key ingredient in the all steps of our proofs. This theorem
was proven by Ionescu and Wainger in \cite{iw}. Theorem \ref{th:3} is
a deep result and develops the most refined tools to date in the area
of discrete harmonic analysis. The main new idea of Ionescu and
Wainger is to use the technique of strong orthogonality combined ---
in the asymptotic part of the sum --- with a sophisticated
decomposition of the denominators in terms of their prime power
factorization.

In Section \ref{sec:3} we present the variant of multidimensional
Weyl's  sum estimates with logarithmic decay, see Theorem \ref{thm:3}. 
It was known (see \cite{SW0})  for Weyl sums $S_N$ (defined in \eqref{eq:56})  that $|S_N|\le CN^{k-\delta}$
for some $\delta>0$ provided that for at least one coefficient
$\xi_{\gamma_0}$ of a phase polynomial $P$ there are integers $a$ and
$q$ such that  $(a, q)=1$, $|\xi_{\gamma_0}-a/q|\le q^{-2}$ and
$N^{\varepsilon}\le q\le N^{|\gamma_0|-\varepsilon}$ for some $\varepsilon>0$.  
In the sequel proceeding as in \cite{SW0} we will be able to prove that $|S_N|\le CN^k(\log
N)^{-\alpha}$ with arbitrary large $\alpha>0$ provided that $(\log
N)^{\beta}\le q\le N^{|\gamma_0|}(\log N)^{-\beta}$ for some large
$\beta>0$. This logarithmic decay has  great importance for our
further analysis of multipliers in highly oscillatory regime. A one
dimensional variant of Weyl's inequality with logarithmic decay was
known for some time, see for instance \cite{va}
or more recently \cite{w0} and
the references therein.

Sections \ref{sec:4} and Section \ref{sec:7} complete the proofs of Theorems \ref{thm:10} and \ref{thm:11}
respectively. To understand more quickly the structure of the proofs, the reader may begin by looking at
Sections \ref{sec:4} and \ref{sec:7} first. These sections can be read independently, assuming the results
in the previous sections.

In the Appendix \ref{sec:8}, which is self-contained, we collect
vector-valued estimates for the maximal functions of Radon type in the
continuous settings. The proofs of Theorem \ref{thm:17} and Theorem
\ref{thm:18} can be found in \cite{rrt}. However, we provide 
proofs of these results and for the
convenience of the reader we present the details.

\subsection{Notation}
Throughout the whole article, we write $A \lesssim B$ ($A \gtrsim B$) if there is an
absolute constant $C>0$ such that $A\le CB$ ($A\ge CB$).
Moreover, $C > 0$ stand for a large positive constant whose value may vary from
occurrence to occurrence. If $A \lesssim B$ and $A\gtrsim B$ hold simultaneously then we write
$A \simeq B$. We also write $A \lesssim_{\delta} B$ ($A \gtrsim_{\delta} B$) to
indicate that the constant $C>0$ depends on some $\delta > 0$. Let $\NN_0 = \NN \cup
\{0\}$. For $N \in \NN$ we set
\[
    \NN_N = \{1, 2, \ldots, N\}, \quad \text{and} \quad
    \ZZ_N = \{-N, \ldots, -1, 0, 1, \ldots, N\}.
\]
For a vector $x \in \RR^d$ we use the following norms
\[
    \norm{x}_\infty = \max\{\abs{x_j} : 1 \leq j \leq d\}, \quad \text{and} \quad
    \norm{x} = \Big(\sum_{j = 1}^d \abs{x_j}^2\Big)^{1/2}.
\]
If $\gamma\in \NN_0^k$ is a multi-index  then $\norm{\gamma} = \gamma_1 + \ldots
+ \gamma_k$. Although, we use $|\cdot|$ for the length of a multi-index $\gamma\in \NN_0^k$
and the Euclidean norm of $x\in\RR^d$, their meaning is always clear from the context.
Finally, let $\mathcal{D}=\{2^n: n\in\NN_0\}$ denote the set of dyadic numbers.

\subsection*{Acknowledgments}
The authors thank the Hausdorff Research Institute for Mathematics in Bonn for
hospitality in 2014 during the  program ``Harmonic Analysis and
Partial Differential Equations''. 
\section{Preliminaries}
\label{sec:2}
We begin this section by establishing a result that extends the
Marcinkiewicz--Zygmund inequality to the Hilbert space setting. Let
$(X, \mu)$ be a measure space and fix $p, r\in(0, \infty]$. We say
that a sequence of complex-valued functions
$(f_t: t\in \NN_0)\in L^p\big(\ell^r(X)\big)$ if
\[
\Big\|\big(\sum_{t\in\NN_0}|f_t|^r\big)^{1/r}\Big\|_{L^p}<\infty
\]
with obvious modifications when $p=\infty$ or $r=\infty$.  In our case
$(X, \mu)$ is usually $\RR^d$ with the Lebesgue measure or $\ZZ^d$ with the counting measure.
      Let $\big(T_m: m\in\NN_0\big)$ be a family of bounded linear operators
$T_m:L^p(X)\to L^p(X)$.  Moreover, for each $\omega\in[0, 1]$ we
define
\[
T^{\omega}=\sum_{m\in\NN_0}\varepsilon_m(\omega)T_m,
\]
where $\big(\varepsilon_m: m\in\NN_0\big)$ is the sequence of
 Rademacher functions on  $[0,1]$.
\begin{lemma}
  \label{lem:30}
  Suppose that for every $p\in(0, \infty)$ there is a constant $\mathbf C_p>0$ such that
for all $\omega\in[0, 1]$ and $f\in L^p(X)$ we have
\begin{align}
  \label{eq:209}
  \big\|T^{\omega}f\big\|_{L^p}\le \mathbf C_p\|f\|_{L^p}.
\end{align}
Then there is a constant  $C>0$ such that  for every sequence $\big(f_t:
t\in\NN_0\big)\in L^p\big(\ell^2(X)\big)$ we have
\begin{align}
\label{eq:210}
  \Big\|\big(\sum_{t\in\NN_0}\sum_{m\in\NN_0}|T_mf_t|^2\big)^{1/2}\Big\|_{L^p}\le
  C\mathbf C_p
  \Big\|\big(\sum_{t\in\NN_0}|f_t|^2\big)^{1/2}\Big\|_{L^p}.
\end{align}
In particular, if  $T_m\equiv0$ for each $m\in\NN$ then \eqref{eq:210}
implies  the Marcinkiewicz--Zygmund result
\begin{align}
\label{eq:69}
  \Big\|\big(\sum_{t\in\NN_0}|T_0f_t|^2\big)^{1/2}\Big\|_{L^p}\le
  C\mathbf C_{p}
  \Big\|\big(\sum_{t\in\NN_0}|f_t|^2\big)^{1/2}\Big\|_{L^p}.
\end{align}
Summation over $\NN_0$ in the inner sum  of \eqref{eq:210} can be
replaced by any other countable set and the result  remains valid.
\end{lemma}
\begin{proof}
  Let us define
\[
F_{\omega'}=\sum_{t\in\NN_0}\varepsilon_t(\omega')f_t.
\]
Then by the several variable variant of Khinchine's inequality \cite[Appendix D.3]{sings} we obtain
 \begin{align*}
   \Big\|\big(\sum_{t\in\NN_0}\sum_{m\in\NN_0}|T_mf_t|^2\big)^{1/2}\Big\|_{L^p}^p
	\lesssim
	\int_0^1\int_0^1\big\|T^{\omega}(F_{\omega'})\big\|_{L^p}^p
	{\:\rm d}\omega{\:\rm d}\omega'.
 \end{align*}
By \eqref{eq:209}, for each $\omega'\in[0, 1]$,
\begin{align*}
  \big\|T^{\omega}(F_{\omega'})\big\|_{L^p}\lesssim \|F_{\omega'}\|_{L^p},
\end{align*}
thus
\[
	\Big\|\big(\sum_{t\in\NN_0}\sum_{m\in\NN_0}|T_mf_t|^2\big)^{1/2}\Big\|_{L^p}^p
    \lesssim
	\int_0^1\|F_{\omega'}\|_{L^p}^p
    {\:\rm d}\omega',
\]
and since by another application of Khinchine's inequality
\begin{align*}
  \int_0^1\|F_{\omega'}\|_{L^p}^p{\: \rm d}\omega'\lesssim 
  \Big\|\big(\sum_{t\in\NN_0}|f_t|^2\big)^{1/2}\Big\|_{L^p}^p,
\end{align*}
the proof of the lemma is complete. 
\end{proof}

\subsection{Lifting lemma}
\label{sec:5}
Let $\calP=(\calP_1,\ldots, \calP_{d_0}): \ZZ^k \rightarrow \ZZ^{d_0}$
be a polynomial mapping whose
components $\calP_j$ are  polynomials on $\ZZ^k$ with integer coefficients such that $\calP_j(0) = 0$. We set
$$
\deg \calP = \max\{ \deg \calP_j : 1 \leq j \leq d_0\}.
$$
It is convenient to work with the set
$$
\Gamma =
\big\{
	\gamma \in \ZZ^k \setminus\{0\} : 0 \leq |\gamma| \leq \deg \calP
\big\}
$$
with the lexicographic order. Observe that for every $j\in\{1, \ldots, d_0\}$ there are coefficients
$(c_j^\gamma: \gamma\in \Gamma )\subset \ZZ$ such that each $\calP_j$ can be expressed as
$$
\calP_j(x) = \sum_{\gamma \in \Gamma} c_j^\gamma x^\gamma.
$$
 Let us denote by $d$ the cardinality of the set $\Gamma$.
We identify $\RR^d$ with the space of all vectors whose coordinates are labeled by multi-indices
$\gamma \in \Gamma$. Let $A$ be a diagonal $d \times d$ matrix such that
\begin{equation}
	\label{eq:25}
	(A v)_\gamma = \abs{\gamma} v_\gamma.
\end{equation}
For $t > 0$ we set
$$
t^{A}=\exp(A\log t),
$$
i.e., $t^A x=\big(t^{|\gamma|}x_{\gamma}: \gamma\in \Gamma\big)$ for
every $x\in\RR^d$. Next, we introduce the \emph{canonical} polynomial mapping
$$
\calQ = \big(\seq{\calQ_\gamma}{\gamma \in \Gamma}\big) : \ZZ^k \rightarrow \ZZ^d
$$
where $\calQ_\gamma(x) = x^\gamma$ and $x^\gamma=x_1^{\gamma_1}\cdot\ldots\cdot x_k^{\gamma_k}$.
The coefficients $\big(\seq{c_j^\gamma}{\gamma \in \Gamma, j \in \{1, \ldots, d_0\}}\big)$ define
a linear transformation $L: \RR^d \rightarrow \RR^{d_0}$ such that $L\calQ = \calP$. Indeed, it suffices to set
$$
(L v)_j = \sum_{\gamma \in \Gamma} c_j^\gamma v_\gamma
$$
for each $j \in \{1, \ldots, d_0\}$ and $v \in \RR^d$. 

For the future reference Lemma \ref{lem:1} is stated in a more general form than needed. To do so,
let us denote by $\calN$ a seminorm defined on sequences of complex numbers, that is a non-negative
function such that for any two sequences $\big(\seq{a_j}{j \in J}\big)$ and $\big(\seq{b_j}{j \in J}\big)$
where $J \subseteq \ZZ$, satisfies
\[
	\calN\big(\seq{a_j + b_j}{j \in J}\big) 
	\leq 
	\calN\big(\seq{a_j}{j \in J}\big) 
	+ \calN\big(\seq{b_j}{j \in J}\big),
\]
and for any $\lambda \in \CC$,
\[
	\calN\big(\seq{\lambda a_j}{j \in J}\big) = \abs{\lambda} \calN\big(\seq{a_j}{j \in J}\big).
\]
We also assume that
\[
  \calN\big(\seq{a_j}{j \in J}\big)\le \Big(\sum_{j\in J}|a_j|^2\Big)^{1/2},
\]
and 
\[
\calN\big(\seq{a_j}{j \in J_1}\big)\le \calN\big(\seq{a_j}{j \in J_2}\big).
\]
for any $J_1\subseteq J_2\subseteq \ZZ$. For the first reading one may think that
\[
	\calN\big(\seq{a_j}{j \in J}\big) = \sup_{j \in J} \abs{a_j}.
\]
The next lemma, inspired by the continuous analogue (see \cite{deL} or \cite[Chapter 11]{bigs}), where the latter reduces
the proof of Theorem \ref{thm:0} to the canonical polynomial mapping.
\begin{lemma}
	\label{lem:1} Let $R_N^{\calP}$ be either $M_N^{\calP}$ or $T_N^{\calP}$.
	Suppose that for some $p, r \in (0, \infty]$ there is a
        constant $C=C_{p, r} > 0$ such that
	\begin{equation}
		\label{eq:162}
		\Big\lVert\Big(\sum_{t\in\NN}
		\mathcal N\big(R_N^\calQ f_t: N \in \NN\big)^r\Big)^{1/r}
		\Big\rVert_{\ell^p(\ZZ^d)}
		\leq C\Big\|\Big(\sum_{t\in\NN}|f_t|^r\Big)^{1/r}\Big\|_{\ell^p(\ZZ^d)}.
	\end{equation}
	Then
	\begin{equation}
		\label{eq:32}
		\Big\lVert\Big(\sum_{t\in\NN}
		\mathcal N\big(R_N^\calP f_t: N \in \NN\big)^r\Big)^{1/r}
		\Big\rVert_{\ell^p(\ZZ^{d_0})}
		\leq C\Big\|\Big(\sum_{t\in\NN}|f_t|^r\Big)^{1/r}\Big\|_{\ell^p(\ZZ^{d_0})}.
	\end{equation}
\end{lemma}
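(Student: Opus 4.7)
The plan is to exploit the factorization $\calP = L \circ \calQ$ by decomposing $\ZZ^{d_0}$ into cosets of the sublattice $L\ZZ^d$ and lifting the problem on each coset to one on $\ZZ^d$, where the hypothesis \eqref{eq:162} applies. For each $z \in \ZZ^{d_0}$ and each $t \in \mathcal Z$, I would define the lifted sequence $F_{z,t}: \ZZ^d \to \CC$ by $F_{z,t}(v) = f_t(z + Lv)$. Substituting $\calP(y) = L\calQ(y)$ directly into the definition of $R_N^{\calP}$ yields the pointwise identity
\[
R_N^{\calP} f_t(z + L v_0) = R_N^{\calQ} F_{z,t}(v_0),
\qquad z \in \ZZ^{d_0},\; v_0 \in \ZZ^d,
\]
which holds uniformly for both $M_N^{\calP}$ and $T_N^{\calP}$ since these operators depend on $f_t$ only through translations, and which is preserved coordinate-wise after applying $\calN$ in $N$ and the $\ell^r$ sum in $t$.

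Assuming first that $L : \ZZ^d \to \ZZ^{d_0}$ is injective, I would pick a set $Z \subset \ZZ^{d_0}$ of coset representatives of $L\ZZ^d$, so that $\ZZ^{d_0} = \bigsqcup_{z \in Z}(z + L\ZZ^d)$ and each map $v \mapsto z + Lv$ is a bijection onto its coset. The pointwise identity then gives, for each $z$,
\[
\sum_{x \in z + L\ZZ^d} \Big(\sum_t \calN(R_N^{\calP} f_t(x): N \in \NN)^r\Big)^{p/r}
=
\sum_{v \in \ZZ^d} \Big(\sum_t \calN(R_N^{\calQ} F_{z,t}(v): N \in \NN)^r\Big)^{p/r},
\]
together with the analogous equality between $\sum_{x \in z + L\ZZ^d}(\sum_t |f_t(x)|^r)^{p/r}$ and the $\ell^p(\ZZ^d)$ norm of $(F_{z,t})_{t \in \mathcal Z}$. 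Applying \eqref{eq:162} to $(F_{z,t})_{t \in \mathcal Z}$ bounds the former by $C^p$ times the latter, and summing over $z \in Z$ reassembles \eqref{eq:32} with the same constant $C$.

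The main obstacle is the case in which $L$ fails to be injective, which occurs whenever the coefficient vectors $\{(c_j^\gamma)_{j=1}^{d_0}: \gamma \in \Gamma\}$ are linearly dependent in $\RR^{d_0}$; then the lifts $F_{z,t}$ are periodic with respect to the nontrivial lattice $\ker L \cap \ZZ^d$ and are not in $\ell^p(\ZZ^d)$, so \eqref{eq:162} cannot be invoked directly. To handle this I plan to pass to the augmented polynomial mapping $\tilde\calP = (\calP, \calQ) : \ZZ^k \to \ZZ^{d_0 + d}$, for which the associated linear map $\tilde L(v) = (Lv, v)$ is manifestly injective, and to apply the preceding argument to $\tilde\calP$. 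The bound for $R_N^{\calP}$ would then be recovered from the one for $R_N^{\tilde\calP}$ via the identity $m_N^{\calP}(\xi) = m_N^{\tilde\calP}(\xi, 0)$ on Fourier multipliers, together with the standard de Leeuw-type transference principle which guarantees that composing a bounded Fourier multiplier on $\ZZ^{d_0 + d}$ with the subgroup embedding $\xi \mapsto (\xi, 0)$ from $\TT^{d_0}$ into $\TT^{d_0 + d}$ yields a bounded Fourier multiplier on $\ZZ^{d_0}$ with no larger norm, applied coordinate-wise in $N$ and $t$ so as to be compatible with the seminorm $\calN$ and the $\ell^r$ sum in \eqref{eq:162}.
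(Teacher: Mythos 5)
Your plan is in principle correct, and the injective case works exactly as you describe: the pointwise identity $R_N^{\calP}f_t(z+Lv_0)=R_N^{\calQ}F_{z,t}(v_0)$ holds with no truncation, the lifts $F_{z,t}$ lie honestly in $\ell^p\big(\ell^r_{\mathcal Z}(\ZZ^d)\big)$ because $v\mapsto z+Lv$ is a bijection onto the coset, and summing over $z\in Z$ reassembles the $\ell^p(\ZZ^{d_0})$ norm. In that special case your argument is even cleaner than the paper's.

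The gap is in the generic non-injective case. You invoke a ``standard de Leeuw-type transference'' for restriction of multipliers from $\ZZ^{d_0+d}$ to $\ZZ^{d_0}$ and assert it can be ``applied coordinate-wise in $N$ and $t$.'' But de Leeuw restriction is a statement about a single multiplier's $\ell^p\to\ell^p$ norm; applied coordinate-wise in $N$ it would only give a uniform bound for each individual $R_N^\calP$, which is far weaker than the bound \eqref{eq:32} on the seminorm $\calN$ of the whole family, and the $\ell^r$ sum in $t$ does not come along formally either. The transference \emph{is} true in the maximal/vector-valued form you need, but its proof is not a formal reduction: one must extend $f_t$ to $g_{t,R}(x,v)=f_t(x)\ind{|v|_\infty\le R}$, use that for $N\le\Lambda$ the kernel of $R_N^{\tilde\calP}$ has $v$-support inside a box of side $\calO\big(\Lambda^{kN_0}\big)$ so that $R_N^{\tilde\calP}g_{t,R}(x,v)=R_N^{\calP}f_t(x)$ on the bulk $|v|_\infty\le R-\Lambda^{kN_0}$, compare $\ell^p$ masses, send $R\to\infty$ and then $\Lambda\to\infty$. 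That is precisely the truncation-and-averaging mechanism the paper uses.

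The paper's proof applies that mechanism directly to $\calP$, bypassing the injectivity dichotomy entirely: it lifts $f_t$ to the \emph{truncated} $F^x_t(z)=f_t(x+Lz)\ind{|z|_\infty\le M+\Lambda^{kN_0}}$ (which is finitely supported on $\ZZ^d$ regardless of $\ker L$), averages over $x\in\ZZ^{d_0}$ and $|u|_\infty\le M$, applies \eqref{eq:162} to the $F^x_t$, and lets $M\to\infty$ and then $\Lambda\to\infty$. This one argument subsumes both of your cases and requires neither the auxiliary mapping $\tilde\calP$ nor the coset decomposition. Once you supply the averaging needed to make your transference step rigorous, your route and the paper's coincide in substance; yours simply takes the longer path.
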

\begin{proof}
	Let $M > 0$ and $\Lambda > 0$ be fixed. In the proof we
	let $x \in \ZZ^{d_0}$, $y \in \ZZ^k$ and $u \in \ZZ^d$. For any $x \in \ZZ^{d_0}$ we define a
	function $F^x_t$ on $\ZZ^d$ by
	$$
	F^x_t(z) =
	\begin{cases}
		f_t(x + L(z)) & \text{ if } \norm{z}_{\infty} \leq M + \Lambda^{k N_0},\\
		0 & \text{ otherwise.}
	\end{cases}
	$$
	If $\norm{y}_{\infty} \leq N\le \Lambda$ and $\norm{u}_{\infty} \leq M$ then
	$\norm{u - \calQ(y)}_{\infty} \leq M + \Lambda^{kN_0}$.
	Therefore, for each $x \in \ZZ^{d_0}$
	$$
	R_N^\calP f_t(x+L u)
	=
	\sum_{y \in \ZZ_N^k\setminus\{0\}} f_t\big(x + L\big(u - \calQ(y)\big)\big)H_N(y)
	=
	R_N^\calQ F^x_t(u),
	$$
    where 
	\[
		H_N(y)=
		\begin{cases}
			N^{-k}\ind{[1, N]^k}(y) & \text{ if } R_N^\calP=M_N^\calP, \\
			\ind{[-N, N]^k\setminus\{0\}}(y)K(y) & \text{ if } R_N^\calP=T_N^\calP.
		\end{cases}
	\]
	Hence,
	\begin{align*}
		& \Big\lVert\Big(\sum_{t\in\NN}
		\mathcal N\big(R_N^\calP f_t: N \in [1, \Lambda]\big)^r\Big)^{1/r}
		\Big\rVert_{\ell^p(\ZZ^{d_0})}^p \\
		& \qquad\qquad =
		\frac{1}{(2M+1)^{d}}
		\sum_{x \in \ZZ^{d_0}}
		\sum_{\norm{u}_\infty \leq M}
\Big(\sum_{t\in\NN}
		\mathcal N\big(R_N^\calP f_t(x+Lu): N \in [1, \Lambda]\big)^r\Big)^{p/r}\\
		& \qquad \qquad=
		\frac{1}{(2M+1)^{d}}
		\sum_{x \in \ZZ^{d_0}}
		\sum_{\norm{u}_\infty \leq M}
\Big(\sum_{t\in\NN}
		\mathcal N\big(R_N^\calQ F^x_t(u): N \in [1, \Lambda]\big)^r\Big)^{p/r}\\
		& \qquad \qquad \leq
		\frac{C^p}{(2M+1)^d}
		\sum_{x \in \ZZ^{d_0}}
		\sum_{u \in \ZZ^d}\Big(\sum_{t\in\NN}
		 |F^x_t(u)|^r\Big)^{p/r},
	\end{align*}
	where in the last inequality we have used \eqref{eq:162}. Since
    \begin{align*}
		\sum_{x \in \ZZ^{d_0}}
		\sum_{u \in \ZZ^d}\Big(\sum_{t\in\NN}
		 |F^x_t(u)|^r\Big)^{p/r}
		& =
		\sum_{x \in \ZZ^{d_0}} \sum_{\norm{u}_\infty \leq M + \Lambda^{kN_0}}
		\Big(\sum_{t\in\NN}
		 |f_t(x + L u)|^r\Big)^{p/r}\\
		& \leq \big(2M + 2\Lambda^{kN_0}+1\big)^d 
		\Big\|\Big(\sum_{t\in\NN}|f_t|^r\Big)^{1/r}\Big\|_{\ell^p(\ZZ^{d_0})}^p,
    \end{align*}
	we get
	\[
		\Big\lVert\Big(\sum_{t\in\NN}
		\mathcal N\big(R_N^\calP f_t: N \in [1, \Lambda]\big)^r\Big)^{1/r}
		\Big\rVert_{\ell^p(\ZZ^{d_0})}^p\leq
	C^p
	\bigg(1 + \frac{\Lambda^{kN_0}}{M}\bigg)^d
	\Big\|\Big(\sum_{t\in\NN}|f_t|^r\Big)^{1/r}\Big\|_{\ell^p(\ZZ^{d_0})}^p.
	\]
	Taking $M\to\infty$, we conclude that
	$$
	\Big\lVert\Big(\sum_{t\in\NN}
	\mathcal N\big(R_N^\calP f_t: N \in [1, \Lambda]\big)^r\Big)^{1/r}
	\Big\rVert_{\ell^p(\ZZ^{d_0})}^p
	\leq
	C^p
	\Big\|\Big(\sum_{t\in\NN}|f_t|^r\Big)^{1/r}\Big\|_{\ell^p(\ZZ^{d_0})}^p,
	$$
	which by the monotone convergence theorem implies \eqref{eq:32}.
\end{proof}
In the rest of the article $M_N = M_N^\calQ$ and $T_N = T_N^\calQ$ denote the operators defined for the canonical
polynomial mapping $\calQ$.

\subsection{Basic numerical inequality}
Here we remind the reader of the following simple observation which is
essential in the sequel, see Section \ref{sec:4} and Section
\ref{sec:7}.
\begin{lemma}
	\label{lem:6}
	For any sequence $(\seq{a_j}{0 \leq j \leq 2^s})$ of complex numbers we have
	\begin{equation}
		\label{eq:21}
		\max_{0 \leq j \leq 2^s} \abs{a_j}
		\leq
		\abs{a_{j_0}}
		+
		\sqrt{2}
		\sum_{i = 0}^s
		\Big(
		\sum_{j = 0}^{2^{s-i}-1}
		\abs{a_{(j+1)2^i} - a_{j 2^i}}^2
		\Big)^{1/2}
	\end{equation}
	for any $j_0 \in \{0, 1, \ldots, 2^s\}$.
\end{lemma}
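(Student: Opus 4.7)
The plan is to bound, for any fixed target $n \in \{0, 1, \ldots, 2^s\}$, the increment $|a_n - a_{j_0}|$ via a telescoping sum along a carefully chosen dyadic path, and then take the maximum in $n$. Since the triangle inequality gives $|a_n| \le |a_{j_0}| + |a_n - a_{j_0}|$, it suffices to establish
\[
	|a_n - a_{j_0}| \le \sqrt{2} \sum_{i=0}^s \Big(\sum_{j=0}^{2^{s-i}-1} |a_{(j+1) 2^i} - a_{j 2^i}|^2\Big)^{1/2}
\]
uniformly in $n$.

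The key combinatorial input is that, assuming $j_0 \le n$ (the opposite case being symmetric), the integer interval $[j_0, n]$ admits a decomposition into contiguous dyadic subintervals of the form $[k \cdot 2^i, (k+1) \cdot 2^i]$ --- say with breakpoints $j_0 = q_0 < q_1 < \cdots < q_L = n$ --- in which \emph{at most two} of the resulting pieces $[q_{l-1}, q_l]$ have any given length $2^i$. This is the standard maximal dyadic decomposition of an interval: greedily extract the longest dyadic subinterval of what remains. The two-per-scale bound holds because three consecutive dyadic intervals of the same scale would, by parity of their left endpoints, contain two aligned ones that could be merged into a single interval of double the scale, contradicting maximality.

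Armed with this decomposition, I would write
\[
	a_n - a_{j_0} = \sum_{l=1}^L \big(a_{q_l} - a_{q_{l-1}}\big) = \sum_{i=0}^s D_i,
\]
where $D_i$ collects those telescoping differences for which $q_l - q_{l-1} = 2^i$. Each such $D_i$ is a sum of at most two terms, each of the form $a_{(j+1) 2^i} - a_{j \cdot 2^i}$ with $0 \le j \le 2^{s-i} - 1$, and Cauchy--Schwarz applied to a sum of at most two summands gives
\[
	|D_i| \le \sqrt{2} \Big(\sum_{j=0}^{2^{s-i}-1} |a_{(j+1) 2^i} - a_{j 2^i}|^2\Big)^{1/2}.
\]
Summing over $i$ and invoking the triangle inequality yields \eqref{eq:21}. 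The only mild obstacle is verifying the dyadic decomposition claim, which is pure tree-combinatorics on the complete binary tree of depth $s$ indexing $\{0, 1, \ldots, 2^s\}$; no analytic difficulty arises beyond the single application of Cauchy--Schwarz, and the precise factor $\sqrt{2}$ reflects the fact that two pieces at the same scale can genuinely occur (as in, e.g., $[j_0, n] = [1, 7]$ with $s = 3$).
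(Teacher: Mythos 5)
Your proposal follows essentially the same route as the paper: telescope $a_n - a_{j_0}$ along the greedy dyadic decomposition of $[j_0,n]$, use that each scale contributes at most two pieces, and apply Cauchy--Schwarz on those two terms to produce the $\sqrt{2}$. The one place your sketch is thinner than the paper's argument is the ``at most two per scale'' claim: ruling out \emph{three consecutive} same-scale pieces does not by itself forbid three same-scale pieces appearing non-consecutively (e.g.\ lengths $1,2,2,1$ show non-consecutive repeats do occur), so one also needs the structural fact that the greedy lengths first increase and then, after at most one tie, strictly decrease; the paper proves exactly this monotonicity, from which ``at most two per scale'' follows cleanly.
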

A variational variant of this inequality was proven by Lewko--Lewko \cite[Lemma 13]{ll} in the context of
variational Rademacher--Menshov theorems. It was also obtained independently by the first two authors in
\cite{mt3} in the context of variational estimates for discrete Radon transforms, see also \cite{mst2}.

\subsection{Sampling principle}
Let $\calF$ denote the Fourier transform on $\RR^d$ defined for any 
$f \in L^1\big(\RR^d\big)$ as
$$
\calF f(\xi) = \int_{\RR^d} f(x) e^{2\pi i \sprod{\xi}{x}} {\: \rm d}x, \quad \text{for} \quad \xi\in\RR^d.
$$
If $f \in \ell^1\big(\ZZ^d\big)$, we set
$$
\hat{f}(\xi) = \sum_{x \in \ZZ^d} f(x) e^{2\pi i \sprod{\xi}{x}}, \quad \text{for} \quad \xi\in\TT^d.
$$
To simplify notation we denote by $\mathcal F^{-1}$ the inverse Fourier transform on $\RR^d$
or the inverse Fourier transform on the torus $\TT^d$ (Fourier coefficients), depending on the context.

Let $\big(\seq{\Theta_N}{N \in \NN}\big)$ be a sequence of multipliers on $\RR^d$
with the property that for each $p \in (1, \infty)$ there is a constant $\boldB_{p} > 0$
such that for any $\big(f_t: t\in\ZZ\big) \in
L^p\big(\ell^2\big(\RR^d\big)\big) \cap L^2\big(\ell^2\big(\RR^d\big)\big)$ we have
\begin{equation}
	\label{eq:54}
	\Big\lVert\Big(\sum_{t\in\ZZ}
	\calN\big(\seq{\calF^{-1}\big(\Theta_N \calF f_t \big)}{N \in
          \NN}\big)^2\Big)^{1/2}
	\Big\rVert_{L^p}
	\leq
	\boldB_{p}
	\Big\|\Big(\sum_{t\in\ZZ}|f_t|^2\Big)^{1/2}\Big\|_{L^p},
\end{equation}
where $\calN$ is a seminorm defined for sequences of complex numbers as in Section \ref{sec:5}.

We state a discrete analogue of \eqref{eq:54}. For this
purpose, let $\eta: \RR^d \rightarrow \RR$ be a smooth function such
that $0 \leq \eta(x) \leq 1$, and
\[
\eta(x) =
\begin{cases}
	1 & \text{ for } \norm{x} \leq 1/(16 d),\\
	0 & \text{ for } \norm{x} \geq 1/(8 d).
\end{cases}
\]
Additionally, we assume that $\eta$ is a convolution of two
non-negative smooth functions $\phi$ and $\psi$ with compact supports
contained inside $(-1/(8d), 1/(8d))^d$.  This is only a technical
assumption which allows us to adopt some arguments from
\cite{mt3}.  From now on, unless otherwise stated, we assume that
every function $f_t: \ZZ^d\to\CC$ is finitely supported.

\begin{proposition}
\label{prop:4}
 Suppose that \eqref{eq:54}
holds for some $p\in (1, \infty)$. Then there is a constant $C > 0$
such that for each $Q\in\NN$ and $m \in \NN_Q^k$ and any 
 diagonal $d\times d$ matrix $R$ with positive entries
$(r_{\gamma}: \gamma\in\Gamma)$ satisfying 
$\inf_{\gamma\in\Gamma}r_{\gamma}\geq 2^{2d+2} Q^{d+1}$ 
 we have
 \begin{align}
   \begin{split}
   \label{eq:1}
		\Big\lVert\Big(\sum_{t\in\ZZ}
		\calN
		\big(\seq{\calF^{-1}\big(\Theta_N\eta(R \: \cdot \:) 
		\hat{f_t}\big)(Q x + m)}{N \in \NN}\big)^2&\Big)^{1/2}
		\Big\rVert_{\ell^p(x)}\\
		\leq
		C \boldB_p
		\Big\lVert\Big(\sum_{t\in\ZZ}
		\big|\calF^{-1}\big(\eta(R \: \cdot \:) \hat{f_t} \big)&(Q x + m)\big|^2\Big)^{1/2}
		\Big\rVert_{\ell^p(x)}.
   \end{split}
                \end{align}
\end{proposition}

Proposition \ref{prop:4} for $r$-variational seminoms in the
scalar-valued case was proven in \cite[Proposition 3.3]{mt3}. The same
methods can be adapted to deduce inequality \eqref{eq:1}, therefore
we omit the proof. See also the discussion of sampling in
\cite[Proposition 2.1 and Corollary 2.5]{MSW}, which provides an earlier approach.

\subsection{Estimates for Ionescu--Wainger type multipliers}
We first introduce necessary notation. Let $\rho>0$. For every $N\in\NN$ we define
\[
	N_0=\lfloor
	N^{\rho/2}\rfloor+1 \qquad \text{and} \qquad Q_0=(N_0!)^D,
\]
where $D=D_{\rho}=\lfloor 2/\rho\rfloor+1$.  Let $\mathbb P$ denote the set
of all prime numbers and $\mathbb P_N=\mathbb P\cap (N_0, N]$. For any
$V\subseteq\mathbb P_N$ and $k\in\NN_D$, let 
\[
\Pi_k(V)=\big\{p_{1}^{\gamma_{1}}\cdot\ldots\cdot
p_{k}^{\gamma_{k}}: \ \gamma_{l}\in\NN_D\  \text{and}\ p_{l}\in
V\ \text{are distinct for all $1\le l\le k$} \big\}.
\]
Then $\Pi_{k_1}(V)\cap \Pi_{k_2}(V)=\emptyset$ whenever $k_1\not=k_2$. Let
\[
\Pi(V)=\bigcup_{k\in\NN_D}\Pi_k(V)
\]
be the set of all products of primes factors from $V$ of
length at most $D$, at powers between $1$ and $D$. 
Next, we introduce the sets
\[
 P_N=\big\{q=Q\cdot w: Q|Q_0\ \text{and}\ w\in
\Pi(\mathbb P_{N})\cup\{1\}\big\}.
\]
It is not difficult to see that:
\begin{itemize}
\item every integer $q\in\NN_N$ can be uniquely
written as $q=Q\cdot w$, where $Q|Q_0$ and $w\in \Pi(\mathbb
P_N)\cup\{1\}$;
\item there is $C_{\rho}>0$ such that for every $N\ge C_{\rho}$ 
 we obtain
\[q=Q\cdot w\le Q_0\cdot w\le (N_0!)^DN^{D^2}\le
e^{N^{\rho}};
\]
\item 
$\NN_N\subseteq P_N\subseteq\NN_{e^{N^{\rho}}}$;
\item   $P_{N_1}\subseteq P_{N_2}$, if
$N_1\le N_2$.
\end{itemize}
For a subset $S\subseteq\NN$ we define 
\[
\mathcal R(S)=\big\{a/q \in \TT^d\cap\QQ^d : a \in A_q \text{ and }
q\in S\big\},
\] 
where for each $q\in\NN$ 
\[
A_q=\big\{a\in\NN_q^d: \gcd\big(q, (a_{\gamma}: \gamma\in\Gamma)\big)=1\big\}.
\]
Finally, for each $N\in\NN$ we set
\begin{align}
  \label{eq:156}
\mathscr{U}_N=\mathcal R(P_N).
\end{align}
If $N_1\le N_2$ then  $\mathscr{U}_{N_1}\subseteq \mathscr{U}_{N_2}$ and we have the estimate
\begin{align}
  \label{eq:2}
  |\mathscr{U}_N|\lesssim e^{(d+1)N^{\rho}}.
\end{align}

We are now in the position to define Ionescu--Wainger type multipliers. Assume that $\Theta$
is a multiplier on $\RR^d$ and for every
$p\in(1, \infty)$ there is a constant $\boldA_p >0 $ such that for
every $f\in L^2\big(\RR^d\big)\cap L^p\big(\RR^d\big)$ we have
\[
  \big\lVert\calF^{-1}\big(\Theta\calF f\big)\big\rVert_{L^p}
  \leq
  \boldA_p \vnorm{f}_{L^p}.
\]
For each $N \in \NN$ and $\xi\in\TT^d$
we define a new periodic  multiplier
\begin{align*}
	\Delta_N(\xi)
	=\sum_{a/q \in\mathscr{U}_N}
	 \Theta(\xi - a/q) \eta_N(\xi - a/q),
\end{align*}
where $\eta_N(\xi)=\eta\big(\mathcal E_N^{-1}\xi\big)$ and $\mathcal E_N$
is a diagonal $d\times d$  matrix with  positive entries
$(\varepsilon_{\gamma}: \gamma\in\Gamma)$ such that
$\varepsilon_{\gamma}
\le e^{-N^{2\rho}}$.
\begin{theorem}
	\label{th:3}
	For every $\rho>0$ and $p\in(1, \infty)$ there is a
    constant $C_{\rho, p} > 0$ such that
	for any $N\in\NN$ and $f \in \ell^p\big(\ZZ^d\big)$ we have
	\begin{align}
          \label{eq:5}
		\big\lVert
		\calF^{-1}\big(\Delta_{N}
		\hat{f}\big)\big\rVert_{\ell^p} 
		\leq C_{\rho, p} \boldA_{2r}(\log N)
		\vnorm{f}_{\ell^p},
	\end{align}
	where $r = \max\big\{\lceil p/2 \rceil, \lceil p'/2 \rceil\big\}$.
\end{theorem}
Theorem \ref{th:3} was established by Ionescu and Wainger in \cite{iw}
with $(\log N)^{2/\rho}$ in place of $\log N$. Replacing \cite[Lemma
3.1]{iw} with \cite[Lemma 2.4]{mir1} and arguing as in \cite{iw} one
obtains inequality \eqref{eq:5}.  For a slightly different approach
we refer to \cite[Section 2]{mir1}. Theorem \ref{th:3} is one of the
key ingredients in all of the steps of our proofs. 

\section{Exponential sums}
\label{sec:3}
This section is devoted to study certain exponential sums.  We first fix some notation. Let $B_r(x_0)$ denote an Euclidean ball in $\RR^k$ centered at $x_0$ with radius $r>0$.
Let $P$ be a polynomial with real coefficients in $\RR^k$ of degree $d \in\NN$ such that
\begin{align}
  \label{eq:12}
	P(x)  = \sum_{\gamma\in\NN_0^k\colon 0 < \norm{\gamma} \leq d} \xi_\gamma x^\gamma,\quad \text{and}\quad P(0)=0.  
\end{align}
Let $\varphi:\RR^k \rightarrow \CC$  be a  function in $\mathcal
C^1\big(\RR^k\big)$ satisfying the following conditions 
\begin{align}
  \label{eq:217}
|\varphi(x)|\le 1, \qquad \text{and} \qquad |\nabla \varphi(x)|\le (1+|x|)^{-1}.  
\end{align}
Our aim is to show Theorem \ref{thm:3}, which is a refinement of \cite[Proposition 3]{SW0}.
\begin{theorem}
	\label{thm:3}
        For every $d, k\in\NN$ and $\alpha>0$ there are
        $\beta_{\alpha}=\beta_{\alpha}(d, k)\ge\alpha$ and $C>0$ such that
        for every $\beta\ge\beta_{\alpha}$, every  $N\ge1$, every polynomial $P$ as in \eqref{eq:12},
         and every convex set
        $\Omega\subseteq B_N(0)$ the following holds. Suppose that for some  multi-index $\gamma_0\in\NN_0^k$
		so that $0 < \norm{\gamma_0} \leq d$, there are  integers $0\le a\le q$ with $(a, q) = 1$, and
        \begin{align}
          \label{eq:15}
        	\Big\lvert
		\xi_{\gamma_0} - \frac{a}{q}
		\Big\rvert
		\leq
		\frac{1}{q^2},  
        \end{align}
                and
		\[
		(\log N)^\beta \leq q \leq N^{\norm{\gamma_0}} (\log N)^{-\beta}.
		\]
        Then 
	\begin{equation}
		\label{eq:56}
		\Big|\sum_{n \in \Omega \cap \ZZ^k} e^{2\pi i P(n)}\varphi(n)\Big|
		\leq
		C
		N^k (\log N)^{-\alpha}.
	\end{equation}
	The implied  constant $C$ may depend on $d, k, \alpha$ and the function $\varphi$ from \eqref{eq:217}, but is independent of $a, q, N$ and the coefficients of $P$ and the sets $\Omega$. 
\end{theorem}

In order to prove Theorem \ref{thm:3} we will argue by the backward
induction on $|\gamma_0|$.  This approach forces us to consider the
summation over convex sets $\Omega$ in the exponential sums.  Even
when one starts with sums defined over a cube, certain changes of
variables are required in the argument.  Then the cube is transformed
under a linear integral map and the best what one can say about
this image is that the resulting set is convex. On the other hand, the exponential sums with
summation over convex sets allow us to consider multipliers corresponding to the discrete averaging Radon operators defined over convex sets, see for instance \cite{mst2}.
We begin with the following simple observation.

\begin{proposition}
  \label{prop:201}
Fix $0\le \sigma\le 1/3$ and assume that $\Omega \subseteq\RR^k$ is a convex set contained in a
ball with radius $r\ge1$. Let $N_{\Omega}=\#\{x\in\Omega\cap\ZZ^k: \dist(x, \partial \Omega)< s\}$.
\begin{enumerate}
\item If $1\le s\le r^{1-3\sigma}$, then
$N_{\Omega}=\mathcal O\big(r^{k-\sigma}\big)$.
\item If $1\le s\le
r$ and $\Omega $ contains a ball
$B_{cr}(x_0')$ for some $x_0'\in\RR^k$ and $c>0$, then $N_{\Omega}=\mathcal O\big(sr^{k-1}\big)$.
\end{enumerate}
\end{proposition}
\begin{proof}
In both cases, we assume that $r$ is large, otherwise the
assertions easily follow.  If $|\Omega|\le r^{k-\sigma}$, then there
is nothing to do since by Davenport's result (see \cite{daven}) we know that
\[
\#\big(\Omega\cap\ZZ^k\big)=|\Omega|+\mathcal O\big(r^{k-1}\big),
\]
where $|\Omega|$ is the volume of $\Omega$. Assume now that
$|\Omega|\ge r^{k-\sigma}$ and we shall show that $N_{\Omega}=\mathcal O\big(sr^{k-1+2\sigma}\big)$,
which gives the desired conclusion if $1\le s\le r^{1-3\sigma}$.  By a simple
integration argument for each $1\le j\le k$ there is a segment
$I_j\subseteq \Omega$ in direction $x_j$, and
$|I_j|\gtrsim\frac{|\Omega|}{r^{k-1}}$. Thus there is a ball $B_{\rho}(x_0')$ 
with radius $\rho=\frac{c|\Omega|}{r^{k-1}}$, for some $c>0$, contained in the
 convex set $\Pi$ generated by all segments $I_1,\ldots, I_k$ with
 $x_0'\in\ZZ^k$ being the closest point to the barycenter of $\Pi$.
It is always possible to choose a ball $B_{\rho}(x_0')$ centered at $x_0'\in\ZZ^k$, by taking $c>0$ very small in the definition of $\rho$, since  $\rho\ge cr^{1-\sigma}$
and we have assumed that $r$ is large. We will also assume that
$x_0'=0$, since  the number of lattice
points is invariant under translations.

   For any
  $x\in\Omega$, let $\bar x\in\partial\Omega$ so that
  $x=\lambda_x\bar x$ with some $\lambda_x\in(0, 1)$. If $\Gamma_{\bar x}$
  is the convex set generated by $\bar x$ and $B_{\rho}(0)$, then the
  angle of the aperture at the vertex $\bar x$ is $\ge\alpha$, with
  some $\alpha\ge\rho/r$ uniformly for $x\in\Omega$. Then
there exists $c'>0$ such that  for all $x\in\Omega$ we get
\[
\dist(x, \partial \Omega)\ge\dist(x, \partial \Gamma_{\bar x})=(1-\lambda_x)|\bar
x|\sin(\alpha/2)\ge c'(1-\lambda_x)\frac{\rho^2}{r}.
\]  

Define $\delta=1-\frac{sr}{c'\rho^2}$ and observe that 
for every  $x\in\Omega_{\delta}=\{y\in\RR^k: \delta^{-1}y\in\Omega\}$
we have
\[
\dist(x, \partial \Omega)\ge c'(1-\lambda_x)\frac{\rho^2}{r}\ge c'(1-\delta)\frac{\rho^2}{r}=s,
\]
which is equivalent to
\[
\{x\in\Omega: \dist(x, \partial \Omega)< s\}\subseteq \Omega\setminus\Omega_{\delta}.
\]
Since $\#\big(\Omega\cap\ZZ^k\big)=|\Omega|+\mathcal
O(r^{k-1})$ and  $\#\big(\Omega_{\delta}\cap\ZZ^k\big)=\delta^k|\Omega|+\mathcal
O(r^{k-1})$ we obtain
\begin{align*}
	N_{\Omega}&=\#\{x\in\Omega\cap\ZZ^k: \dist(x, \partial \Omega)< s\} 
	 \le \#\big(\Omega\cap\ZZ^k\big)-\#\big(\Omega_{\delta}\cap\ZZ^k\big) \\
	 &= \mathcal O\big((1-\delta)r^k\big)+\mathcal O\big(r^{k-1}\big)=\mathcal O\big(sr^{k-1+2\sigma}\big)+\mathcal O\big(r^{k-1}\big).
\end{align*}

For the second part suppose that $B_{cr}(x_0')\subseteq \Omega\subseteq B_r(x_0)$ for
some $x_0, x_0'\in\RR^k$
and $c>0$. Again without  loss of generality we may assume that $x_0'=0$, and $B_{cr}(x_0')$ and
  $\Omega$ are not tangent at any point, otherwise it suffices to take
  $c/2$ instead of $c$. In a similar way as above, for any
  $x\in\Omega$, let $\bar x\in\partial\Omega$ so that
  $x=\lambda_x\bar x$ with some $\lambda_x\in(0, 1)$. We now set $\Gamma_{\bar x}$ to be
  the convex set generated by $\bar x$ and $B_{cr}(0)$, then the
  angle of the aperture at the vertex $\bar x$ is $\ge\alpha$, with
  some $\alpha=\alpha(c)>0$ uniformly for $x\in\Omega$. Then
there exists $c'>0$ such that  for all $x\in\Omega$ we get
\[
\dist(x, \partial \Omega)\ge\dist(x, \partial \Gamma_{\bar x})=(1-\lambda_x)|\bar
x|\sin(\alpha/2)\ge c'r(1-\lambda_x).
\]  
Define $\delta=1-\frac{s}{c'r}$ and observe that 
for every  $x\in\Omega_{\delta}=\{y\in\RR^k: \delta^{-1}y\in\Omega\}$
we have
\[
\dist(x, \partial \Omega)\ge c'r(1-\lambda_x)\ge c'r(1-\delta)=s.
\]
Thus $\{x\in\Omega: \dist(x, \partial \Omega)< s\}\subseteq
\Omega\setminus\Omega_{\delta}$ and  consequently we
conclude $N_{\Omega}=\mathcal O\big(sr^{k-1}\big)$
as desired.
\end{proof}

\begin{remark}
This proposition is needed as a replacement for   \cite[Proposition 9]{SW0}, 
since the latter proposition contains an error. While the present 
version is weaker than
the one in \cite{SW0}, it is sufficiently strong for our purposes, and also to 
establish the Weyl's sum estimates in \cite{SW0}.  
\end{remark}

\begin{lemma} [cf. {\cite[Lemma 1]{SW0}}]
		\label{lem:11}
		Suppose that
		\[
			\Big\lvert
			\theta 
			- 
			\frac{a}{q}
			\Big\rvert
			\leq
			\frac{1}{q^2}
		\]
		and $(\log N)^\beta \leq q \leq N^{l} (\log N)^{-\beta}$ for some $l\in\NN$. Let $Q$ be an integer, $Q \leq (\log N)^{\beta'}$
		with $\beta' < \beta$. If
		\[ 
			\beta_2 \leq \min\{\beta/2, \beta - \beta'\},
		\]
		then there are integers $0\le \tilde{a}\le \tilde{q}$, so that  $(\tilde{a}, \tilde{q}) = 1$ and
		\[
			\Big\lvert
			Q \theta - \frac{\tilde{a}}{\tilde{q}}
			\Big\rvert
			\leq
			\frac{(\log N)^{\beta_2}}{\tilde{q}N^l}
		\]
		with $\frac{1}{2}(\log N)^{\beta_2} \leq \tilde{q} \leq N^{l}(\log N)^{-\beta_2}$.
	\end{lemma}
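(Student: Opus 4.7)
The plan is to apply Dirichlet's approximation theorem directly to $Q\theta$ with parameter $H = N^j(\log N)^{-\beta_2}$: this produces coprime integers $a', q'$ with $1 \le q' \le H$ and $|Q\theta - a'/q'| \le 1/(q'H) = (\log N)^{\beta_2}/(q'N^j)$. The upper bound on $q'$ and the approximation inequality come for free from this choice of $H$. The entire content of the lemma is therefore the lower bound $q' \ge (\log N)^{\beta_2}$, which I will prove by contradiction using the natural approximation of $Q\theta$ inherited from the hypothesis on $\theta$.

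To set up the comparison, let $D = \gcd(Q, q)$ and write $Q = DQ_1$, $q = D\tilde q$ with $(Q_1,\tilde q)=1$. Using $(a,q)=1$ one checks that the fraction $Qa/q = Q_1 a/\tilde q$ is already in lowest terms, and the hypothesis on $\theta$ gives $|Q\theta - Q_1 a/\tilde q| \le Q/q^2$. Since $D \le Q \le (\log N)^{\beta'}$, we have $\tilde q = q/D \ge q/Q \ge (\log N)^{\beta-\beta'}$. If it happens that $a'/q' = Q_1 a/\tilde q$ then $q' = \tilde q \ge (\log N)^{\beta-\beta'} \ge (\log N)^{\beta_2}$ and we are done.

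In the remaining case the two reduced fractions are distinct, so they differ by at least $1/(q'\tilde q)$, and the triangle inequality yields
\[
	\frac{1}{q'\tilde q} \;\le\; \frac{(\log N)^{\beta_2}}{q' N^j} + \frac{Q}{q^2}.
\]
Using $\tilde q \le q \le N^j(\log N)^{-\beta}$ the first term on the right, multiplied by $q'\tilde q$, is at most $(\log N)^{\beta_2-\beta}$, which tends to $0$ because $\beta_2 \le \beta/2 < \beta$. For $N$ sufficiently large this term is absorbed into $1/(2q'\tilde q)$, leaving $q^2 \le 2Qq'\tilde q \le 2Qq' q$, i.e.\ $q \le 2Qq'$. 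Under the contradiction hypothesis $q' < (\log N)^{\beta_2}$ combined with $Q \le (\log N)^{\beta'}$ and $\beta_2 \le \beta - \beta'$ this gives $q < 2(\log N)^{\beta'+\beta_2} \le 2(\log N)^\beta$, contradicting $q \ge (\log N)^\beta$ for all sufficiently large $N$.

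The main obstacle is the tightness of the logarithmic bookkeeping: the contradiction in the final step only has slack of a constant factor, so one has to invoke the ``except for finitely many $N$'' convention already used in the proof of Theorem \ref{thm:3}, or equivalently allow an arbitrarily small loss in the constraint on $\beta_2$. Everything else is mechanical: Dirichlet's pigeonhole, the standard separation bound $|p_1/q_1 - p_2/q_2| \ge 1/(q_1q_2)$ for distinct rationals, and the coprimality calculation that identifies $Q_1 a/\tilde q$ as the reduction of $Qa/q$.
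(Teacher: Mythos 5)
Your proof is correct and follows essentially the same route as the paper: apply Dirichlet's theorem to $Q\theta$ with parameter $N^j(\log N)^{-\beta_2}$, then split on whether the resulting $a'/q'$ equals the reduction of $Qa/q$, handling the distinct case via the triangle inequality and the separation bound for distinct rationals. The only differences are cosmetic — you work explicitly with the reduced fraction $Q_1 a/\tilde q$ (whereas the paper uses $Qa/q$ directly, which is harmless since $\lvert a'/q' - Qa/q\rvert \geq 1/(qq')$ already holds whenever the fractions differ) and you phrase the second case as a contradiction rather than a direct estimate — and both arguments carry the same harmless factor-of-two slack in the final bound, which you acknowledge explicitly while the paper absorbs it silently.
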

	\begin{proof}
        We apply Dirichlet's theorem to $Q \theta$ and obtain
                integers $0\le \tilde{a}\le \tilde{q}$, so that  $(\tilde{a}, \tilde{q}) = 1$, with
		\begin{equation}
			\label{eq:182}
			\Big\lvert 
			Q \theta - \frac{\tilde{a}}{\tilde{q}}
			\Big\rvert
			\leq
			\frac{(\log N)^{\beta_2}}{\tilde{q}N^{l}}  
		\end{equation}
		and $\tilde{q} \leq N^{l} (\log N)^{-\beta_2}$. We must see
                that $(\log N)^{\beta_2} \leq \tilde{q}$. There are two
                cases. First, $\tilde{a}/\tilde{q} = Q a/q$, then $\tilde{q} \geq q/Q \geq (\log N)^{\beta - \beta'} \geq (\log N)^{\beta_2}$, and we
		are done. Second, we suppose $\tilde{a}/\tilde{q} \neq Q a /q$. Then
		\[
			\frac{1}{q \tilde{q}} \leq 
			\Big\lvert
			\frac{\tilde{a}}{\tilde{q}} - \frac{Qa}{q}
			\Big\rvert
			\leq
			\Big\lvert
			Q \theta - \frac{\tilde{a}}{\tilde{q}}
			\Big\rvert
			+
			Q
			\Big\lvert
			\theta - \frac{a}{q}
			\Big\rvert.
		\]
		But $\abs{Q \theta - \tilde{a}/\tilde{q}} \leq N^{-l} (\log N)^{\beta_2}$, and $\abs{\theta - a/q} \leq q^{-2}$, so
		by \eqref{eq:182}
		\[
			\frac{1}{q \tilde{q}} \leq
			N^{-l} (\log N)^{\beta_2} + Q q^{-2}.
		\]
		That is
		\[
                \frac{1}{\tilde{q}} \leq q N^{-l
                } (\log N)^{\beta_2} + Q q^{-1}
			\leq (\log N)^{\beta_2 - \beta} + (\log N)^{\beta' - \beta},
		\]
		since, $(\log N)^\beta \leq q \leq N^{l} (\log N)^{-\beta}$, and $Q \leq (\log N)^{\beta'}$. Thus
		\[
			\frac{1}{\tilde{q}} \leq 2(\log N)^{-\beta_2}.
		\]
		This proves the lemma.
	\end{proof}

\begin{proof}[Proof of Theorem \ref{thm:3}]
Let us denote
  \[
S_N=\sum_{n \in \Omega \cap \ZZ^k} e^{2\pi i P(n)}\varphi(n).
\]
In what follows $C, C', c, c', \ldots$ are constants that appear below. Their value
  are adjusted several times, (but finitely many times) depending
  on the backward induction we use.  The constants $C, C', c, c',
  \ldots$, and the constants implicit  in the $\calO$ notation, may depend on
  $\alpha$, the dimension $k$, the degree $d$ of the
  polynomial $P$, and the function $\varphi$ obeying \eqref{eq:217}, but are independent of $N$ and the coefficients of $P$ and the set $\Omega$.

  In the proof we appeal to a variant of Weyl's inequality with a
  logarithmic loss which can be found in \cite[Section 5]{va} or \cite[the remark after Theorem 1.5]{w0}. 
  \begin{theorem}[Weyl's inequality]
  \label{thm:weyl}
  Let $P$ be a polynomial of degree $d\ge2$ on the real line as in \eqref{eq:12}. Let $R\ge1$ and suppose that there are integers $0\le a\le
q\le R^j$ with $(a, q)=1$, such that
$|\xi_j-a/q|\le q^{-2}$ for some  $2\le j\le d$. Then
\begin{align}
  \label{eq:13}
\Big|\sum_{n = 1}^{R} e^{2\pi i P(n)}\Big|\lesssim R\log R\bigg(\frac 1
q+\frac 1 {R}+\frac{q}{R^{j}}\bigg)^{\varepsilon_d},  
\end{align}
where $\varepsilon_d=\frac{1}{2d^2-2d+1}$. 	
\end{theorem}
 
The proof proceeds in four steps. In the first step we show that inequality \eqref{eq:13} establishes
Theorem \ref{thm:3} for $k=1$ and $|\gamma_0|\ge2$. However, in the second step, we obtain the
less precise result for $k=1$ and  $|\gamma_0|\ge1$ as a consequence of the first step.
In the last two steps we establish Theorem \ref{thm:3} in the full generality.

\noindent {\bf Step 1.} Suppose that $k = 1$, $\varphi\equiv1$ and $|\gamma_0|=j$, where $1\le j\le d$. We
shall prove that for
any $\alpha>0$ there is $\beta_{\alpha}\ge\alpha+1$ such that for every
$\beta\ge\beta_{\alpha}$, if
$(\log N)^{\beta}\le q\le N^{j}(\log N)^{-\beta}$ then
        \begin{align}
          \label{eq:218}
        	\sup_{1\le R\le N}|S_{R}'|\lesssim
		N (\log N)^{-(\alpha+1)},
        \end{align}
 where 
\[
S_R'=		\sum_{n = 1}^{R} e^{2\pi i P(n)}.
\]
Once \eqref{eq:218} is proven we easily obtain \eqref{eq:56} for $k=1$ with
a general function $\varphi$ satisfying
\eqref{eq:217}. Indeed, summing by parts we obtain
\[
S_R=\sum_{n=1}^Re^{2\pi i
  P(n)}\varphi(n)=\sum_{n=1}^R\big(\varphi(n)-\varphi(n+1)\big)S_n'+\mathcal
O\big(|S_R'|\big).
\]
Then inequality \eqref{eq:218} combined with \eqref{eq:217} yields \eqref{eq:56} for $k=1$. The reason why we have taken $\alpha+1$ in \eqref{eq:218} instead of $\alpha$ is purely technical. It was required  to compensate the $\log N$ loss produced by the summation by parts due to \eqref{eq:217}.

It now suffices to establish inequality \eqref{eq:218}. Observe that for $R \le N (\log N)^{-(\alpha+1)}$, the estimate
\eqref{eq:218} holds trivially. Suppose that $N (\log N)^{-(\alpha+1)} \leq R \leq N$. 

For $d=j=1$ and any $\beta\ge\alpha+1$, if $(\log N)^{\beta}\le q\le N(\log N)^{-\beta}$ then we immediately see that
\[
|S_R'|=\Big|\sum_{n = 1}^{R} e^{2\pi i \xi_1n}\Big|\lesssim\|\xi_1\|^{-1}\lesssim q\lesssim N (\log N)^{-(\alpha+1)},
\]
since by \eqref{eq:15} we get
$\|\xi_1\|\ge 1/q-|\xi_1-a/q|\ge 1/q-1/q^2\gtrsim 1/q$, where
$\|\xi\|=\dist(\xi, \ZZ)$.  

For $2\le j\le d$  the desired bound follows by invoking inequality \eqref{eq:13}. Indeed, 
 for any $\beta\ge (\alpha+1)\varepsilon_d^{-1}+d(\alpha +1)$, if $(\log N)^{\beta}\le q\le N^j(\log N)^{-\beta}$,
then we conclude that $q\le R^j$, and \eqref{eq:13} guarantees  
\[
|S_R'
|\lesssim R(\log R)^{-(\beta-j(\alpha+1))\varepsilon_d}\lesssim_{\alpha} N(\log N)^{-(\alpha+1)}.
\]
Finally, we see that
$\beta_{\alpha}=(\alpha+1)(2d^2-d+1)\ge(\alpha+1)$
will work. 
It now remains to prove inequality \eqref{eq:218} for $j=1$ and $d\ge2$. This will be accomplished 
in the next step.

\noindent {\bf Step 2.} We assume that $k = 1$, $\varphi\equiv1$,
$|\gamma_0|=j=1$ and $d\ge2$.  We write
$P(x) = \xi_d x^d + \xi_{d-1} x^{d-1} +\ldots+\xi_{2}x^2+\xi_1x$, and
we assume that
	\[
      	\Big\lvert
		\xi_1 - \frac{a}{q}
		\Big\rvert
		\leq
		\frac{1}{q^2},
	\]
	for some integers $a, q$ such that $0\le a \le q$ and 
	$(a, q) = 1$, with
	\[
		(\log N)^\beta \leq q \leq N (\log N)^{-\beta}.
	\]
	We shall prove
	\begin{equation}
		\label{eq:60}
		\sup_{1\le R\le N}\abs{S_R'} = \calO\big(N (\log N)^{-\alpha}\big), 
	\end{equation}
	for any $\alpha > 0$ and any $\beta \ge \beta_{\alpha}$ as long as $\beta_{\alpha}$ is large enough in terms of $\alpha$. We fix $\beta_1$, to be determined later, and apply
        Dirichlet's principle, obtaining $a_j/q_j$, with $(a_j, q_j) = 1$ and
        $1\le q_j \leq R^j (\log R)^{-\beta_1}$, so that
	\begin{equation}
		\label{eq:152}
		\Big\lvert
		\xi_j - \frac{a_j}{q_j}
		\Big\rvert
		\leq
		\frac{(\log R)^{\beta_1}}{q_jR^{j}}  
	\end{equation}
for all $1<j\le d$.
	There are two cases: 
        \begin{itemize}
        \item \emph{the minor arc} case, when for some
        $1<j\le d$ we have $(\log
        R)^{\beta_1} \leq q_j \leq R^j (\log R)^{-\beta_1}$, 
\item \emph{the major arc} case, when for all 
        $1<j\le d$ we have $1\le q_j \leq (\log R)^{\beta_1}$.
        \end{itemize}
	In the first case we make the choice of $\beta_1$ to be so
        large that the results from the first step can be applied, giving us
	the conclusion \eqref{eq:60} for the given $\alpha > 0$.

	For the case when $1\le q_j \leq (\log R)^{\beta_1}$ for all 
        $1<j\le d$, we will apply Lemma \ref{lem:11}. We
        will take $Q_1=\lcm(q_j: 1<j\le d)$. Write $\theta_j = \xi_j - a_j/q_j$,
        for each $1<j\le d$. Then \eqref{eq:152} implies that
	\begin{equation}
		\label{eq:153}
		\abs{\theta_j}
		\leq
		\frac{(\log R)^{\beta_1}}{q_jR^j}  
	\end{equation}
	and $Q_1 \leq (\log R)^{(d-1)\beta_1}$. We decompose $\ZZ$ modulo $Q_1$, and write $n = Q_1m  + r$,
	with $1 \leq r \leq Q_1$. Thus
	\[
		\sum_{n = 1}^R e^{2\pi i P(n)} 
		=
		\sum_{r = 1}^{Q_1} \sum_{m = 1}^{\lfloor R/Q_1 \rfloor} e^{2\pi i P(Q_1m + r)} + E
	\]
	where $\abs{E} \leq (\log R)^{(d-1)\beta_1}$ because $E$ involves at most $Q_1$ terms. Now,
	\[
		P(Q_1m+r) = \sum_{j=2}^d\xi_j(Q_1m+r)^j + \xi_1
                (Q_1m+r).
	\]
	Hence,
	\begin{align*}
		P(Q_1m + r) &\equiv 
		\sum_{j=2}^d(a_j/q_j) (Q_1m + r)^j + \theta_j (Q_1m+r)^j + \xi_1 (Q_1m+r) 
		\pmod 1\\
		& \equiv\sum_{j=2}^d
		(a_j/q_j) r^j +\sum_{j=2}^d \theta_j (Q_1m +r)^j + \xi_1 Q_1 m +\xi_1r \pmod 1.
\end{align*}
	Thus
	\begin{equation}
		\label{eq:151}
		\sum_{n = 1}^R e^{2\pi i P(n)} 
		=
		\sum_{r = 1}^{Q_1}
		e^{2\pi i \sum_{j=2}^d
		(a_j/q_j) r^j}
		\sum_{m = 1}^{\lfloor R/Q_1 \rfloor}
		A_{m, r} B_{m, r}
		+
		E,
	\end{equation}
	where 
	\begin{align*}
		A_{m, r} = e^{2\pi i \sum_{j=2}^d \theta_j (Q_1m +r)^j}, \qquad
		B_{m, r} = e^{2\pi i (\xi_1 Q_1 m + \xi_1r)}.
	\end{align*}
We estimate the inner sum in \eqref{eq:151}
	by writing it as
	\[
		\sum_{m = 1}^{\lfloor R/Q_1 \rfloor}
		A_{m, r} B_{m, r}
		=
		\sum_{m = 1}^{\lfloor R/Q_1 \rfloor}
		(A_{m,r} - A_{m+1, r}) S_m + \calO\big(|S_{\lfloor R/Q_1 \rfloor}|\big)
	\]
	with $S_m = \sum_{n = 1}^m B_{n, r}$. Since
	\[
		\abs{A_{m, r} - A_{m+1, r}} = \calO\Big(\sum_{j=2}^d\abs{\theta_j} Q_1 R^{j-1} \Big),
	\]
	by \eqref{eq:153}, we obtain
	\[
		\abs{A_{m, r}-A_{m+1, r}} = \calO\big(R^{-1} (\log R)^{d\beta_1}\big).
	\]
	To estimate $S_m$ we are going to apply Lemma
        \ref{lem:11}. Recall that $\beta_1$ has been fixed in the
        minor case, we now set $\beta' = (d-1)\beta_1$ and $Q =
        Q_1$. Since $Q_1 \leq (\log N)^{(d-1)\beta_1}$, we have $Q
        \leq (\log N)^{\beta'}$. Let $\alpha_2 = \alpha +1+
        d\beta_1$ and $\beta_{\alpha_2}$ be determined by
        $\alpha_2$ as in Step 1, and take 
\[
\beta_2\ge\beta_{\alpha_2}.
\]  
Then, for $\beta > 2 \beta_{2}
        + \beta'$ we have $\beta' < \beta$, $\beta_2<\beta-\beta'$, and $2 \beta_2 < \beta$, thus by Lemma \ref{lem:11}, 
	we obtain
	\[
          \Big|Q \xi_1 -\frac{\tilde{a}}{\tilde{q}}\Big|\le \frac{1}{\tilde{q}^2},
	\]
	for some integers $0\le \tilde{a}\le \tilde{q}$ such that $(\tilde{a}, \tilde{q})=1$ and $(\log N)^{\beta_2}\le \tilde{q}\le N(\log N)^{-\beta_2}$. 
	Hence, by Step 1 applied to the polynomial $x \mapsto \xi_1Q x $, we get
	\[
		\abs{S_m} = \calO\big(N (\log N)^{-\alpha_2}\big).
	\]
	Therefore,
	\[
		\Big\lvert
		\sum_{n=1}^R e^{2\pi i P(n)}
		\Big\rvert
		=
		\calO\big(Q_1 R^{-1} (\log R)^{d\beta_1} R Q_1^{-1} N(\log N)^{-\alpha_2}\big),
	\]
	since 
	\[
		\sum_{m = 1}^{\lfloor R/Q_1 \rfloor} N (\log N)^{-\alpha_2} 
		= 
		\calO\big(R Q_1^{-1} N(\log N)^{-\alpha_2}\big).
	\]
	Hence,
	\[
		\Big\lvert
		\sum_{n=1}^R e^{2\pi i P(n)}
		\Big\rvert
		=
		\calO\big(N (\log N)^{d\beta_1 - \alpha_2}\big),
	\]
	which gives us the desired conclusion and completes the proof of Theorem \ref{thm:3} for $k=1$. 
	In the next two steps we will handle  the general case. 
 
	\noindent {\bf Step 3.} Let  $k > 1$ and suppose that  there is a multi-index $\gamma_0$ such that $\abs{\gamma_0} = d$ and
	\[
		\Big\lvert
		\xi_{\gamma_0} - \frac{a}{q}
		\Big\rvert
		\leq
		\frac{1}{q^2},
	\]
	for some integers  $0\le a\le q$ with $(a,q)=1$ and $(\log N)^{\beta} \leq q
        \leq N^d (\log N)^{-\beta}$. We will need the following lemma.  
	\begin{lemma}[{\cite[Lemma 1]{SW0}}]
		\label{lem:16}
		For each $\gamma_0$ such that $\abs{\gamma_0} = d$,
                there exist $\nu$ linear transformations $L_1, \ldots,
                L_\nu$ of $\RR^k$ that have integer coefficients and
                determinant 1 (so that each $L_j$ is an automorphism
                of $\ZZ^k$) and integers $c_0, \ldots, c_\nu$, with
                $c_0 \neq 0$, so that if $\theta$ is the coefficient
                of $x^{\gamma_0}$ of $P(x)$, and $\sigma_j$ is the
                coefficient of $x_1^d$ of $P(L_j x)$, then
		\[
			c_0 \theta = c_1 \sigma_1 + \ldots + c_\nu\sigma_\nu.
		\]
		The operators $L_1, \ldots, L_\nu$, and integers $c_0,
                \ldots, c_\nu$, depend only on $k$, $d$, and
                $\gamma_0$. Moreover, $\nu$ is the dimension of the
                vector space of polynomials in $\RR^k$ which are
                homogeneous of degree $d$.
	\end{lemma}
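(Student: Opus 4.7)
The plan is to reinterpret both the coefficient $\theta$ of $x^{\gamma_0}$ in $P$ and the coefficients $\sigma_j$ of $x_1^d$ in $P(L_j x)$ as linear functionals on the $\nu$-dimensional $\QQ$-vector space $V_d$ of polynomials in $k$ variables that are homogeneous of degree $d$. Since a monomial $x^\gamma$ with $\abs{\gamma} < d$ can never produce an $x^{\gamma_0}$ term (as $\abs{\gamma_0}=d$) and cannot produce an $x_1^d$ term after a linear substitution, both $\theta$ and the $\sigma_j$ depend only on the degree-$d$ homogeneous part of $P$. Hence it suffices to establish the identity on $V_d$.

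For $L \in SL_k(\ZZ)$ I would define $\Phi_L(P) = \sigma_L(P)$, the coefficient of $x_1^d$ in $P(Lx)$, and observe that evaluating $P(Lx)$ at $x = t e_1$ with $e_1 = (1, 0, \ldots, 0)$ and using homogeneity yields
\[
	P(L \cdot t e_1) = t^d P(L e_1),
\]
so $\Phi_L(P) = P(L e_1)$ is simply evaluation at the first column $L e_1 \in \ZZ^k$. The first columns of matrices in $SL_k(\ZZ)$ are precisely the \emph{primitive} vectors in $\ZZ^k$ (those with $\gcd$ of entries equal to $1$): every primitive vector completes to a $\ZZ$-basis of $\ZZ^k$ by elementary divisor theory, and possibly negating one of the remaining basis vectors fixes the determinant to be $+1$.

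Next I would show that the family $\{\Phi_L : L \in SL_k(\ZZ)\}$ spans $V_d^*$ over $\QQ$. If $P \in V_d$ were annihilated by every $\Phi_L$, then $P$ would vanish on all primitive vectors of $\ZZ^k$; since every nonzero $w \in \QQ^k$ is a rational multiple of a primitive integer vector and $P$ is homogeneous, this forces $P \equiv 0$ on $\QQ^k \setminus \{0\}$, and by density and continuity $P \equiv 0$. A dimension count then allows one to pick $L_1, \ldots, L_\nu \in SL_k(\ZZ)$ so that $\Phi_{L_1}, \ldots, \Phi_{L_\nu}$ is a $\QQ$-basis of $V_d^*$. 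Finally, the functional $\theta: P \mapsto \xi_{\gamma_0}$ lies in $V_d^*$, hence $\theta = \sum_{j=1}^\nu r_j \Phi_{L_j}$ for some rationals $r_j$; clearing denominators produces a nonzero integer $c_0$ and integers $c_1, \ldots, c_\nu$ with
\[
	c_0 \theta = c_1 \Phi_{L_1} + \ldots + c_\nu \Phi_{L_\nu},
\]
which is exactly the asserted identity. The only mildly delicate step is the realization of an arbitrary primitive vector as the first column of a matrix with determinant exactly $+1$ (not merely $\pm 1$), which is trivial for $k = 1$ and handled by the sign adjustment above for $k \geq 2$; all other steps are linear algebra over $\QQ$.
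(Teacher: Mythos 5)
Your proof is correct; the paper does not reprove this lemma but cites it directly as \cite[Lemma 1]{SW0}, so there is no in-paper argument to compare against. Your route — observing that the coefficient of $x_1^d$ in $P(Lx)$ for homogeneous $P$ of degree $d$ is exactly $\sigma_L(P) = P(Le_1)$, and then showing that evaluation at primitive lattice vectors (which are precisely the admissible first columns of $SL_k(\ZZ)$ matrices for $k\ge 2$, with the sign fix you describe) spans the dual of the space of degree-$d$ homogeneous polynomials because a homogeneous polynomial vanishing on all primitive vectors vanishes identically — is, to my knowledge, essentially the Stein--Wainger device, so this is the same approach rather than a genuinely different one. A small bonus worth noting: in your construction the matrices $L_1,\ldots,L_\nu$ are chosen once and for all as a dual basis and hence do not depend on $\gamma_0$; only the integers $c_0,\ldots,c_\nu$ do, which is slightly stronger than the statement requires.
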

We shall apply Lemma \ref{lem:16} with $\theta=\xi_{\gamma_0}$.
	Now, for $\beta_1$ sufficiently large, determined below, apply
        Dirichlet's principle to each $\sigma_j$ to get $a_j/q_j$ so that
	$(a_j, q_j) = 1$ and
	\begin{equation}
		\label{eq:183}
		\Big\lvert
		\sigma_j
		-
		\frac{a_j}{q_j}
		\Big\rvert
		\leq
		\frac{(\log N)^{\beta_1}}{q_jN^d}, 
	\end{equation}
	and $1\le q_j \leq N^d (\log N)^{-\beta_1}$, for $j = 1,
        \ldots, \nu$. There are two cases. The first case, the
        minor case, that is when
	$q_j \geq (\log N)^{\beta_1}$ for at least one $j \in \{1, \ldots, \nu\}$, we write $L = L_j$ and 
	 $\tilde{P}(x) = P(L(x))$, $\tilde{\varphi}(x) = \varphi(L(x))$ and $\tilde{\Omega} = L^{-1}[\Omega]$. We observe that
	\begin{equation}
		\label{eq:157}
		\sum_{n \in \Omega \cap \ZZ^k} e^{2\pi i P(n)} \varphi(n)
		= 
		\sum_{n \in \tilde{\Omega} \cap \ZZ^k} e^{2\pi i \tilde{P}(n)}\tilde{\varphi}(n),
	\end{equation}
	and since $\Omega \subseteq \big\{x \in \RR^k : \norm{x} \leq N \big\}$, we have
	$\tilde{\Omega} \subseteq \big\{x \in \RR^k : \norm{x} \leq \tilde{c} N\big\}$ for some $\tilde{c} > 0$.

	Next, for each $n \in \ZZ^k$, write $n = (n_1, n')$, where $n_1 \in \ZZ$ and $n' \in \ZZ^{k-1}$, getting
	\begin{equation}
		\label{eq:158}
		\sum_{n \in \tilde{\Omega} \cap \ZZ^k}
		e^{2\pi i \tilde{P}(n)}\tilde{\varphi}(n)
		=
		\sum_{\atop{n' \in \ZZ^{k-1}}{|n'|\le \tilde cN}}
		\sum_{\atop{n_1 \in \ZZ}{(n_1, n') \in \tilde{\Omega}}}
		e^{2\pi i \tilde{P}(n_1, n')}\tilde{\varphi}(n_1, n').
	\end{equation}
	Thus it suffices to have
	\begin{equation}
		\label{eq:159}
		\Big\lvert
\sum_{\atop{n_1 \in \ZZ}{(n_1, n') \in \tilde{\Omega}}}
		e^{2\pi i \tilde{P}(n_1, n')}\tilde{\varphi}(n_1, n')
		\Big\rvert
		=
		\calO\big(N (\log N)^{-\alpha} \big),
	\end{equation}
	where the implicit constant is independent of $n'$. This will
        give the desired result for \eqref{eq:157}, since there are at
        most $\calO\big(N^{k-1}\big)$ terms that appear in the first
        summation in \eqref{eq:158} as
        $\norm{n'} \leq \tilde{c} N$. We now apply the results from
        Step 1 to the one-dimensional polynomial
        $n_1 \mapsto \tilde{P}(n_1, n')$.  Observe that
        $\tilde{P}(n_1, n') = \sigma_j n_1^d + \text{lower order terms
          of $n_1$}$.  We choose any $\beta_1 \ge\beta_{\alpha}$, where $\beta_{\alpha}$ is as in
        Step 1 and we obtain \eqref{eq:159}, which leads to the desired
        conclusion.

	The second case, the major case, where $1\le q_j
        \leq (\log N)^{\beta_1}$ for all $1\le j\le \nu$, will be seen to be empty,
	where $\beta_1$ has been fixed and $\beta$ is chosen large enough. In fact, by Lemma \ref{lem:16}
	\begin{equation}	
		\label{eq:160}
		\theta = c_0^{-1} \big(c_1 \sigma_1 + \ldots + c_\nu \sigma_\nu\big),
	\end{equation}
	so if we write $a'/q' = c_0^{-1}\big(c_1 a_1/q_1 + \ldots +c_\nu a_\nu /q_{\nu}\big)$, with $(a', q') = 1$,
	then $q' \leq c_0 (\log N)^{\beta'}$, with $\beta' = \nu
        \beta_1$ since $q_j \leq (\log N)^{\beta_1}$ for all $1\le
        j\le \nu$.

	Now there are two subcases: $a'/q' = a/q$ and $a'/q' \neq a/q$. The first is not possible, except for finitely many
	$N$, since 
	\[
		(\log N)^\beta \leq q = q' \leq c_0 (\log N)^{\beta'}
	\]
	and we may take $\beta > \beta'$. In the second case, taking
	into account  \eqref{eq:160} and \eqref{eq:183}, we write
	\[
		\frac{1}{q q'} 
		\leq
		\Big\lvert
		\frac{a'}{q'} - \frac{a}{q}
		\Big\rvert
		\leq
		\Big\lvert
		\theta - \frac{a}{q}
		\Big\rvert
		+
		\Big\lvert
		\theta - \frac{a'}{q'}
		\Big\rvert
		\leq
		\frac{1}{q^2} + C N^{-d} (\log N)^{\beta_1}.
                \]
                 Thus
	\[
		c_0^{-1} (\log N)^{-\beta'} \leq 
		\frac{1}{q'} \leq \frac{1}{q} + C q N^{-d} (\log N)^{\beta_1}
		\leq C' \big((\log N)^{-\beta}  + (\log N)^{\beta_1 - \beta}\big)
	\]
	since $(\log N)^\beta \leq q \leq N^d (\log N)^{-\beta}$. Thus, it is enough to take $\beta > (\nu + 1)\beta_1$
	to make this impossible, since $\beta > \beta' = \nu \beta_1$  and $\beta > \beta' + \beta_1$. This concludes Step 3.

	\noindent {\bf Step 4.} We assume that $k > 1$ and
        $\abs{\gamma_0} < d$. This step combines ideas of both Step 2 and
        Step 3. The argument is by backward induction on the degree of
        $\abs{\gamma_0}$. The result from
        Step 3 establishes the first step of the backward
        induction for $|\gamma_0|=d$. Assume that Theorem \ref{thm:3} holds for
        all $\abs{\gamma_0}=j$ such that $l<j\le d$.  Our aim now is to deduce it holds for
        $\abs{\gamma_0} = l$.

	Write $P = P_0 + P_1$, with 
\[
P_0(x) = \sum_{l<\abs{\gamma} \le d} \xi_\gamma x^\gamma\quad \text{and}
\quad
P_1(x) = \sum_{\abs{\gamma} \leq l} \xi_\gamma x^\gamma.
\]
 On $P_1$ we apply Lemma \ref{lem:16} and argue as in Step 3.
	This gives us an automorphism $L$ of $\ZZ^k$, so that if $\tilde{P}(x) = P(Lx)$, $\tilde{P}_0(x) = P_0(L x)$,
	$\tilde{P}_1(x) = P_1(Lx)$, then $\tilde{P}_1(x) = \theta x_1^{l} + R(x)$, where $R$ is a polynomial of degree
	$\le l$, but of degree $\le l-1$ in $x_1$. Also
	\[
		\Big\lvert
		\theta - \frac{a_1}{q_1}
		\Big\rvert
		\leq
		\frac{(\log N)^{\beta_0}}{q_1N^l}  
	\]
	with integers $0\le a_1\le q_1$ so that $(a_1, q_1) = 1$, and $(\log N)^{\beta_0} \leq q_1 \leq N^{l} (\log N)^{-\beta_0}$ as long as
	\begin{equation}
		\label{eq:161}
		(\nu_1 + 1) \beta_0 \leq \beta,
	\end{equation}
	where $\nu_1$ is the dimension of the space of homogeneous
        polynomials of degree $l$ in $\RR^k$. Let us emphasize that the parameter $\beta_0$
        in \eqref{eq:161} plays the role of $\beta_1$ from the
        previous step.  

        Next, we choose $\beta_1 > 0$ whose value will be determined
        later, and apply Dirichlet's principle to all the coefficients
        $\tilde{\xi}_\gamma$ of $\tilde{P}_0$ with $l<\abs{\gamma} \le
        d$. Thus we can find $a_\gamma/q_\gamma$, so that
	\[
		\Big\lvert
		\tilde{\xi}_\gamma - \frac{a_\gamma}{q_\gamma}
		\Big\rvert
		\leq
		\frac{(\log N)^{\beta_1}}{q_\gamma N^{|\gamma|}}  
	\]
	with $(a_\gamma, q_\gamma) = 1$ and $1\le q_\gamma \leq
        N^{|\gamma|} (\log N)^{-\beta_1}$. We also set $\theta_\gamma
        = \tilde{\xi}_\gamma - a_\gamma/q_\gamma$. If $(\log
        N)^{\beta_1} \leq q_\gamma$, for some $\gamma$ such that
        $l<|\gamma|\le d$, then by the induction hypothesis, we obtain
        \eqref{eq:56} provided that $\beta_1$ is sufficiently large,
        and we are done. We fix this $\beta_1$.

Let us now define $\beta' = \beta_1 \nu l$, where
$\nu=|\{\gamma\in\NN_0^k: l<|\gamma|\le d\}|$, and $\alpha_2=(k+1)(\nu+1)
\beta_1  + \alpha$ and $\beta_{\alpha_2}$  be determined 
by $\alpha_2$ as in Step 1, and take
\begin{align}
  \label{eq:16}
 \beta_2\ge\beta_{\alpha_2}.
\end{align}
Suppose that $1\le q_\gamma \leq (\log N)^{\beta_1}$, for all
$\gamma$ such that $l<\abs{\gamma} \le d$. Let $q' = \lcm (q_\gamma :
l<\abs{\gamma} \le d)$. Then $q' \leq (\log N)^{\nu \beta_1}$. Set $Q = (q')^{l}$,
then $Q \leq (\log N)^{\beta'}$. Next, we choose 
\begin{align}
\label{eq:14}
  \beta_0>2\beta_2+\beta'.
\end{align}
Then $\beta_0>\beta'$, $\beta_2<\beta_0-\beta'$ and $2\beta_2<\beta_0$, thus we can 
we apply Lemma \ref{lem:11} to get $\tilde{a}/\tilde{q}$ satisfying
	\[
		\Big\lvert
		Q \theta - \frac{\tilde{a}}{\tilde{q}}
		\Big\rvert
		\leq
		\frac{(\log N)^{\beta_2}}{\tilde{q}N^l} 
		\leq
		\frac{1}{\tilde{q}^2}
	\]
	with $(\tilde{a}, \tilde{q}) = 1$, $(\log N)^{\beta_2} \leq \tilde{q} \leq N^{l}
        (\log N)^{-\beta_2}$. Next, we break the sum
	\[
		\sum_{n \in \tilde{\Omega} \cap \ZZ^k} e^{2\pi i \tilde{P}(n)}\tilde{\varphi}(n)
	\]
	into essentially a sum of disjoint boxes of side-length $q'$. For $n \in \ZZ^k$, we write $n = q' m + r$,
	with $r \in \NN^k_{q'}$. Let
	\[
		\tilde{\Omega}_{q'} = \big\{m \in \ZZ^k : q' m + \ZZ^k_{q'} \subseteq \tilde{\Omega}\big\}.
	\]
	Then
	\[
		\sum_{n \in \tilde{\Omega} \cap \ZZ^k} e^{2\pi i \tilde{P}(n)} \tilde{\varphi}(n)
		=
		\sum_{r \in \ZZ^k_{q'}}
		\sum_{m \in \tilde{\Omega}_{ q'}}
		e^{2\pi i \tilde{P}(q'm + r)}\tilde{\varphi}(q'm+r)
		+
		\sum_{n \in \Delta} e^{2\pi i \tilde{P}(n)}\tilde{\varphi}(n).
	\]
	The residual set of points $\Delta$ are lattice points in
        $\tilde{\Omega}$ whose distance from the boundary of
        $\tilde{\Omega}$ is $\calO(q')=\calO(N^{1-3\sigma})$ for any
        $0<\sigma <1/3$. Hence, by Proposition \ref{prop:201} there
        are $\calO\big(N^{k-\sigma} \big)$ such points. Thus that sum
        contributes $\calO\big(N^{k-\sigma} \big)$, which is
        $\calO\big(N^k (\log N)^{-\alpha}\big)$ for every $\alpha >
        0$.

	Therefore, we are reduced to considering
	\begin{equation}
		\label{eq:163}
		\sum_{r \in \ZZ^k_{q'}} \sum_{m \in \tilde{\Omega}_{ q'}}
		e^{2\pi i \tilde{P}(q'm + r)}\tilde{\varphi}(q'm+r).
	\end{equation}
	Let us fix $r \in \ZZ^k_{q'}$ and $m' \in \ZZ^{k-1}$, and write
	\[
		\sum_{\atop{m_1 \in \ZZ}{(m_1,m') \in \tilde{\Omega}_{ q'}}}
		e^{2\pi i \tilde{P}(q'm + r)}\tilde{\varphi}(q'm+r)
		=
		\sum_{m_1 = M_0}^{M_1}
		A_{m_1} B_{m_1},
	\]
	where $\{M_0, \ldots, M_1\} = \big\{m_1\in\ZZ : (m_1, m') \in \tilde{\Omega}_{q'}\big\}$ and
	\[
		A_{m_1} =  e^{2\pi i \sum_{l<\abs{\gamma} \le d}\theta_\gamma ( q'm + r)^\gamma}, \quad \text{and} \quad
		B_{m_1} = e^{2\pi i Q \theta m_1^{l} + R(m_1)}\tilde{\varphi}(q'm+r),
	\]
	and $R$ is a polynomial in $m_1$ of degree $\le l-1$,
        depending on $r$ and $m'$. Now, by summation by parts we get
\[
		\sum_{m_1 = M_0}^{M_1}
		A_{m_1} B_{m_1}=		\sum_{m_1 = M_0}^{M_1}
		(A_{m_1} - A_{m_1+1}) S_{m_1} + \calO\big(|S_{M_1}|\big)+\calO\big(|S_{M_0-1}|\big)
	\]
	with $S_m = \sum_{n = 1}^m B_{n}$. But
	\begin{align*}
		\abs{A_{m_1} - A_{m_1+1}} &= \calO\Big(\sum_{l<\abs{\gamma} \le d}q'|\theta_{\gamma}| N^{|\gamma|-1}  \Big)\\
		&= \calO\big((\log N)^{\nu \beta_1} N^{|\gamma|-1} N^{-|\gamma|} (\log N)^{\beta_1} \big)\\
		&= \calO\big(N^{-1} (\log N)^{(\nu+1) \beta_1 }\big).
	\end{align*}
	However,
	\[
		S_{m_1} = \Big\lvert \sum_{n=1}^{m_1} B_{n} \Big \rvert = \calO\big(m_1 (\log m_1)^{-\alpha_2}\big),
	\]
	by the one-dimensional result applied to $Q \theta$, with
        $\alpha_2=(k+1)(\nu+1) \beta_1+\alpha$. Therefore,
	\[
		\Big\lvert 
		\sum_{m_1 = M_0}^{M_1} A_{m_1} B_{m_1} 
		\Big\rvert
		=
		\calO\big(N (\log N)^{(\nu+1) \beta_1  - \alpha_2}\big),
	\]
	and summing up with respect to $m'$ and $r$, we get that \eqref{eq:163} is
	\[
		\calO\big(N^k (\log N)^{(k+1)(\nu+1) \beta_1 - \alpha_2}\big)=\calO\big(N^k (\log N)^{- \alpha}\big).
	\]
	Thus taking into account \eqref{eq:161}, \eqref{eq:16} and
\eqref{eq:14}, we only need  to make $\beta$ large enough, to make
$\beta_0$ large enough, then to make $\beta_2$ large enough to get our
desired conclusion.
\end{proof}

\section{Vector-valued maximal estimates for averaging operators}
\label{sec:4}
This section is intended to prove Theorem \ref{thm:10}. We begin by setting up notation and terminology.
For any $x\in\ZZ^d$ and any function $f: \ZZ^d \rightarrow \CC$ with a finite support we have
$$
M_N f(x) = K_N * f(x),
$$
where $K_N$ is a kernel defined by
\begin{align}
	\label{eq:82}
	K_N(x) = N^{-k} \sum_{y \in \NN_N^k} \delta_{\calQ(y)},
\end{align}
where $\delta_y$ denotes the Dirac delta at $y \in \ZZ^k$ and $\calQ$ is the canonical polynomial defined
in Section \ref{sec:5}. Let $m_N$ denote the discrete Fourier transform of $K_N$, i.e.,
$$
m_N(\xi) = N^{-k} \sum_{y \in \NN_N^k} e^{2\pi i \sprod{\xi}{\calQ(y)}}, \quad\text{for}\quad\xi\in\TT^d.
$$
Finally, we define
$$
\Phi_N(\xi) = \int_{[0, 1]^k} e^{2\pi i \sprod{\xi}{\calQ(N y)}} {\: \rm d}y, \quad\text{for}\quad\xi\in\RR^d.
$$
Using a multi-dimensional version of van der Corput lemma (see \cite{ccw, bigs, sw}) we may estimate
\begin{equation}
\label{eq:83}
	\abs{\Phi_N(\xi)}
	\lesssim
	\min\big\{1, \norm{N^A \xi}_{\infty}^{-1/d} \big\},
\end{equation}
where $A$ is the $d\times d$ diagonal matrix defined in \eqref{eq:25}. Additionally, we have
\begin{equation}
	\label{eq:84}
	\abs{\Phi_N(\xi) - 1}
	\lesssim
	\min\big\{1, \norm{N^A \xi}_{\infty}\big\}.
\end{equation}
For $q\in\NN$ let us recall that
\[
	A_q=\big\{a\in\NN_q^d: \gcd\big(q, (a_{\gamma}: \gamma\in\Gamma)\big)=1\big\}.
\]
Next, for $q \in \NN$ and $a \in A_q$ we define the \emph{Gaussian sum} 
$$
	G(a/q) = q^{-k} \sum_{y \in \NN^k_q} e^{2\pi i \sprod{(a/q)}{\calQ(y)}}.
$$
By  multi-dimensional 
Weyl's inequality (see \cite[Proposition 3]{SW0}), there exists $\delta>0$ such that
\begin{equation}
	\label{eq:20}
	\lvert G(a/q) \rvert \lesssim q^{-\delta}.
\end{equation}

We shall prove that for every $p\in(1, \infty)$ there is
a constant $C_p>0$ such that for every sequence $\big(f_t:
t\in\NN\big)\in\ell^{p}\big(\ell^2\big(\ZZ^d\big)\big)$ of
non-negative functions with finite supports we have
\begin{align}
  \label{eq:6}
	\Big\|\Big(\sum_{t\in\NN}\sup_{n\in\NN_0}|M_{2^n}f_t|^2\Big)^{1/2}\Big\|_{\ell^p}
	\le
	C_p
	\Big\|\big(\sum_{t\in\NN}|f_t|^2\big)^{1/2}\Big\|_{\ell^p}.  
\end{align}
We begin by proving the following proposition.
\begin{proposition}
  \label{pr:0}
 There is a constant $C>0$ such
  that for every $N\in\NN$ and for every $\xi\in [-1/2, 1/2)^d$ satisfying 
        $$
	\Big\lvert \xi_\gamma - \frac{a_\gamma}{q} \Big\rvert \leq
        L_1^{-|\gamma|}L_2
	$$
	for all $\gamma \in \Gamma$, where  $1\le q\le L_3\le N^{1/2}$, $a\in
        A_q$, $L_1\ge N$  and $L_2\ge1$ we have
	\[
          \big|m_N(\xi)-G(a/q)\Phi_{N}(\xi-a/q)\big|\le
          C\Big(L_3N^{-1}+L_2L_3N^{-1}\sum_{\gamma \in
            \Gamma}\big(N/L_1\big)^{|\gamma|}\Big).
	\]
\end{proposition}
\begin{proof}
	Let $\theta = \xi - a/q$, then
	$$
	N^{-k}\sum_{y \in \NN_N^k} e^{2\pi i \sprod{\xi}{\calQ(y)}} 
	=
	q^{-k}\sum_{r \in \NN_q^k}
	e^{2\pi i \sprod{(a/q)}{\calQ(r)}}
	\cdot \Big(q^kN^{-k}\sum_{\atop{y \in \NN_N^k}{qy+r\in[1, N]^k}}
	e^{2\pi i \sprod{\theta}{\calQ(qy+r)}}\Big).
	$$
	If $ \norm{q y + r}, \norm{qy} \leq N$ then
        \begin{align*}
          \big\lvert
	\sprod{\theta}{\calQ(q y + r)} - \sprod{\theta}{\calQ(q y)}
	\big\rvert
	&\lesssim
	\norm{r}
	\sum_{\gamma \in \Gamma}
	\abs{\theta_\gamma}
	\cdot
	N^{(\abs{\gamma} - 1)}\\
	&\lesssim
	q \sum_{\gamma \in \Gamma}
	L_1^{-\abs{\gamma}}L_2 N^{(\abs{\gamma}-1)}\\
	&\lesssim
	L_2L_3N^{-1}\sum_{\gamma \in \Gamma}\big(N/L_1\big)^{\abs{\gamma}}.
        \end{align*}
	Thus
    \begin{align*}
        N^{-k}\sum_{y \in \NN_N^k} e^{2\pi i \sprod{\xi}{\calQ(y)}} 
		&=G(a/q)\cdot q^kN^{-k}\sum_{\atop{y \in \NN_N^k}{qy\in[1, N]^k}}
          e^{2\pi i \sprod{\theta}{\calQ(qy)}}\\
        &\phantom{=}+\mathcal O\Big(qN^{-1}+L_2L_3N^{-1}\sum_{\gamma \in \Gamma}\big(N/L_1\big)^{|\gamma|}\Big).  
	\end{align*}
By the mean value theorem one  replaces the sum on the right-hand side by the integral. Indeed, we have
  \begin{align*}
    & \Big|\sum_{y\in(0, \lfloor N/q\rfloor]^k}
    e^{2\pi i \sprod{\theta}{\calQ(qy)}}-\int_{[0, N/q]^k}e^{2\pi i \theta\cdot\calQ(qt)} {\: \rm d}t\Big|\\
    & \qquad \qquad = \Big|\sum_{y\in(0, \lfloor N/q\rfloor]^k} e^{2\pi i
      \sprod{\theta}{\calQ(qy)}}-\sum_{y\in(0, \lfloor
      N/q\rfloor-1]^k}\int_{y+(0, 1]^k}e^{2\pi i
      \theta\cdot\calQ(qt)}{\: \rm d} t\Big|+\mathcal O\big((N/q)^{k-1}\big)\\
	& \qquad \qquad =\Big|\sum_{y\in(0, \lfloor N/q\rfloor-1]^k} \int_{(0, 1]^k}e^{2\pi i
      \theta\cdot\calQ(qy)}-e^{2\pi i
      \theta\cdot\calQ(q(t+y))}{\: \rm d} t\Big|+\mathcal
    O\big((N/q)^{k-1}\big)\\
	& \qquad \qquad 
=\mathcal O\Big((N/q)^{k-1}+(N/q)^kL_2L_3N^{-1}\sum_{\gamma \in \Gamma}\big(N/L_1\big)^{|\gamma|}\Big).
  \end{align*}
	This completes the proof.
\end{proof}
Fix $p\in(1, \infty)$ and let $l\ge10$ be a large integer adjusted to
$p$, whose precise value will be specified later.  Let $\rho>0$ be the
parameter as in Theorem \ref{th:3} and suppose that
\begin{align}
  \label{eq:211}
  10\rho l=1.
\end{align}
For $\chi\in(0, 1/10)$ and for every $n\in\NN_0$ we introduce the multiplier
\begin{align}
  \label{eq:9}
  \Xi_{n}(\xi)=\sum_{a/q\in\mathscr U_{n^l}}
	\eta_n(\xi - a/q)
\end{align}
with $\mathscr U_{n^l}$ defined in \eqref{eq:156} and
\[
	\eta_n(\xi) = \eta\big(2^{n(A-\chi I)} \xi\big).
\]
Due to \eqref{eq:2} and \eqref{eq:211} we have
\begin{align}
  \label{eq:7}
  |\mathscr U_{n^l}|\lesssim e^{(d+1)n^{\rho l}}\lesssim e^{(d+1)n^{1/10}}.
\end{align}
Moreover, Theorem \ref{th:3} yields the following estimate
\begin{align}
\label{eq:35}
  \big\|\mathcal
  F^{-1}\big(\Xi_{n}\hat{f}\big)\big\|_{\ell^p}\lesssim_{\rho, p} \log(n+2)\|f\|_{\ell^p},
\end{align}
because for every $\gamma\in\Gamma$ and sufficiently large $n\in\NN_0$ we have
$2^{-n(|\gamma|-\chi)}\le e^{-n^{2\rho l}}\le e^{-n^{1/5}}$.

  Observe that the left-hand side of \eqref{eq:6} can be dominated as follows
	\begin{equation}
	\label{eq:36}
	\begin{aligned}
	\bigg\|\Big(\sum_{t\in\NN}\sup_{n\in\NN_0}|M_{2^n}f_t|^2\Big)^{1/2}\bigg\|_{\ell^p}
	&\le
        \bigg\|\Big(\sum_{t\in\NN}\sup_{n\in\NN_0}\big|\mathcal
	F^{-1}\big(m_{2^n}(1-\Xi_{n})\hat{f_t}\big)\big|^2\Big)^{1/2}\bigg\|_{\ell^p}\\
	&\phantom{\le}+\bigg\|\Big(\sum_{t\in\NN}\sup_{n\in\NN_0}\big|\mathcal
 	F^{-1}\big(m_{2^n}\Xi_{n}\hat{f_t}\big)\big|^2\Big)^{1/2}\bigg\|_{\ell^p}.
\end{aligned}
\end{equation}
Using the terminology from the circle method of the Hardy and Littlewood the first term in
\eqref{eq:36} corresponds to the minor arcs and the second term
corresponds to the major arcs.
\subsection{The estimate for the first term in \eqref{eq:36}}
Since
\[
\bigg\|\Big(\sum_{t\in\NN}\sup_{n\in\NN_0}\big|\mathcal
  F^{-1}\big(m_{2^n}(1-\Xi_{n})\hat{f_t}\big)\big|^2\Big)^{1/2}\bigg\|_{\ell^p}\le
\sum_{n\in\NN_0}\bigg\|\Big(\sum_{t\in\NN}\big|\mathcal
  F^{-1}\big(m_{2^n}(1-\Xi_{n})\hat{f_t}\big)\big|^2\Big)^{1/2}\bigg\|_{\ell^p},
\]
it suffices to show that
\begin{align}
  \label{eq:10}
  \bigg\|\Big(\sum_{t\in\NN}\big|\mathcal
  F^{-1}\big(m_{2^n}(1-\Xi_{n})\hat{f_t}\big)\big|^2\Big)^{1/2}\bigg\|_{\ell^p}\lesssim
  (n+1)^{-2}
  \Big\|\big(\sum_{t\in\NN}|f_t|^2\big)^{1/2}\Big\|_{\ell^p},  
\end{align}
which in view of \eqref{eq:69} holds, if we can prove that
\begin{align}
  \label{eq:61}
  \big\|\mathcal
  F^{-1}\big(m_{2^n}(1-\Xi_{n})\hat{f}\big)\big\|_{\ell^p}\lesssim (n+1)^{-2}\|f\|_{\ell^p}.
\end{align}
Indeed, by \eqref{eq:35} we have, for every $p\in(1, \infty)$, that
	\begin{align}
	\nonumber
	\big\|\mathcal F^{-1}\big(m_{2^n}(1-\Xi_{n})\hat{f}\big)\big\|_{\ell^p} 
	&\le
	\big\|
  	M_{2^n}f\big\|_{\ell^p}+\big\| M_{2^n}\big(
  	\mathcal F^{-1}\big(\Xi_{n}\hat{f}\big)\big)\big\|_{\ell^p}\\
	\label{eq:62}
	&\lesssim 
	\log(n+2)\|f\|_{\ell^p}.
\end{align}
We show that it is possible to improve  estimate
\eqref{eq:62} 
for $p=2$. Namely, by Theorem \ref{thm:3} for every large $\alpha>0$, which will be specified
later, and for all $n\in\NN_0$  we have
\begin{align}
  \label{eq:63}
\sup_{\xi\in\TT^d} \big|
 m_{2^n}(\xi)(1-\Xi_{n}(\xi))\big|\lesssim (n+1)^{-\alpha}.
\end{align}
In order to do so,
by Dirichlet's principle for every $\gamma\in\Gamma$ and $\beta>0$, we have
\[
|\xi_{\gamma}-a_{\gamma}/q_{\gamma}|\le q_{\gamma}^{-1}n^{\beta}2^{-n|\gamma|},
\] 
where $1\le q_{\gamma}\le n^{-\beta}2^{n|\gamma|}$. To apply Theorem \ref{thm:3}
we must show that there exists 
$\gamma\in\Gamma$ such that $n^{\beta}\le q_{\gamma }\le
n^{-\beta}2^{n|\gamma|}$. Suppose for a contradiction that for every
$\gamma \in \Gamma$  we have $1\le q_{\gamma }<n^{\beta} $. Then for
some 
$q\le \lcm(q_{\gamma}: \gamma\in\Gamma)\le n^{\beta d}$ we have 
\[
|\xi_{\gamma}-a_{\gamma}'/q|\le n^{\beta}2^{-n|\gamma|},
\] 
where $\gcd\big(q, \gcd({a_{\gamma}'}:
\gamma\in\Gamma)\big)=1$.
Hence, taking $a'=(a_{\gamma}': \gamma\in\Gamma)$ we have
$a'/q\in\mathscr U_{n^l}$ provided that $\beta d<l$. On the
other hand, if $1-\Xi_{n}(\xi)\not=0$ then there exists $\gamma\in\Gamma$ such that
\[
|\xi_{\gamma}-a_{\gamma}'/q|> 2^{-n(|\gamma|-\chi)}/16d.
\] 
Therefore,
\[
	2^{\chi n}<16dn^{\beta},
\] 
 but this gives a contradiction for sufficiently large $n\in\NN$.
Hence, there exists $\gamma\in\Gamma$ such that $n^{\beta}\le q_{\gamma }\le n^{-\beta}2^{n|\gamma|}$,
and Theorem \ref{thm:3} yields \eqref{eq:63}. Consequently, by Plancherel's theorem we obtain
\begin{align}
\label{eq:64}
\big\|\mathcal
  F^{-1}\big(m_{2^n}(1-\Xi_{n})\hat{f}\big)\big\|_{\ell^2}\lesssim(1+n)^{-\alpha} \|f\|_{\ell^2}.
\end{align}
Interpolating \eqref{eq:64} with \eqref{eq:62} we get 
\[
 \big\|\mathcal F^{-1}\big(m_{2^n}(1-\Xi_{n})\hat{f}\big)\big\|_{\ell^p}\lesssim(1+n)^{-c_p\alpha} \|f\|_{\ell^p},
\]
for some $c_p>0$. Choosing $\alpha>0$ and $l\in\NN$ appropriately
large and adjusted to $p$, we get \eqref{eq:61}.
 
\subsection{The estimate for the second term in \eqref{eq:36}}
Note that for any $\xi\in\TT^d$ satisfying
\[
|\xi_{\gamma}-a_{\gamma}/q|\le (8d)^{-1}2^{-n(|\gamma|-\chi)}
\] 
for every $\gamma\in \Gamma$, with $1\le q\le e^{n^{1/10}}$, we have 
\begin{align}
\label{eq:37}
  m_{2^n}(\xi)=G(a/q)\Phi_{2^n}(\xi-a/q)+E_{2^n}(\xi),
\end{align}
where 
\begin{align}
\label{eq:38}
  |E_{2^n}(\xi)|\lesssim 2^{-n/2}.
\end{align}
These two properties \eqref{eq:37} and \eqref{eq:38} follow from
Proposition \ref{pr:0} with $L_1=2^n$, $L_2=2^{\chi n}$ and
$L_3=e^{n^{1/10}}$, because
\[
|E_{2^n}(\xi)|\lesssim L_2L_32^{-n}\lesssim
\big(e^{-n((1-\chi)\log 2-2n^{-9/10})}\big)\lesssim 2^{-n/2},
\]
which holds for sufficiently large $n\in\NN$ and $\chi>0$ sufficiently small. 

For each $n \in \NN_0$, let us introduce the multiplier
\[
	\nu_{2^n}(\xi)=\sum_{a/q\in\mathscr U_{n^l}}G(a/q)\Phi_{2^n}(\xi-a/q) \eta_n(\xi - a/q).
\]
In view of \eqref{eq:37}, we have
\begin{align*}
 |m_{2^n}(\xi)\Xi_n(\xi)-\nu_{2^n}(\xi)|\lesssim 2^{-n/2},
\end{align*}
consequently by Plancherel's theorem
\begin{align}
  \label{eq:70}
  \big\|\mathcal
  F^{-1}\big((m_{2^n}\Xi_n-\nu_{2^n})\hat{f}\big)\big\|_{\ell^2}\lesssim
2^{-n/2}\|f\|_{\ell^2}.
\end{align}
Moreover, by Theorem \ref{th:3} we have
\begin{align*}
  \big\|\mathcal
  F^{-1}(m_{2^n}\Xi_n\hat{f})\big\|_{\ell^p}\lesssim  \log(n+2)\|f\|_{\ell^p},
\end{align*}
which together with \eqref{eq:7} gives
\begin{align*}
\big\|\mathcal F^{-1}(\nu_{2^n}\hat{f})\big\|_{\ell^p}
\lesssim
  |\mathscr U_{n^l}|\cdot\|f\|_{\ell^p}\lesssim e^{(d+1)n^{1/10}}\|f\|_{\ell^p}.
\end{align*}
Hence,
\begin{align}
  \label{eq:72}
  \big\|\mathcal
  F^{-1}\big((m_{2^n}\Xi_n-\nu_{2^n})\hat{f}\big)\big\|_{\ell^p}\lesssim
e^{(d+1)n^{1/10}}\|f\|_{\ell^p}.
\end{align}
Interpolating now \eqref{eq:70} with \eqref{eq:72} we  conclude that for some $c_p>0$,
\begin{align}
  \label{eq:73}
  \big\|\mathcal
  F^{-1}\big((m_{2^n}\Xi_n-\nu_{2^n})\hat{f}\big)\big\|_{\ell^p}\lesssim 2^{-c_pn}\|f\|_{\ell^p}.
\end{align}
Next, for every $n, s\in\NN_0$ we define the multiplier
\[
  \nu_{2^n}^s(\xi)=\sum_{a/q\in\mathscr U_{(s+1)^l}\setminus\mathscr
    U_{s^l}}
G(a/q)\Phi_{2^n}(\xi-a/q) \eta_s(\xi - a/q).
\]
Notice that, by \eqref{eq:83}, if $\eta_s(\xi -a/q)-\eta_n(\xi - a/q) \neq 0$ then
\[
	|\Phi_{2^n}(\xi-a/q)|\lesssim 2^{-\chi n/d}.
\]
Thus
\begin{align*}
	\big|\nu_{2^n}(\xi)-\sum_{0\le s<n}\nu_{2^n}^s(\xi)\big| 
	&\le\sum_{0\le s<n} \sum_{a/q\in\mathscr U_{(s+1)^l}\setminus\mathscr U_{s^l}} 
	|G(a/q)| |\Phi_{2^n}(\xi-a/q)| \big|\eta_s(\xi - a/q) - \eta_n(\xi - a/q) \big| \\
	&\lesssim 2^{-\chi n/d},
\end{align*}
which combined with Plancherel's theorem implies that
\begin{align}
  \label{eq:76}
  	\big\|\mathcal F^{-1}\big(\big(\nu_{2^n}-\sum_{0\le
    s<n}\nu_{2^n}^s\big)\hat{f}\big)\big\|_{\ell^2}
	\lesssim 2^{-\chi n/d}\|f\|_{\ell^2}.
\end{align}
Moreover, since $|\mathscr U_{s^l}|\le |\mathscr U_{n^l}|\lesssim e^{(d+1)n^{1/10}}$ we
have
\begin{align}
  \label{eq:78}
  \big\|\calF^{-1}\big(\big(\nu_{2^n}-\sum_{0\le
    s<n}\nu_{2^n}^s\big)\hat{f}\big)\big\|_{\ell^p}
	\lesssim e^{(d+1)n^{1/10}}\|f\|_{\ell^p}.
\end{align}
Interpolating \eqref{eq:76} with \eqref{eq:78} one immediately
concludes that for some $c_p>0$,
\begin{align}
  \label{eq:80}
  \big\|\mathcal F^{-1}\big(\big(\nu_{2^n}-\sum_{0\le
    s<n}\nu_{2^n}^s\big)\hat{f}\big)\big\|_{\ell^p}\lesssim 2^{-c_pn}\|f\|_{\ell^p}.
\end{align}
In view of \eqref{eq:69}, \eqref{eq:73} and \eqref{eq:80} it suffices to prove that
for every $s\in\NN_0$ we have
\begin{align}
  \label{eq:81}
	\Big\|\Big(\sum_{t\in\NN}\sup_{n\in\NN_0}
	\big|\calF^{-1}\big(\nu_{2^n}^s\hat{f_t}\big)\big|^2\Big)^{1/2}\Big\|_{\ell^p}
  	\lesssim
  	(s+1)^{-2}
  	\Big\|\big(\sum_{t\in\NN}|f_t|^2\big)^{1/2}\Big\|_{\ell^p}.
\end{align}

\subsection{$\ell^2(\ZZ^d)$ estimates for \eqref{eq:81}}
Our aim is to show
\begin{align}
  \label{eq:178}
\Big\|\Big(\sum_{t\in\NN}\sup_{n\in\NN_0}\big|\mathcal
  F^{-1}\big(\nu_{2^n}^s\hat{f_t}\big)\big|^2\Big)^{1/2}\Big\|_{\ell^2}
  \lesssim
  (s+1)^{-\delta l+1}
  \Big\|\Big(\sum_{t\in\NN}|f_t|^2\Big)^{1/2}\Big\|_{\ell^2}
\end{align}
with $\delta>0$ as in \eqref{eq:20}. 
In order to do so, we only need to prove the
following theorem.
\begin{theorem}
	\label{thm:2}
	There is $C > 0$ such that for any $s \in \NN_0$ and
for every        $f\in\ell^2\big(\ZZ^d\big)$ we have
	\[
          \big\lVert
	\sup_{n \in \NN_0}
	\big\lvert
	\calF^{-1}\big(\nu_{2^n}^s \hat{f}\big)
	\big\rvert
	\big\rVert_{\ell^2}
	\leq
	C
	(s+1)^{-\delta l+1}
	\lVert f \rVert_{\ell^2}
	\]
with $l\in\NN$ defined in \eqref{eq:211} and $\delta>0$ as in \eqref{eq:20}.
\end{theorem}
\begin{proof}
	For $s \in \NN_0$ we set $\kappa_s = 20 d \big(\lfloor (s+1)^{1/10}\rfloor+1\big)$ and $Q_s = \big(\big\lfloor e^{(s+1)^{1/10}}\big\rfloor\big)!$. Firstly,
	we estimate the supremum over $0 \le n \le 2^{\kappa_s}$. By Lemma \ref{lem:6} we have
	$$
	\big\lVert
	\sup_{0\le n\le  2^{\kappa_s}}
	\big\lvert
	\calF^{-1}\big(\nu_{2^n}^s \hat{f} \big)
	\big\rvert
	\big\rVert_{\ell^2}
	\lesssim
	\big\lVert
	\calF^{-1}\big(\nu_1^s \hat{f} \big)
	\big\rVert_{\ell^2}
	+
	\sum_{i = 0}^{\kappa_s}
	\Big(
	\sum_{j = 0}^{2^{\kappa_s-i}-1}
	\Big\lVert
	\sum_{m \in I_j^i}
	\calF^{-1}\big((\nu_{2^{m+1}}^s-\nu_{2^{m}}^s) \hat{f}\big)
	\Big\rVert_{\ell^2}^2
	\Big)^{1/2}
	$$
	where $I_j^i = [j 2^i, (j+1) 2^i)$. For any $i \in \{0, \ldots, \kappa_s\}$, by Plancherel's
	theorem  we get
	\begin{align*}
		& \sum_{j = 0}^{2^{\kappa_s-i}-1}
	\Big\lVert
	\sum_{m \in I_j^i}
	\calF^{-1}\big((\nu_{2^{m+1}}^s-\nu_{2^{m}}^s) \hat{f}\big)
	\Big\rVert_{\ell^2}^2
		\leq
		\sum_{a/q \in \mathscr{U}_{(s+1)^l}\setminus\mathscr{U}_{s^l}}
		\abs{G(a/q)}^2 \\
		&\qquad\qquad \times
		\sum_{j = 0}^{2^{\kappa_s-i} - 1}
		\sum_{m, m' \in I_j^i}
		\int_{\TT^d}
		\abs{\Delta_m(\xi - a/q)}
		\cdot
		\abs{\Delta_{m'}(\xi - a/q)}
		\cdot
		\eta_s(\xi - a/q)^2
		\abs{\hat{f}(\xi)}^2
		{\: \rm d}\xi,
	\end{align*}
where $\Delta_m(\xi)=\Phi_{2^{m+1}}(\xi)-\Phi_{2^{m}}(\xi)$.
	In the last step we have used disjointness of supports of $\eta_s(\cdot - a/q)$
	while $a/q$ varies over $\mathscr{U}_{(s+1)^l}\setminus\mathscr{U}_{s^l}$.
	Using \eqref{eq:83} and \eqref{eq:84} we conclude
	\[
		\sum_{m \in \ZZ} \big\lvert
        \Delta_m(\xi)  \big\rvert \lesssim  \sum_{m \in \ZZ} 
        \min\big\{|2^{mA}\xi|_{\infty}, |2^{mA}\xi|_{\infty}^{-1/d}\big\}   \lesssim 1.
	\]
	Therefore, by \eqref{eq:20} we may estimate
        \begin{align*}
        \sum_{j = 0}^{2^{\kappa_s-i} - 1}
	\Big\lVert
	\sum_{m \in I_j^i}
	\calF^{-1}\big((\nu_{2^{m+1}}^s-\nu_{2^{m}}^s) \hat{f}\big)
	\Big\rVert_{\ell^2}^2
	& \lesssim
	(s+1)^{-2\delta l}
	\sum_{a/q \in \mathscr{U}_{(s+1)^l}\setminus\mathscr{U}_{s^l}}
	\int_{\TT^d} \eta_s(\xi - a/q)^2 \abs{\hat{f}(\xi)}^2 {\: \rm d}\xi \\
	& \lesssim
	(s+1)^{-2\delta l}
	\lVert f \rVert_{\ell^2}^2,
    \end{align*}
	because $q\ge s^l$ for $a/q \in \mathscr{U}_{(s+1)^l}\setminus\mathscr{U}_{s^l}$.  We have just proven
	\begin{equation}
		\label{eq:88}
		\big\lVert
		\sup_{0\le n \leq 2^{\kappa_s}}
		\big\lvert
		\calF^{-1}\big(\nu_{2^n}^s \hat{f} \big)
		\big\rvert
		\big\rVert_{\ell^2}
		\lesssim
		\kappa_s (s+1)^{-\delta l } 
		\lVert f \rVert_{\ell^2}
		\lesssim (s+1)^{-\delta l +1}
		\vnorm{f}_{\ell^2}.
	\end{equation}
	Next, we consider the case when the supremum is taken over $n \geq 2^{\kappa_s}$. For any
	$x, y \in \ZZ^d$ we define
	$$
	I(x, y)
	=
	\sup_{n \geq 2^{\kappa_s}}
	\Big\lvert
	\sum_{a/q \in \mathscr{U}_{(s+1)^l}\setminus\mathscr{U}_{s^l}}
	G(a/q)
	e^{-2\pi i \sprod{(a/q)}{x}}
	\calF^{-1} \big(
	\Phi_{2^n}
	\eta_s
	\hat{f}(\cdot + a/q)
	\big)(y)
	\Big\rvert
	$$
    and
	\begin{align*}
		J(x, y) =
		\sum_{a/q \in \mathscr{U}_{(s+1)^l}\setminus\mathscr{U}_{s^l}} G(a/q) e^{-2\pi i \sprod{(a/q)}{x}}
		\calF^{-1} \big( \eta_s \hat{f}(\cdot + a/q)\big)(y).
	\end{align*}
	By Plancherel's theorem, for any $u \in \NN^d_{Q_s}$ and $a/q \in \mathscr{U}_{(s+1)^l}\setminus\mathscr{U}_{s^l}$
	we have
	\begin{align*}
		& \big\lVert
		\calF^{-1}\big( \Phi_{2^n} \eta_s \hat{f}\big(\cdot + a/q\big)\big)(x+u)
		-
		\calF^{-1}\big( \Phi_{2^n} \eta_s \hat{f}\big(\cdot + a/q\big)\big)(x)
		\big\rVert_{\ell^2(x)} \\
		& \qquad\qquad =
		\big\lVert
		(1 - e^{-2\pi i \sprod{\xi}{u}}) \Phi_{2^n}(\xi) \eta_s(\xi) \hat{f}(\xi + a/q)
		\big\rVert_{L^2({\rm d}\xi)} \\
		& \qquad\qquad 
		\lesssim
		2^{-n/d}
		\cdot
		\norm{u}
		\cdot
		\big\lVert
		\eta_s(\cdot - a/q) \hat{f}
		\big\rVert_{L^2}
	\end{align*}
	since, by \eqref{eq:83}, we see
	$$
	\sup_{\xi \in \TT^d}
	\norm{\xi} \cdot \abs{\Phi_{2^n}(\xi)}
	\lesssim
	\sup_{\xi \in \TT^d}
	\norm{\xi} \cdot \norm{2^{nA}\xi}^{-1/d}
	\leq 2^{-n/d}.
	$$
	Therefore,
        \begin{align*}
        \big\lvert
		\lVert I(x, x+u) \rVert_{\ell^2(x)}
		- \lVert I(x,x) \rVert_{\ell^2(x)}
		\big\rvert
		&\lesssim
		\norm{u}
		\sum_{n = 2^{\kappa_s}}^\infty
		2^{-n/d}
		\sum_{a/q \in \mathscr{U}_{(s+1)^l}\setminus\mathscr{U}_{s^l}}
        \lVert \eta_s(\cdot - a/q) \hat{f} \rVert_{\ell^2}\\
        &\lesssim
		2^{-2^{\kappa_s}/d}\cdot Q_s\cdot |\mathscr{U}_{(s+1)^l}|\cdot\|f\|_{\ell^2}. 
   	\end{align*}
	By \eqref{eq:7} the set $\mathscr{U}_{(s+1)^l}$ contains at most $e^{(d+1)(s+1)^{1/10}}$ elements and
	\[
		 2^{\kappa_s}(\log 2)/d - (s+1)^{1/10}e^{(s+1)^{1/10}}-
        (d+1) (s+1)^{1/10} \geq  s,
	\] 
for sufficiently large $s\ge 0$. Thus we obtain
	$$
	\lVert I(x, x) \rVert_{\ell^2(x)} \lesssim
	\lVert I(x, x+u) \rVert_{\ell^2(x)} + 2^{-s } \lVert f \rVert_{\ell^2}.
	$$
	In particular,
	\[
	\big\lVert
	\sup_{n \geq 2^{\kappa_s}}
	\big\lvert
	\calF^{-1}\big(\nu_{2^n}^s \hat{f} \big)
	\big\rvert
	\Big\rVert_{\ell^2}^2
	\lesssim
	\frac{1}{Q_s^d}
	\sum_{u \in \NN_{Q_s}^d}
	\big\lVert I(x, x+u) \big\rVert_{\ell^2(x)}^2
	+
	2^{-2s }
	\lVert f \rVert_{\ell^2}^2.
	\]
	Let us observe that the functions $x \mapsto I(x, y)$ and $x \mapsto J(x, y)$ are
	$Q_s\ZZ^d$-periodic. Therefore,  by double change of variables we get
	$$
	\sum_{u \in \NN_{Q_s}^d}
	\lVert I(x, x+u) \rVert_{\ell^2(x)}^2
	=
	\sum_{x \in \ZZ^d}
	\sum_{u \in \NN_{Q_s}^d}
	I(x-u, x)^2
	=
	\sum_{x \in \ZZ^d}
	\sum_{u \in \NN_{Q_s}^d} I(u, x)^2
	=
	\sum_{u \in \NN_{Q_s}^d}
	\lVert I(u, x) \rVert_{\ell^2(x)}^2,
	$$
where in the second equality periodicity has been used. 
	Next by Proposition \ref{prop:4} (or \cite[Proposition 2.1]{MSW}) we obtain
	\begin{align*}
	\sum_{u \in \NN_{Q_s}^d}
	\lVert I(u, x) \rVert_{\ell^2(x)}^2 & \lesssim 
		\sum_{u \in \NN_{Q_s}^d}
		\lVert
		J(u, x)
		\rVert_{\ell^2(x)}^2 
		=	\sum_{u \in \NN_{Q_s}^d} \lVert J(x, x+u) \rVert_{\ell^2(x)}^2 \\
		& =\sum_{u \in \NN_{Q_s}^d}
		\int_{\TT^d}\Big|\sum_{a/q\in\mathscr{U}_{(s+1)^l}\setminus\mathscr{U}_{s^l}}G(a/q) e^{2\pi i \sprod{(a/q)}{u}}
		\eta_s(\xi-a/q)
		\hat{f}(\xi)\Big|^2
		{\: \rm d}\xi \\
		& \lesssim
		(s+1)^{-2\delta l}
		Q_s^d
		\cdot
		\|f\|_{\ell^2}^2.
	\end{align*}
	In the last step we have  used \eqref{eq:20} and the disjointness of supports of
	$\eta_s(\cdot - a/q)$ while $a/q$ varies over $\mathscr{U}_{(s+1)^l}\setminus\mathscr{U}_{s^l}$. Therefore,
	\[
		\big\lVert
		\sup_{n \geq 2^{\kappa_s}}
		\big\lvert
		\calF^{-1}\big(\nu_{2^n}^s \hat{f} \big)
		\big\rvert
		\big\rVert_{\ell^2}
		\lesssim
		(s+1)^{-\delta l} 
		\vnorm{f}_{\ell^2},
	\]
	which together with \eqref{eq:88} concludes the proof.
\end{proof}
\subsection{$\ell^p\big(\ZZ^d\big)$ estimates for \eqref{eq:81}} For
$s\in\NN_0$ let
 $$\kappa_s = 20 d \big(\lfloor (s+1)^{1/10}\rfloor+1\big)$$ 
and 
$$Q_s = \big(\big\lfloor e^{(s+1)^{1/10}}\big\rfloor\big)!$$
be as in the proof of Theorem \ref{thm:2}. We show that  for every
$p\in(1, \infty)$ there is a constant $C_p>0$ such that for every
$s\in\NN_0$ and all sequences $(f_t: t\in\NN) \in \ell^p\big(\ell^2\big(\ZZ^{d}\big)\big)$ we have
\begin{align}
  \label{eq:89}
  \Big\|\Big(\sum_{t\in\NN}\sup_{n\in\NN_0}\big|\calF^{-1}\big(\nu_{2^n}^s\hat{f_t}\big)\big|^2\Big)^{1/2}
	\Big\|_{\ell^p}
  \le C_p
  s\log(s+2)
  \Big\|\Big(\sum_{t\in\NN}|f_t|^2\Big)^{1/2}\Big\|_{\ell^p}.
\end{align}
Then interpolation between \eqref{eq:178} and \eqref{eq:89}  will immediately
imply \eqref{eq:81}. The proof of \eqref{eq:89} consists of two
steps. We separately bound the supremum when $0\le n\le 2^{\kappa_s}$
and when $n\ge 2^{\kappa_s}$, see Theorem \ref{thm:7} and Theorem
\ref{thm:8}, respectively. 

\begin{theorem}
\label{thm:7}
Let $p\in(1, \infty)$ then there is a constant $C_p>0$ such that for any
$s\in\NN_0$ and for every sequence $(f_t: t\in\NN) \in
\ell^p\big(\ell^2\big(\ZZ^{d}\big)\big)$ we have
\[
  \Big\|\Big(\sum_{t\in\NN}\sup_{0\le n\le 2^{\kappa_s}}\big|\mathcal
  F^{-1}\big(\nu_{2^n}^s\hat{f_t}\big)\big|^2\Big)^{1/2}\Big\|_{\ell^p}
  \le C_p
  s\log(s+2)
  \Big\|\Big(\sum_{t\in\NN}|f_t|^2\Big)^{1/2}\Big\|_{\ell^p}.
\]
\end{theorem}
\begin{proof}
	We set $J=\big\lfloor e^{(s+1)^{1/2}}\big\rfloor$ and
        define 
	$$
	\mu_J(\xi) = \hat{K}_J(\xi)= J^{-k} \sum_{y \in \NN_J^k} e^{2\pi i \sprod{\xi}{\calQ(y)}},
	$$
        where $K_J$ is the kernel of the operator $M_J$, see
        \eqref{eq:82}.  We notice that for each $r \in [1, \infty]$ we have
	\[
		\big\lVert
		\calF^{-1} \big(\mu_J \hat{f} \big)
		\big\rVert_{\ell^r}
		\leq
		\vnorm{f}_{\ell^r}.
	\]
	Moreover, if $\xi \in \TT^d$ is such that  $\norm{\xi_{\gamma} - a_{\gamma}/q}
        \leq 2^{-s(|\gamma|-\chi)}$  for every $\gamma\in\Gamma$ with some $1 \leq q \leq e^{(s+1)^{1/10}}$ and
	$a \in A_q$, then 
	\begin{equation*}
		\mu_J(\xi) = G(a/q)\Phi_J(\xi-a/q)+\mathcal O\big(e^{-\frac{1}{2}(s+1)^{1/2}}\big).
	\end{equation*}
	Indeed, by Proposition \ref{pr:0} with $L_1=2^s$,
        $L_2=2^{s\chi}$, $L_3=e^{(s+1)^{1/10}}$ and $N=J$ we see that the
        error term is dominated by
        \begin{align*}
		L_3J^{-1}+L_2L_3J^{-1}\sum_{\gamma \in
		\Gamma}\big(J/L_1\big)^{|\gamma|}
		& \lesssim
		e^{(s+1)^{1/10}-(s+1)^{1/2}}+2^{s\chi}e^{(s+1)^{1/10}-(s+1)^{1/2}}(e^{(s+1)^{1/2}}\cdot2^{-s}) \\
		& \lesssim 
		e^{-\frac{1}{2}(s+1)^{1/2}}.
        \end{align*}
		Therefore,
	\begin{equation}
		\label{eq:95}
		\big\lvert \mu_J(\xi) - G(a/q) \big\rvert 
		\lesssim\big|G(a/q)(\Phi_J(\xi-a/q)-1)\big|+e^{-\frac{1}{2}(s+1)^{1/2}}
		\lesssim e^{-\frac{1}{2}(s+1)^{1/2}},
	\end{equation}
	because
	\[
		\big|\Phi_J(\xi-a/q)-1\big|
		\lesssim 
		 \big\lvert J^{A}(\xi-a/q) \big\rvert\lesssim e^{(s+1)^{1/2}}2^{-s(1-\chi)}.
	\]
	Next, let us define the multipliers 
	\[
  	\Pi_{2^n}^s(\xi)=\sum_{a/q\in\mathscr U_{(s+1)^l}\setminus\mathscr
    U_{s^l}}
	\Phi_{2^n}(\xi-a/q)\eta_s(\xi - a/q).
	\]
	We observe that by \eqref{eq:95} we have
	\[
  		\nu_{2^n}^s(\xi)-\mu_J(\xi)\Pi_{2^n}^s(\xi)=\mathcal O\big(e^{-\frac{1}{2}(s+1)^{1/2}}\big).
	\]
	Hence, by Plancherel's theorem we get 
\begin{align}
  \label{eq:99}
  \big\|\mathcal
  F^{-1}\big((\nu_{2^n}^s-\mu_J\Pi_{2^n}^s)\hat{f}\big)\big\|_{\ell^2}\lesssim e^{-\frac{1}{2}(s+1)^{1/2}}\|f\|_{\ell^2}.
\end{align}
Moreover, by \eqref{eq:7}, for every $p\in(1, \infty)$, we have a trivial bound
\begin{align}
	\nonumber
	\big\|\mathcal F^{-1}\big((\nu_{2^n}^s-\mu_J\Pi_{2^n}^s)\hat{f}\big)\big\|_{\ell^p}
	&\lesssim
	|\mathscr U_{(s+1)^l}| \cdot \|f\|_{\ell^p} \\
	\label{eq:100}
	&\lesssim e^{(d+1)(s+1)^{1/10}}\|f\|_{\ell^p}.  
\end{align}
Interpolating now \eqref{eq:99} with \eqref{eq:100} one has that for some
$c_p>0$,
\begin{align}
  \label{eq:101}
\big\|\mathcal
  F^{-1}\big((\nu_{2^n}^s-\mu_J\Pi_{2^n}^s)\hat{f}\big)\big\|_{\ell^p}\lesssim
   e^{-c_p(s+1)^{1/2}}\|f\|_{\ell^p}.  
\end{align}
Thus in view of \eqref{eq:69} and \eqref{eq:101} we obtain
\begin{align*}
  \Big\|\Big(\sum_{t\in\NN}\sup_{0\le n\le 2^{\kappa_s}}
	\big|\calF^{-1}\big((\nu_{2^n}^s-\mu_J\Pi_{2^n}^s)\hat{f_t}\big)\big|^2\Big)^{1/2}\Big\|_{\ell^p}
	& \lesssim \sum_{n=0}^{2^{\kappa_s}}  \Big\|\Big(\sum_{t\in\NN}
	\big|\calF^{-1}\big((\nu_{2^n}^s-\mu_J\Pi_{2^n}^s)\hat{f_t}\big)\big|^2\Big)^{1/2}\Big\|_{\ell^p} \\
	& \lesssim 
	\Big\|\big(\sum_{t\in\NN}|f_t|^2\big)^{1/2}\Big\|_{\ell^p} 
\end{align*}
since $2^{\kappa_s}e^{-c_p(s+1)^{1/2}}\lesssim 1$. The proof of
Theorem \ref{thm:7} will be completed if we show 
	\[
	\Big\|\Big(\sum_{t\in\NN}\sup_{0\le n\le 2^{\kappa_s}}
	\big|\calF^{-1}\big(\Pi_{2^n}^s\hat{f_t}\big)\big|^2\Big)^{1/2}\Big\|_{\ell^p} 
	\lesssim \kappa_{s} \log(s+2)
 	\Big\|\big(\sum_{t\in\NN}|f_t|^2\big)^{1/2}\Big\|_{\ell^p}.
	\]
  Appealing to inequality \eqref{eq:21} we see that
\begin{align*}
	\Big\|\Big(\sum_{t\in\NN}\sup_{0\le n\le 2^{\kappa_s}}
	\big|\calF^{-1}\big(\Pi_{2^n}^s\hat{f_t}\big)\big|^2\Big)^{1/2}\Big\|_{\ell^p}
	&\lesssim \Big\|\Big(\sum_{t\in\NN}\big|\mathcal
  F^{-1}\big(\Pi_{1}^s\hat{f_t}\big)\big|^2\Big)^{1/2}\bigg\|_{\ell^p}\\
	&\phantom{\lesssim}+\sum_{i=0}^{\kappa_s}\bigg\|\Big(\sum_{t\in\NN}\sum_{j = 0}^{2^{\kappa_s-i}-1}
	\Big|
	\sum_{m \in I_j^i}
	\calF^{-1}\big((\Pi_{2^{m+1}}^s-\Pi_{2^{m}}^s) \hat{f_t}\big)
	\Big|^2
	\Big)^{1/2}\bigg\|_{\ell^p},
\end{align*}
where $I_j^i = [j 2^i, (j+1)2^i)$.
In view of Lemma \ref{lem:30} if suffices to show that for every 
$i\in\{0, 1, \ldots, \kappa_s\}$ and $\omega\in[0, 1]$ we have 
\begin{align}
  \label{eq:104}
\Big\|\sum_{j = 0}^{2^{\kappa_s-i}-1}
	\sum_{m \in I_j^i}\varepsilon_j(\omega)
	\calF^{-1}\big((\Pi_{2^{m+1}}^s-\Pi_{2^{m}}^s) \hat{f}\big)
	\Big\|_{\ell^p}\lesssim \log(s+2) \|f\|_{\ell^p}
\end{align}
for any Rademacher sequence  
	$\varepsilon=\big(\varepsilon_j(\omega) : 0 \leq j <
        2^{\kappa_s - i} \big)$ with $\varepsilon_j(\omega) \in \{-1,
        1\}$. To do so, let us consider the operator
	$$
	\calT_\varepsilon f 
	= \sum_{a/q \in \mathscr{U}_{(s+1)^l}\setminus \mathscr{U}_{s^l}} \calF^{-1}
	\big(\Theta(\cdot- a/q) \eta_s(\cdot- a/q) \hat{f}\big)
	$$
	with 
	$$
	\Theta
	= \sum_{j = 0}^{2^{\kappa_s - i}-1} \varepsilon_j(\omega)
	\sum_{m \in I_j^i} (\Phi_{2^{m+1}}-\Phi_{2^m}).
	$$
	We notice that the multiplier
        $\Theta$ corresponds to a continuous singular Radon transform.
        Thus $\Theta$ defines a bounded
	operator on $L^r\big(\RR^d\big)$ for any $r \in (1, \infty)$ with the bound independent of
	the underlying  sequence $\big(\varepsilon_j(\omega) : 0 \leq j \leq 2^{\kappa_s-i}\big)$
	(see \cite[Section 11]{bigs} or \cite{DuoRdF}). Hence, by Theorem \ref{th:3} we get
	\[
	\|\calT_\varepsilon f\|_{\ell^p}\lesssim \log(s+2) \|f\|_{\ell^p}.
	\]
	Consequently, we obtain \eqref{eq:104} and the proof of Theorem \ref{thm:7} is completed. 
\end{proof}
For each $N \in \NN$ and $s\in\NN_0$ we define the  multiplier
\[
	\Omega_N^s(\xi)
	=\sum_{a/q \in \mathscr{U}_{(s+1)^l}\setminus \mathscr{U}_{s^l}}
	G(a/q) \Theta_N(\xi - a/q) \vrho_s(\xi - a/q),
\]
where $\vrho_s(\xi)=\eta\big(Q_{s+1}^{3dA}\xi\big)$ and
$(\Theta_N: N\in\NN)$ is a sequence of multipliers on $\RR^d$
obeying \eqref{eq:54}.
\begin{theorem}
	\label{th:1}
	Let $p \in (1, \infty)$ then there exists $C_{p}>0$ such that for any
	$s \in\NN_0$ and for every sequence $(f_t:
        t\in\NN)\in\ell^p\big(\ell^2\big(\ZZ^d\big)\big)$ we have
	\[
          \Big\|\Big(\sum_{t\in\NN}\big|\mathcal
          N\big(\calF^{-1}\big(\Omega_N^s \hat{f_t}\big):
          N\in\NN\big)\big|^2\Big)^{1/2}\Big\|_{\ell^p}\le C_p\mathbf
          B_p \log(s+2)
			\Big\|\big(\sum_{t\in\NN}|f_t|^2\big)^{1/2}\Big\|_{\ell^p}.
	\]
\end{theorem}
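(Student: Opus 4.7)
The plan is to adapt the proof of Theorem \ref{th:3} to the present setting, which differs in three respects: the single multiplier $\Theta$ is replaced by a sequence $\big(\Theta_N\big)$ with the operator being the seminorm $\calN$ of the outputs; scalar Gaussian coefficients $G(a/q)$ appear; and the rational frequencies range only over the annulus $\mathscr U_{(s+1)^l}\setminus\mathscr U_{s^l}$ with the narrower cutoff $\vrho_s$. None of these changes is structurally new: the first is handled by Proposition \ref{prop:4}, which is already stated for a sequence $\big(\Theta_N\big)$ and a general seminorm $\calN$; the second is harmless since $\abs{G(a/q)}\le 1$; the third merely fixes the parameter scales.

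First I would apply Lemma \ref{lem:12} to decompose $\mathscr U_{(s+1)^l}$ into at most $C_\rho \log((s+1)^l)\lesssim \log(s+2)$ disjoint subfamilies, each indexed by a set $\Lambda$ with the $\mathcal O$-property. Correspondingly $\Omega_N^s$ splits into $O(\log(s+2))$ pieces $\Omega_N^{s,\Lambda}$, so after collecting the $\log(s+2)$ factor the goal is reduced to proving, for each fixed $\Lambda$ with $\mathcal O$-property,
\[
  \Big\|\Big(\sum_{t\in\NN}\calN\big(\calF^{-1}\big(\Omega_N^{s,\Lambda}\hat{f_t}\big):N\in\NN\big)^2\Big)^{1/2}\Big\|_{\ell^p}
  \lesssim \boldB_p\,\Big\|\Big(\sum_{t\in\NN}|f_t|^2\Big)^{1/2}\Big\|_{\ell^p}
\]
with implicit constant depending only on $\rho$ and $p$.

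Next I would carry out the Chinese-Remainder decomposition of Section \ref{sec:6} to write $\Omega_N^{s,\Lambda}(\xi)$ in the form \eqref{eq:13}, with $\eta_N$ replaced by $\vrho_s$ and each summand carrying an extra scalar $G(a/q)$. By duality and interpolation it suffices to treat $p=2r$ for arbitrary $r\in\NN$. At this point I would repeat verbatim the square-function machinery behind Theorem \ref{thm:5}: the strong orthogonality Lemma \ref{lem:15} is a pure support argument and is unaffected either by the presence of the scalar factors $G(a/q)$, which may be absorbed via Cauchy--Schwarz, or by the replacement of $\eta_N$ by the narrower $\vrho_s$; the recursive application of Lemmas \ref{lem:13} and \ref{lem:14} then reduces matters to an analogue of \eqref{eq:134} in which each $u$-th summand carries the sequence $\Theta_N$ and is measured in the $\calN$ seminorm along $t$.

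Finally I would invoke Proposition \ref{prop:4} exactly as in Proposition \ref{prop:5}: with sampling modulus $B_h=q_{j_1,s_{j_1}}\cdot\ldots\cdot q_{j_m,s_{j_m}}\cdot Q_0\le e^{(s+1)^{1/10}}$, which is comfortably smaller than the inverse scale of $\vrho_s$ (so the hypothesis $h\ge 2Q^{d+1}$ is met), Proposition \ref{prop:4} yields the factor $\boldB_p$ coming from \eqref{eq:54}, while the vector-valued coupling in $t$ is delivered by Lemma \ref{lem:30}. The residual estimate --- bounding the $\ell^{2r}$ norm of the remaining $\calN$-free square function with cutoffs $\vrho_s(\cdot-u)$ --- is the verbatim analogue of the closing argument in the proof of Theorem \ref{thm:500}, relying on $\vrho_s=\phi*\psi$ (Remark \ref{rem:1}), Plancherel's theorem, and the disjointness of the translates $\vrho_s(\cdot-u)$. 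The only genuine obstacle is checking that the interaction between $\calN$ and the combinatorial orthogonality of Lemmas \ref{lem:13}--\ref{lem:15} is clean; this is essentially bookkeeping, since those lemmas depend only on the Fourier supports of the individual summands, which the seminorm $\calN$ (applied after passing to physical space in $t$) does not disturb.
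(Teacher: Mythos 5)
Your plan inverts the order of operations in a way that runs into a real obstruction. You propose to carry the sequence $\big(\Theta_N\big)$ and the seminorm $\calN$ \emph{through} the square-function machinery of Lemmas \ref{lem:13}, \ref{lem:14}, \ref{lem:15} and Theorem \ref{thm:5}, and only invoke Proposition \ref{prop:4} at the very end. But those lemmas are not ``pure support arguments'': their proofs open with the expansion
\[
\big\|\mathcal S_{L,M}^{\sigma}\big\|_{\ell^{2r}}^{2r}
=\sum_{(w_1,\ldots,w_r)}\sum_{(h_0(i),h_1(i))}\sum_{x}\prod_{i=1}^{r}
F_{h_0(i)}^{\sigma,w_i}(x)\,\overline{F_{h_1(i)}^{\sigma,w_i}(x)},
\]
i.e.\ they exploit the multilinear structure of the $2r$-th power of a sum. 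Only after this expansion does the Fourier-support orthogonality of Lemma \ref{lem:15} kill the cross terms. Wrapping $\calN$ around the inner sum, say forming $\calN\big(\sum_h F_h^{\sigma,w,N}(x):N\big)$, destroys this structure: the $\calN$-value is a single nonnegative scalar, not a bilinear expression in the $F_h$, so the expansion into cross products is not available, and the recursion in Lemmas \ref{lem:13}--\ref{lem:14} does not go through. The alternative of bounding $\calN$ by the $\ell^2$ norm in $N$ is admissible for a general seminorm but is far too lossy for the maximal operator you ultimately need. Calling this step ``essentially bookkeeping'' understates the issue; it is precisely where the argument would fail.

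The paper sidesteps this by applying the transference \emph{first}, not last. The proof samples with the single common modulus $Q_s$ (no $\mathcal O$-property decomposition and no Chinese-Remainder splitting are needed at all), rewrites each term via the translation identity
\[
\calF^{-1}\big(\Theta_N(\cdot-a/q)\vrho_s(\cdot-a/q)\hat f\big)(Q_sx+m)
=\calF^{-1}\big(\Theta_N\vrho_s\hat f(\cdot+a/q)\big)(Q_sx+m)\,e^{-2\pi i (a/q)\cdot m},
\]
and then invokes Proposition \ref{prop:4} once, in one stroke removing both $\Theta_N$ and the seminorm $\calN$. What remains is a scalar estimate for the Gauss-sum-weighted projector $\tilde\Pi_s^G$, which (after \eqref{eq:69}) is compared to $\mu_J\,\tilde\Pi_s$ using Proposition \ref{pr:0} and \eqref{eq:95}: the pointwise error $O(e^{-\frac12(s+1)^{1/2}})$ is absorbed by interpolation, $\mu_J$ is trivially $\ell^p$-bounded, and the unweighted $\tilde\Pi_s$ is controlled directly by Theorem \ref{th:3}. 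The $\log(s+2)$ loss arrives through this last application of Theorem \ref{th:3}, not through a fresh $\mathcal O$-property decomposition. If you want to salvage your route, the fix is essentially to adopt this order: sample and transfer via Proposition \ref{prop:4} first, and only then invoke Theorem \ref{th:3} as a black box, rather than re-running Section \ref{sec:6}'s recursion with $\calN$ inside.
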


\begin{proof}
	Let us observe that
	$$
	\calF^{-1}\big(\Theta_N (\cdot - a/q) \vrho_s(\cdot - a/q) \hat{f} \big)(Q_s x + m)
	=
	\calF^{-1}\big(\Theta_N \vrho_s \hat{f}(\cdot  + a/q)\big)(Q_sx + m) e^{-2\pi i \sprod{(a/q)}{m}}.
	$$
	Therefore,
        \begin{align*}
          \bigg\|\Big(\sum_{t\in\NN}\big|\mathcal
          N &\big(\calF^{-1}\big(\Omega_N^s \hat{f_t}\big):
          N\in\NN\big)\big|^2\Big)^{1/2}\bigg\|_{\ell^p}^p\\
	=&
	\sum_{m \in \NN_{Q_s}^d}
\bigg\|\Big(\sum_{t\in\NN}\big|\mathcal N\big(\calF^{-1}\big(\Theta_N \vrho_s F_t(\cdot\ ;
        m) \big)(Q_s x + m): N \in \NN\big)
	\big|^2\Big)^{1/2}\bigg\|_{\ell^p(x)}^p,
        \end{align*}
	where
	\begin{equation}
		\label{eq:108}
		F_t(\xi; m)
		= \sum_{a/q \in \mathscr{U}_{(s+1)^l}\setminus \mathscr{U}_{s^l}} G(a/q) \hat{f_t}(\xi + a/q) e^{-2\pi i \sprod{(a/q)}{m}}.
	\end{equation}
	Now, by Proposition \ref{prop:4} and  \eqref{eq:108} we get
	\begin{align*}
		& \sum_{m \in \NN_{Q_s}^d}
\Big\|\Big(\sum_{t\in\NN}\big|\mathcal N\big(\calF^{-1}\big(\Theta_N \vrho_s F_t(\cdot\ ;
        m) \big)(Q_s x + m): N \in \NN\big)
	\big|^2\Big)^{1/2}\Big\|_{\ell^p(x)}^p\\
		& \qquad \qquad \le C_p^p\mathbf B_p^p
		\sum_{m \in \NN_{Q_s}^d}
\Big\|\Big(\sum_{t\in\NN}\big|\calF^{-1}\big(\vrho_s F_t(\cdot\ ;
        m) \big)(Q_s x + m)
	\big|^2\Big)^{1/2}\Big\|_{\ell^p(x)}^p\\
&\qquad \qquad =C_p^p\mathbf B_p^p
		\Big\lVert\Big(\sum_{t\in\NN}\big|
		\sum_{a/q \in \mathscr{U}_{(s+1)^l}\setminus \mathscr{U}_{s^l}} G(a/q) \calF^{-1}\big(\vrho_s(\cdot - a/q) \hat{f_t}\big)\big|^2\Big)^{1/2}
		\Big\rVert_{\ell^p}^p.
	\end{align*}
In view of \eqref{eq:69} the proof will be completed if we show that
\begin{align}
  \label{eq:79}
  \big\|\mathcal
  F^{-1}\big(\tilde{\Pi}_s^G\hat{f}\big)\big\|_{\ell^p}\lesssim \log(s+2)\|f\|_{\ell^p},
\end{align}
where
\[
  \tilde{\Pi}_s^G(\xi)=\sum_{a/q \in \mathscr{U}_{(s+1)^l}\setminus \mathscr{U}_{s^l}} G(a/q) \vrho_s(\xi - a/q). 
\]
Arguing in a similar way as in the proof of Theorem
\ref{thm:7}, by \eqref{eq:95} we obtain
\begin{align}
  \label{eq:94}
  \big|\tilde{\Pi}_s^G(\xi)-\mu_J(\xi)\tilde{\Pi}_s(\xi)\big|
	\lesssim e^{-\frac{1}{2}(s+1)^{1/2}},
\end{align}
where
\[
  \tilde{\Pi}_s(\xi)=\sum_{a/q \in \mathscr{U}_{(s+1)^l}\setminus \mathscr{U}_{s^l}} \vrho_s(\xi - a/q). 
\]
Therefore, \eqref{eq:94} combined with Plancherel's theorem yields
\begin{align}
\label{eq:112}
  \big\|\mathcal
  F^{-1}\big((\tilde{\Pi}_s^G-\mu_J\tilde{\Pi}_s)\hat{f}\big)\big\|_{\ell^2}\lesssim e^{-\frac{1}{2}(s+1)^{1/2}}\|f\|_{\ell^2}.
\end{align}
We can conclude by interpolation with \eqref{eq:112} that
	\[
  \big\|\mathcal
  F^{-1}\big((\tilde{\Pi}_s^G-\mu_J\tilde{\Pi}_s)\hat{f}\big)\big\|_{\ell^p}\lesssim \|f\|_{\ell^p},
\]
since by Theorem \ref{th:3} we have
\[ 
 \big\|\mathcal
  F^{-1}\big(\tilde{\Pi}_s\hat{f}\big)\big\|_{\ell^p}\lesssim \log(s+2) \|f\|_{\ell^p} 
\]
and by the trivial bound, due to \eqref{eq:7}, we have 
\[
  \big\|\mathcal
  F^{-1}\big(\tilde{\Pi}_s^G\hat{f}\big)\big\|_{\ell^p}\lesssim
  |\mathscr{U}_{(s+1)^l}| \cdot \|f\|_{\ell^p}
\lesssim e^{(d+1)(s+1)^{1/10}}\|f\|_{\ell^p}.
\]
This establishes the bound in \eqref{eq:79} and the proof of Theorem
\ref{th:1} is finished. 
\end{proof}

\begin{theorem}
\label{thm:8}
Let $p\in(1, \infty)$ then there is a constant $C_p>0$ such that for any
$s\in\NN_0$ and for every sequence $(f_t: t\in\NN)\in\ell^p\big(\ell^2\big(\ZZ^d\big)\big)$ we have 
\[  
\Big\|\Big(\sum_{t\in\NN}\sup_{n\ge 2^{\kappa_s}}\big|\mathcal
  F^{-1}\big(\nu_{2^n}^s\hat{f_t}\big)\big|^2\Big)^{1/2}\Big\|_{\ell^p}
  \le C_p \log(s+2)
  \Big\|\big(\sum_{t\in\NN}|f_t|^2\big)^{1/2}\Big\|_{\ell^p}.
\]
\begin{proof}
  Theorem \ref{thm:17} guarantees that the sequence $(\Phi_{2^n}:
  n\in\NN_0)$ satisfies \eqref{eq:54}.  Thus in view of Theorem
  \ref{th:1} with $\Theta_N=\Phi_N$ and \eqref{eq:69} it suffices to prove that for any
  $n\ge 2^{\kappa_s}$ we have
\begin{align}
  \label{eq:125}
  \big\|\mathcal
  F^{-1}\big((\nu_{2^n}^s-\Omega_{2^n}^s)\hat{f}\big)\big\|_{\ell^p}\lesssim 2^{-c_p^1n}e^{c_p^2(s+1)^{1/10}}\|f\|_{\ell^p}
\end{align}
for some $c_p^1, c_p^2>0$. Obviously we have 
\begin{align}
	\nonumber
  	\big\|\mathcal F^{-1}\big((\nu_{2^n}^s-\Omega_{2^n}^s)\hat{f}\big)\big\|_{\ell^p}
	&\lesssim
	\abs{\mathscr{U}_{(s+1)^l}} \cdot \|f\|_{\ell^p} \\
	\label{eq:126}
	&\lesssim e^{(d+1)(s+1)^{1/10}}\|f\|_{\ell^p}.
\end{align}
  	Next, observe that $\vrho_s(\xi - a/q) - \eta_s(\xi - a/q) \neq 0$ implies that 
	$\abs{\xi_\gamma - a_\gamma/q} \geq (16 d)^{-1} Q_{s+1}^{-3 d\abs{\gamma}}$ for some
	$\gamma \in \Gamma$. Therefore, for $n \geq 2^{\kappa_s}$ we have
	$$
	2^{n\abs{\gamma}} \cdot \big\lvert \xi_{\gamma} - a_{\gamma}/q \big\rvert
	\gtrsim
	2^{n|\gamma|} Q_{s+1}^{-3d|\gamma|} \gtrsim 2^{n/2},
	$$
since 
\[
2^{n/2}Q_{s+1}^{-3d}\ge
2^{2^{\kappa_s-1}}e^{-3d(s+1)^{1/10}e^{(s+1)^{1/10}}}\ge e^{(s+1)^{1/10}}
\]
for sufficiently large $s\in\NN_0$. Thus using \eqref{eq:83}, we obtain
	$$
	\sabs{\Phi_{2^n}(\xi - a/q)} \lesssim 2^{-n/(2d)}.
	$$
	Hence, by \eqref{eq:20},
	\[
		\Big\lvert
		\sum_{a/q \in \mathscr{U}_{(s+1)^l}\setminus \mathscr{U}_{s^l}}
		G(a/q)
		\Phi_{2^n}(\xi - a/q) \big(\eta_s(\xi - a/q) - \vrho_s(\xi - a/q) \big)
		\Big\rvert
		\leq
		C
		(s+1)^{-\delta l}
		2^{-n/(2d)}.
	\]
	By Plancherel's theorem we conclude
        \begin{align}
          \label{eq:149}
          \big\|\mathcal
  F^{-1}\big((\nu_{2^n}^s-\Omega_{2^n}^s)\hat{f}\big)\big\|_{\ell^2}\lesssim
  2^{-n/(2d)}(s+1)^{-\delta l}\|f\|_{\ell^2}.
        \end{align}
Interpolating now \eqref{eq:149} with \eqref{eq:126} we obtain
\eqref{eq:125}, which completes the proof of Theorem \ref{thm:8}.
\end{proof}
\end{theorem}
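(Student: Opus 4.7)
The plan is to compare $\nu_{2^n}^s$ to the multiplier $\Omega_{2^n}^s$ (with the same Gauss sum weights but the \emph{wider} cutoff $\vrho_s=\eta(Q_{s+1}^{3dA}\:\cdot\:)$ in place of $\eta_s$), then invoke Theorem \ref{th:1} for the main piece and estimate the error by exploiting that $\Phi_{2^n}$ is very small on the annular region where $\vrho_s$ and $\eta_s$ disagree, provided $n\ge 2^{\kappa_s}$. First I would apply Theorem \ref{thm:17} to verify that $(\Phi_{2^n})_{n\in\NN_0}$ satisfies the continuous vector-valued hypothesis \eqref{eq:54}, which then lets me take $\Theta_N=\Phi_N$ in Theorem \ref{th:1}. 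This immediately yields
\[
\Big\|\Big(\sum_{t\in\NN}\calN\big(\calF^{-1}(\Omega_{2^n}^s\hat{f_t}):n\ge 2^{\kappa_s}\big)^2\Big)^{1/2}\Big\|_{\ell^p}
\lesssim \log(s+2)\Big\|\Big(\sum_{t\in\NN}|f_t|^2\Big)^{1/2}\Big\|_{\ell^p},
\]
so it remains to control the error $\nu_{2^n}^s-\Omega_{2^n}^s$.

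For the error, the strategy is to prove a single-scale $\ell^p$ bound with exponential decay in $n$ and then sum over $n\ge 2^{\kappa_s}$, using \eqref{eq:69} to pass from the scalar to the vector-valued version. The key observation is that $\vrho_s(\xi-a/q)-\eta_s(\xi-a/q)\neq 0$ forces $|\xi_\gamma-a_\gamma/q|\gtrsim Q_{s+1}^{-3d|\gamma|}$ for some $\gamma\in\Gamma$, whence for $n\ge 2^{\kappa_s}$,
\[
\big|2^{nA}(\xi-a/q)\big|_\infty \gtrsim 2^{n|\gamma|}Q_{s+1}^{-3d|\gamma|}\gtrsim 2^{n/2},
\]
because $\kappa_s$ is chosen so that $2^{2^{\kappa_s-1}}$ dwarfs $Q_{s+1}^{3d}\simeq e^{3d(s+1)^{1/10}e^{(s+1)^{1/10}}}$. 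Combining this with the van der Corput bound \eqref{eq:83} gives $|\Phi_{2^n}(\xi-a/q)|\lesssim 2^{-n/(2d)}$, and together with the Gauss sum estimate \eqref{eq:20} and disjointness of supports of $\eta_s(\:\cdot-a/q)$, Plancherel produces
\[
\big\|\calF^{-1}\big((\nu_{2^n}^s-\Omega_{2^n}^s)\hat{f}\big)\big\|_{\ell^2}\lesssim 2^{-n/(2d)}(s+1)^{-\delta l}\|f\|_{\ell^2}.
\]

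A crude $\ell^p$ bound from the triangle inequality gives at most $|\mathscr U_{(s+1)^l}|\|f\|_{\ell^p}\lesssim e^{(d+1)(s+1)^{1/10}}\|f\|_{\ell^p}$, and interpolating between this and the $\ell^2$ estimate yields
\[
\big\|\calF^{-1}\big((\nu_{2^n}^s-\Omega_{2^n}^s)\hat{f}\big)\big\|_{\ell^p}\lesssim 2^{-c_p^1 n}e^{c_p^2(s+1)^{1/10}}\|f\|_{\ell^p}.
\]
Summing over $n\ge 2^{\kappa_s}$ and combining with the Marcinkiewicz--Zygmund reduction \eqref{eq:69} shows the error term is $O(1)$ on the vector-valued side, since $2^{\kappa_s}$ was chosen to beat $(s+1)^{1/10}$ comfortably. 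The main obstacle is bookkeeping: one must carefully choose $\kappa_s$ and $Q_s$ so that both the pointwise decay of $\Phi_{2^n}$ on $\{\vrho_s\neq\eta_s\}$ and the summability of the error are guaranteed simultaneously, and also verify that Theorem \ref{thm:17}'s continuous estimate truly applies to the Radon-type averages whose Fourier transforms are the $\Phi_{2^n}$ — everything else is interpolation and an application of the already established Theorem \ref{th:1}.
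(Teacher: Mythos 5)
Your proposal follows the paper's proof essentially verbatim: reduce to Theorem \ref{th:1} with $\Theta_N=\Phi_N$ (justified by Theorem \ref{thm:17}), then control $\nu_{2^n}^s-\Omega_{2^n}^s$ by exploiting the decay of $\Phi_{2^n}$ on the region where $\vrho_s$ and $\eta_s$ disagree, using \eqref{eq:83} and \eqref{eq:20} for the $\ell^2$ bound, the trivial cardinality bound for the $\ell^p$ bound, and interpolation plus \eqref{eq:69} to sum over $n\ge 2^{\kappa_s}$. The decomposition, the key estimates, and the interpolation scheme all match the paper's argument.
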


 \section{Vector-valued maximal estimates for truncated singular
   integral operators}
\label{sec:7}

The purpose of this section is to prove Theorem \ref{thm:11}. In order to do so, in view
of Lemma \ref{lem:1}, it suffices to
consider the operators $T_N^\calQ$, where $\calQ$ is the canonical polynomial defined in Section \ref{sec:5}.
Let us fix necessary notation. Given a kernel $K$ satisfying \eqref{eq:3} and \eqref{eq:4} there are functions 
$(\seq{K_j}{j \in\ZZ})$ and a constant $C > 0$ such that for $x \neq 0$ we have
\begin{align}
  \label{eq:8}
K(x) = \sum_{j \in\ZZ} K_j(x),  
\end{align}
where for each $j \in \ZZ$ the kernel $K_j$ is supported inside $2^{j-2} \leq \norm{x} \leq 2^j$,
satisfies
\[
	\norm{x}^k \abs{K_j(x)} + \norm{x}^{k+1} \norm{\nabla K_j(x)} \leq C
\]
for all $x \in \RR^k$, and has integral
$0$ (see \cite[Chapter 6, \S4.5, Chapter 13,
\S5.3]{bigs}). In our case, the summation in \eqref{eq:8} will be restricted to $j\ge0$, since $x\in\ZZ^k\setminus\{0\}$.

Next, we define a sequence $(\seq{m_j}{j \geq 0})$ of functions on $\TT^d$ by
$$
m_j(\xi) = \sum_{y \in \ZZ^k} e^{2\pi i \sprod{\xi}{\calQ(y)}} K_j(y).
$$
Then for any finitely supported function  $f$ on $\ZZ^d$ we write
$$
\calF^{-1} \big(m_j \hat{f}\big)(x) = \sum_{y \in \ZZ^k} f(x - \calQ(y)) K_j(y).
$$
For $j \geq 0$ and $\xi\in\RR^d$ we set
$$
\Phi_j(\xi) = \int_{\RR^k} e^{2\pi i \sprod{\xi}{\calQ(y)}} K_j(y) {\: \rm d}y.
$$
Using multi-dimensional version of van der Corput's lemma (see \cite[Propositon 2.1]{sw})
we obtain
\begin{equation}
	\label{eq:18}
	\lvert \Phi_j(\xi) \rvert
	\lesssim
	\min\big\{1, \norm{2^{j A} \xi}_{\infty}\big\}^{-1/d},
\end{equation}
where $A$ is the $d\times d$ diagonal matrix defined in \eqref{eq:25}. Moreover, if $j \geq 1$ then we have
\begin{equation}
	\label{eq:19}
	\lvert \Phi_j(\xi) \rvert
	=
	\Big\lvert
	\Phi_j(\xi) - \int_{\RR^k} K_j(y) {\: \rm d}y
	\Big\rvert
	\lesssim
	\min\big\{1, \norm{2^{jA} \xi}_{\infty} \big\}.
\end{equation}
The estimates in \eqref{eq:18} and \eqref{eq:19} are analogues of \eqref{eq:83} and \eqref{eq:84} respectively. 
Finally, let $\Psi_n = \sum_{j = 0}^n \Phi_j$.

Our aim is to prove  the  following dyadic version of Theorem \ref{thm:11}.
\begin{theorem}
  \label{thm:12}
	For every $p\in(1, \infty)$  there is $C_{p} > 0$ such that for all sequences
	$\big(f_t: t\in\NN\big) \in \ell^p\big(\ell^2\big(\ZZ^{d}\big)\big)$ we have
	\[
      \Big\lVert\Big(
	\sum_{t\in\NN}\sup_{n\in\NN_0}\Big|\sum_{j=0}^n\calF^{-1} \big(m_j \hat{f_t}\big)\Big|^2\Big)^{1/2}
	\Big\rVert_{\ell^p}\le
	C_{p}\Big\|\big(\sum_{t\in\NN}|f_t|^2\big)^{1/2}\Big\|_{\ell^p}.
	\]
\end{theorem}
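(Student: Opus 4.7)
The plan is to mirror, level by level, the architecture of Section~\ref{sec:4}, replacing the dyadic averages $M_{2^n}$ by the partial sums $T_n f = \sum_{j=0}^n \calF^{-1}(m_j \hat f)$. Using the lifting procedure (Lemma~\ref{lem:1}) we may work with the canonical polynomial $\calQ$. At each dyadic scale $j$ I would split $m_j = m_j \Xi_j + m_j(1-\Xi_j)$, where $\Xi_n$ is the Ionescu--Wainger cut-off built from $\mathscr U_{n^l}$ as in Section~\ref{sec:4}. Then
\[
\sup_{n\ge 0} \Big|\sum_{j=0}^n \calF^{-1}(m_j \hat f)\Big|
\le
\sum_{j=0}^\infty \bigl|\calF^{-1}\bigl(m_j(1-\Xi_j)\hat f\bigr)\bigr|
+\sup_{n\ge 0}\Big|\sum_{j=0}^n \calF^{-1}(m_j \Xi_j \hat f)\Big|,
\]
and by Lemma~\ref{lem:30} the vector-valued bound reduces to a scalar-valued analysis.

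For the highly oscillatory piece I would argue, exactly as in \eqref{eq:63}--\eqref{eq:64}, that $|m_j(\xi)(1-\Xi_j(\xi))|\lesssim (j+1)^{-\alpha}$ for arbitrarily large $\alpha$ whenever $\rho$ and $l$ are chosen to satisfy \eqref{eq:211} and $l$ is taken large; here the relevant Weyl-sum input for the kernels $K_j$ follows from Theorem~\ref{thm:3} together with the standard partition of unity of $K$. Combining Plancherel with the trivial $\ell^p$ bound $\|\calF^{-1}(m_j\hat f)\|_{\ell^p}\lesssim \|f\|_{\ell^p}$ (which follows from the continuous Calder\'on--Zygmund theory via transference and Theorem~\ref{th:3}) and interpolating gives $\|\calF^{-1}(m_j(1-\Xi_j)\hat f)\|_{\ell^p}\lesssim (j+1)^{-2}\|f\|_{\ell^p}$, which is summable in $j$; Lemma~\ref{lem:30} upgrades this to the $\ell^p(\ell^2)$ estimate.

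For the asymptotic piece I would use the analogue of Proposition~\ref{pr:0} for the truncated singular kernel $K_j$ (this is the standard Hardy--Littlewood/Ionescu--Wainger approximation) to replace $m_j \Xi_j$ by $\sum_{a/q\in\mathscr U_{j^l}}G(a/q)\Phi_j(\xi - a/q)\eta_j(\xi-a/q)$ up to an error that is summable in $j$ in $\ell^p$, and then carry out the telescoping decomposition in the denominator level $s$ as in \eqref{eq:75}, arriving at the operators with multipliers
\[
\nu_j^s(\xi)=\sum_{a/q\in\mathscr U_{(s+1)^l}\setminus\mathscr U_{s^l}}
G(a/q)\Phi_j(\xi-a/q)\eta\bigl(2^{s(A-\chi I)}(\xi-a/q)\bigr).
\]
The goal is then to prove
\[
\Big\|\Big(\sum_{t\in\NN}\sup_{n\ge 0}\Big|\sum_{j=0}^n\calF^{-1}(\nu_j^s\hat f_t)\Big|^2\Big)^{1/2}\Big\|_{\ell^p}
\lesssim (s+1)^{-2}\Big\|\Big(\sum_{t\in\NN}|f_t|^2\Big)^{1/2}\Big\|_{\ell^p},
\]
which is summable in $s$. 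As in Section~\ref{sec:4}, I would prove an $\ell^2$-estimate with decay $(s+1)^{-\delta l+1}$ (using $|G(a/q)|\lesssim q^{-\delta}$, Plancherel, and the pointwise bounds \eqref{eq:18}--\eqref{eq:19} to control $\Phi_{j+1}-\Phi_j$ and the tail $\Phi_j$ for $j$ large), and an $\ell^p$-estimate with at most $s\log(s+2)$ growth; then taking $l$ sufficiently large and interpolating finishes the $s$-level bound.

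The main obstacle, and the only place where the partial-sum structure (as opposed to the dyadic averages of Section~\ref{sec:4}) genuinely intervenes, is the $\ell^p$-estimate at each level~$s$. Setting $a_n = \sum_{j=0}^n \calF^{-1}(\nu_j^s\hat f)$, I would split the supremum into $n\le 2^{\kappa_s}$ and $n>2^{\kappa_s}$ with $\kappa_s\simeq (s+1)^{1/10}$. For $n>2^{\kappa_s}$ the whole telescoping tail can be handled by Proposition~\ref{prop:4} (transference), reducing matters to a continuous maximal truncated Radon singular integral, whose $L^p(\ell^2)$ boundedness is Theorem~\ref{thm:18} of the Appendix, together with Theorem~\ref{th:3} for the multiplier $\sum_{a/q}G(a/q)\vrho_s(\xi-a/q)$. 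For $n\le 2^{\kappa_s}$, the differences $a_{(m+1)2^i}-a_{m2^i}=\sum_{j=m2^i+1}^{(m+1)2^i}\calF^{-1}(\nu_j^s\hat f)$ appearing in Lemma~\ref{lem:6} are themselves partial sums of singular-integral-type multipliers; after Khintchine's randomisation the resulting operator has continuous symbol $\sum_j \varepsilon_j(\omega)(\Phi_{j+1}-\Phi_j)(\cdot-a/q)$, which is a Calder\'on--Zygmund Radon multiplier uniformly in the choice of signs, so Theorem~\ref{th:3} yields the requisite $\log(s+2)$ bound per level $i$, producing the total factor $\kappa_s \log(s+2)\lesssim s\log(s+2)$. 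Balancing this against the $(s+1)^{-\delta l+1}$ gain on $\ell^2$ and choosing $l$ large enough concludes the proof.
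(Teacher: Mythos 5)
Your proposal is correct and follows the same architecture as the paper's Section~\ref{sec:7}: lifting to the canonical polynomial, splitting $m_j = m_j\Xi_j + m_j(1-\Xi_j)$, controlling the oscillatory piece via Theorem~\ref{thm:3} and interpolation, approximating the asymptotic piece via a truncated-kernel analogue of Proposition~\ref{pr:0} (this is Proposition~\ref{prop:1} in the paper), reducing to the level-$s$ multipliers $\nu_j^s$, and balancing an $\ell^2$ estimate with $(s+1)^{-\delta l +1}$ decay against an $\ell^p$ estimate of order $s\log(s+2)$ obtained by splitting the supremum at $n = 2^{\kappa_s}$. The only slip is notational: in the Rademacher--Menshov block at level $i$, the randomised continuous symbol is $\sum_j \varepsilon_j(\omega)\sum_{m\in I_j^i}\Phi_m$ (equivalently, a difference of the cumulative symbols $\Psi_{(j+1)2^i} - \Psi_{j2^i}$, with $\Psi_n = \sum_{m\le n}\Phi_m$), not $\sum_j\varepsilon_j(\omega)(\Phi_{j+1}-\Phi_j)$ as you wrote, which corresponds to the averaging case of Section~\ref{sec:4}; since both kinds of block sums are uniformly Calder\'on--Zygmund in the signs, this does not affect the argument.
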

It is easy to see that Theorem \ref{thm:12} combined with Theorem
\ref{thm:10} implies Theorem \ref{thm:11}. Indeed, for any $f\ge0$ we have the following
pointwise bound
\[
\sup_{N\in\NN}\big\lvert
	T_N f(x) 
	\big\rvert
	\lesssim\sup_{n\in\NN_0}
	\Big\lvert
	\sum_{j = 0}^n \calF^{-1}\big( m_j \hat{f} \big)(x)
	\Big\rvert
	+
\sup_{N\in\NN}\big\lvert
	M_N f(x) 
	\big\rvert.
        \]
        
        The strategy of the proof of Theorem \ref{thm:12} is much the
        same as for the proof of the result for averaging operators
        from Section \ref{sec:4}. However, there are some changes
        which could cause some confusions.  Therefore, for the
        convenience of the reader we indicate the places where the
        changes have to be done.

As in Section \ref{sec:4}, with the aid of \eqref{eq:9}, we write
\begin{equation}
\label{eq:165}
\begin{aligned}
	\Big\|
	\Big(\sum_{t\in\NN}\sup_{n\in\NN_0}\Big|\sum_{j=0}^n\calF^{-1}\big(m_j\hat{f}_t\big)\Big|^2\Big)^{1/2}
	\Big\|_{\ell^p}
	 &\le
	\Big\|\Big(\sum_{t\in\NN}\sup_{n\in\NN_0}\Big|\sum_{j=0}^n\mathcal
  	F^{-1}\big(m_{j}(1-\Xi_{j})\hat{f_t}\big)\Big|^2\Big)^{1/2}\Big\|_{\ell^p}\\
	&\phantom{\le}+\Big\|\Big(\sum_{t\in\NN}\sup_{n\in\NN_0}\Big|\sum_{j=0}^n\mathcal
  	F^{-1}\big(m_{j}\Xi_{j}\hat{f_t}\big)\Big|^2\Big)^{1/2}\Big\|_{\ell^p}.
\end{aligned}
\end{equation}

\subsection{The estimate for the first term in \eqref{eq:165}}
We see that
\[
	\Big\|
	\Big(\sum_{t\in\NN}\sup_{n\in\NN_0}\Big|\sum_{j=0}^n\calF^{-1}\big(m_j(1-\Xi_{j})\hat{f}_t\big)\Big|^2\Big)^{1/2}
	\Big\|_{\ell^p}
	\le
	\sum_{j\in\NN_0}
	\Big\|\Big(\sum_{t\in\NN}\big|\calF^{-1}\big(m_{j}(1-\Xi_{j})\hat{f_t}\big)\big|^2\Big)^{1/2}
	\Big\|_{\ell^p}.
\]
Arguing in a similar way as in the proof of inequality \eqref{eq:10},
invoking Theorem \ref{thm:3}, we can show that
\[
  \Big\|\Big(\sum_{t\in\NN}\big|\mathcal
  F^{-1}\big(m_{j}(1-\Xi_{j})\hat{f_t}\big)\big|^2\Big)^{1/2}\Big\|_{\ell^p}\le
  (j+1)^{-2}
  \Big\|\big(\sum_{t\in\NN}|f_t|^2\big)^{1/2}\Big\|_{\ell^p}.
\]
\subsection{The estimate for the second term in \eqref{eq:165}}
Proceeding as in the proof of  Proposition \ref{pr:0} (see also  \cite[Proposition 3.2]{mir1}),
we establish an approximation formula  for the multipliers $m_j$.
\begin{proposition}
  \label{prop:1}
 There is a constant $C>0$ such
  that for every $j\in\NN$ and for every $\xi\in [-1/2, 1/2)^d$ satisfying 
        $$
	\Big\lvert \xi_\gamma - \frac{a_\gamma}{q} \Big\rvert \leq
        L_1^{-|\gamma|}L_2
	$$
	for all $\gamma \in \Gamma$, where  $1\le q\le L_3\le 2^{j/2}$, $a\in
        A_q$, $L_1\ge 2^j$  and $L_2\ge1$ we have
	\[ 
          \big|m_j(\xi)-G(a/q)\Phi_{j}(\xi-a/q)\big|\le 
          C\Big(L_32^{-j}
          +L_2L_32^{-j}\sum_{\gamma \in
            \Gamma}\big(2^j/L_1\big)^{|\gamma|}\Big)
	\]
\end{proposition}
Next, for every $j\in\NN_0$, let us define the multiplier
\[
  \nu_{j}(\xi)=\sum_{a/q\in\mathscr U_{j^l}}G(a/q)\Phi_{j}(\xi-a/q)\eta_j(\xi - a/q).
\]
In a similar way as in  \eqref{eq:73} one  can conclude
that for some $c_p>0$ we have
\begin{align}
  \label{eq:170}
  \big\|\mathcal
  F^{-1}\big((m_{j}\Xi_j-\nu_{j})\hat{f}\big)\big\|_{\ell^p}\lesssim 2^{-c_pj}\|f\|_{\ell^p}.
\end{align}
For every $j, s\in\NN_0$, we introduce the multiplier
\[
	\nu_{j}^s(\xi)=\sum_{a/q\in\mathscr U_{(s+1)^l}\setminus\mathscr U_{s^l}}
	G(a/q)\Phi_{j}(\xi-a/q)\eta_s(\xi-a/q).
\]
Arguing as in the proof of the estimate \eqref{eq:80}  we obtain that for some $c_p>0$,
\begin{align}
  \label{eq:172}
  \big\|\mathcal F^{-1}\big(\big(\nu_{j}-\sum_{0\le
    s<j}\nu_{j}^s\big)\hat{f}\big)\big\|_{\ell^p}\lesssim 2^{-c_pj}\|f\|_{\ell^p}.
\end{align}
Therefore, in view of \eqref{eq:69}, \eqref{eq:170} and \eqref{eq:172} it suffices to prove that
for every $s\in\NN_0$ we have
\begin{align*}
\Big\|\Big(\sum_{t\in\NN}\sup_{n\in\NN_0}\Big|\sum_{j=0}^n\mathcal
  F^{-1}\big(\nu_{j}^s\hat{f_t}\big)\Big|^2\Big)^{1/2}\Big\|_{\ell^p}
  \lesssim
  (s+1)^{-2}
  \Big\|\big(\sum_{t\in\NN}|f_t|^2\big)^{1/2}\Big\|_{\ell^p}.
\end{align*}
Here, the line of reasoning follows parallel to the proof of the inequality \eqref{eq:81}. 
This completes the sketch of the proof of Theorem \ref{thm:12}.

\appendix
\section{Vector-valued estimates for the continues Radon operators}
\label{sec:8}
This section is devoted to provide some vector-valued estimates for maximal operators of Radon type 
in the continuous settings. To fix
notation let
$\calP=(\calP_1,\ldots, \calP_{d_0}): \RR^k \rightarrow \RR^{d_0}$ be
a polynomial mapping whose components $\calP_j$ are
polynomials with real coefficients on $\RR^k$ such that $\calP_j(0) = 0$. One of the main
objects of our interest will be
\[ 
  \mathcal M_r^{\calP}f(x)=\frac{1}{|B_r|}\int_{B_r}f\big(x-\mathcal P(y)\big) {\: \rm d}y, \quad\text{for}\quad x\in\RR^{d_0},
\]
where $B_r$ is the Euclidean ball  centered at the origin with radius $r>0$.
We prove the following result.
\begin{theorem}
\label{thm:17}
  Assume that $p\in(1, \infty)$ then there is a constant $C_p>0$ such
  that for every sequence $(f_t: t\in\NN)\in
  L^p\big(\ell^2\big(\RR^{d_0}\big)\big)$ we have
\[
    \Big\|\Big(\sum_{t\in\NN}
	\sup_{r>0} \big|\mathcal M_r^{\calP}f_t\big|^2\Big)^{1/2}\Big\|_{L^p}
	\le C_p
    \Big\|\big(\sum_{t\in\NN}|f_t\big|^2\big)^{1/2}\Big\|_{L^p}.
\]
Moreover, the implied constant is independent of the coefficients of
the polynomial mapping $\calP$.
\end{theorem}

Suppose that $K\in\mathcal C^1\big(\RR^k\setminus\{0\}\big)$ is a
Calder\'on--Zygmund kernel satisfying the differential inequality 
\begin{align}
  \label{eq:29}
  |y|^k|K(y)|+|y|^{k+1}|\nabla K(y)|\le1
\end{align}
for all $y\in\RR^k\setminus\{0\}$ and the cancellation condition
\begin{align}
	\label{eq:30}
	\sup_{0<r<R<\infty}\bigg|\int_{r\le |y|\le R}K(y){\: \rm d}y\bigg|\le 1.
\end{align}
We are also interested in truncated singular Radon transforms
\[
  \mathcal T_r^{\calP}f(x)=\int_{|y|>r}f\big(x-\mathcal P(y)\big)K(y) {\: \rm d}y, \quad\text{for}\quad x\in\RR^{d_0}.
\]
 The second main result is the inequality in Theorem \ref{thm:18}.
\begin{theorem}
\label{thm:18}
  Assume that $p\in(1, \infty)$ then there is a constant $C_p>0$ such
  that for every sequence $(f_t: t\in\NN)\in
  L^p\big(\ell^2\big(\RR^{d_0}\big)\big)$ we have
	\[
    \Big\|\Big(\sum_{t\in\NN}\sup_{r>0}\big|\mathcal
    T_r^{\calP}f_t\big|^2\Big)^{1/2}\Big\|_{L^p}\le C_p
    \Big\|\big(\sum_{t\in\NN}|f_t\big|^2\big)^{1/2}\Big\|_{L^p}.
	\]
Moreover, the implied constant is independent of the coefficients of
the polynomial mapping $\calP$.
\end{theorem}
In view of \cite{deL} or \cite[Section 11]{bigs}, we can reduce the matters in Theorem \ref{thm:17}
and Theorem \ref{thm:18} to the canonical polynomial mapping $\calQ$, where for some set of multi-indices
$\Gamma\subseteq \NN^k\setminus\{0\}$,
\[
	\calQ = (\seq{\calQ_\gamma}{\gamma \in \Gamma}) : \RR^k \rightarrow \RR^d,
\]
with $\calQ_\gamma(x) = x^\gamma$ and $x^\gamma=x_1^{\gamma_1}\cdot\ldots\cdot x_k^{\gamma_k}$. Let $d$ be the cardinality
of the set $\Gamma$. We identify $\RR^d$ with the space of all vectors whose coordinates are labeled by multi-indices
$\gamma \in \Gamma$. From now on we work with $\calP=\calQ$. To simplify notation we set
$\mathcal M_r=\mathcal M_r^{\calQ}$ and $\mathcal T_t=\mathcal T_r^{\calQ}$.

\subsection{Proof of Theorem \ref{thm:17}} 
It suffices to show that for every $p\in(1, \infty)$ there is a constant $C_p>0$ such
  that for every $(f_t: t\in\NN)\in
  L^p\big(\ell^2\big(\RR^{d}\big)\big)$ we have
\[
    \Big\|\Big(\sum_{t\in\NN}\sup_{n\in\ZZ}\big|\mathcal
    M_{2^n}f_t\big|^2\Big)^{1/2}\Big\|_{L^p}\le C_p
    \Big\|\big(\sum_{t\in\NN}|f_t\big|^2\big)^{1/2}\Big\|_{L^p}.
\]
For this purpose let $\Phi$ be a compactly smooth function on $\RR^d$ such that
\begin{align}
  \label{eq:11}
\int_{\RR^d}\Phi(y){\: \rm d}y=1.
\end{align}
For any $r > 0$, we define the operator
\[
\mathfrak M_rf(x)=\Phi_r*f(x),
\]
where $\Phi_r(y)=r^{-{\rm tr}(A)}\Phi(r^{-A}y)$ and $A$ is the
diagonal $d \times d$ matrix as defined in \eqref{eq:25}. Then by the classical vector-valued maximal 
estimates (see \cite{bigs}) we have
\begin{align}
  \label{eq:47}
    \Big\|\Big(\sum_{t\in\NN}\sup_{n\in\ZZ}\big|\mathfrak
    M_{2^n}f_t\big|^2\Big)^{1/2}\Big\|_{L^p}\le C_p
    \Big\|\big(\sum_{t\in\NN}|f_t\big|^2\big)^{1/2}\Big\|_{L^p}.
\end{align}
Hence, for the proof of Theorem \ref{thm:17}, it is sufficient to show that
\begin{align}
  \label{eq:48}
    \Big\|\Big(\sum_{t\in\NN}\sum_{n\in\ZZ}\big|\big(\mathcal
    M_{2^n}-\mathfrak M_{2^n}\big)f_t\big|^2\Big)^{1/2}\Big\|_{L^p}\le C_p
    \Big\|\big(\sum_{t\in\NN}|f_t\big|^2\big)^{1/2}\Big\|_{L^p}.
\end{align}
Appealing to Lemma \ref{lem:30}, the proof of \eqref{eq:48} will be completed if we can prove
that for every $p\in(1, \infty)$ and $f\in L^p\big(\RR^d\big)$ we have
\begin{align}
  \label{eq:49}
	\Big\|\sum_{n\in\ZZ}\varepsilon_n(\omega)\big(\mathcal
    M_{2^n}-\mathfrak M_{2^n}\big)f\Big\|_{L^p}\le C_p\|f\|_{L^p}
\end{align}
for every Rademacher sequence $(\varepsilon_n(\omega): n\in\NN_0)$, where $\omega\in [0,
1]$ and $\varepsilon_n(\omega)\in\{-1, 1\}$.

While proving \eqref{eq:49}, without loss of generality, we may assume that $p \geq 2$.
Let $U_n=\varepsilon_n(\omega)\big(\mathcal M_{2^n}-\mathfrak M_{2^n}\big)$. By $S_j$ we
denote a Littlewood--Paley projection $\calF(S_j g)(\xi)=\phi_j(\xi)\calF g (\xi)$ associated
with a smooth partition of unity $(\phi_j: j\in\ZZ)$ on $\RR^d\setminus\{0\}$, such that
for each $j \in \ZZ$ we have $0\le \phi_j\le 1$, and
	\[
		\supp{\phi_j}
		\subseteq
		\big\{\xi\in\RR^d: 2^{-1} < \abs{2^{jA}\xi}_{\infty} < 2 \big\},
	\]
and 
\[
\sum_{j\in\ZZ}\phi_j(\xi)^2=1, \quad \text{for} \quad \xi\in \RR^d\setminus\{0\}.
\]
Note that 
\begin{align*}
	\Big\|\sum_{n\in\ZZ}\varepsilon_n(\omega)\big(\mathcal
    M_{2^n}-\mathfrak M_{2^n}\big)f\Big\|_{L^p}
	&\le\sum_{j\in\ZZ}\Big\|\sum_{n\in\ZZ}S_{j+n}U_nS_{j+n}f\Big\|_{L^p} \\
	&\lesssim\sum_{j\in\ZZ}
	\Big\|\Big(\sum_{n\in\ZZ}\big|U_nS_{j+n}f\big|^2\Big)^{1/2}\Big\|_{L^p}.
\end{align*}
Next, we show that there is a constant $\delta_p>0$ such that
\begin{align}
  \label{eq:51}
  \Big\|\Big(\sum_{n\in\ZZ}\big|U_nS_{j+n}f\big|^2\Big)^{1/2}\Big\|_{L^p}
	\lesssim 2^{-\delta_p|j|}\|f\|_{L^p}.
\end{align}
Set $\mathcal M_*f=\sup_{r>0}|\mathcal M^*_rf|$ and $\mathfrak M_*f=\sup_{r>0}|\mathfrak M^*_rf|$, where 
$\mathcal M^*_r$ and $\mathfrak M^*_r$ is the adjoint operator to $\mathcal M_r$ and
$\mathfrak M_r$, respectively. Let $g\in L^q\big(\RR^d\big)$ be such that $\|g\|_{L^q}\le 1$
and $g\ge0$, where $q=(p/2)'$. Then by the Cauchy--Schwarz inequality
\begin{align*}
  \int_{\RR^d}\sum_{n\in\ZZ}\big|U_nS_{j+n}f(x)\big|^2g(x){\: \rm  d}x
	& \le\int_{\RR^d}\sum_{n\in\ZZ}|S_{j+n}f(x)|^2\big(\mathcal
  M_*+\mathfrak M_*\big)g(x){\: \rm  d}x\\
& \le  \bigg\|\Big(\sum_{n\in\ZZ}\big|S_{j+n}f\big|^2\Big)^{1/2}\bigg\|_{L^p}^2
\big\|\big(\mathcal
  M_*+\mathfrak M_*\big)g\big\|_{L^q}
\lesssim \|f\|_{L^p}^2\|g\|_{L^q}
\end{align*}
since for all $q\in(1, \infty]$ there is a constant $C_q>0$ such that for all $g\in L^p\big(\RR^d\big)$,
\[
	\big\|\mathcal
  	M_*g\big\|_{L^q}+\big\|\mathfrak M_*g\big\|_{L^q}\le C_q\|g\|_{L^q}.
\]
Thus we have proven that
\begin{align}
\label{eq:52}
  \bigg\|\Big(\sum_{n\in\ZZ}\big|U_nS_{j+n}f\big|^2\Big)^{1/2}\bigg\|_{L^p}\lesssim \|f\|_{L^p}.
\end{align}
Now we refine the estimate \eqref{eq:52} by showing that there is $\delta>0$ such that
\begin{align}
\label{eq:53}
  \bigg\|\Big(\sum_{n\in\ZZ}\big|U_nS_{j+n}f\big|^2\Big)^{1/2}\bigg\|_{L^2}\lesssim 2^{-\delta|j|}\|f\|_{L^2}.
\end{align}
Then interpolation of \eqref{eq:52} with \eqref{eq:53} will imply
\eqref{eq:51} and the proof of Theorem \ref{thm:17} will be
completed. Let
\[
	m_{2^n}(\xi)=\frac{1}{|B_1|}\int_{B_1}e^{2\pi i \xi\cdot\calQ(2^ny)}{\: \rm d}y 
	\qquad\text{and}\qquad
	\mathfrak m_{2^n}(\xi)=\calF\Phi(2^{nA}\xi),
\]
be the multipliers associated with the averages $\mathcal M_{2^n}$
and $\mathfrak M_{2^n}$, respectively. Then
\[
  |m_{2^n}(\xi)-\mathfrak m_{2^n}(\xi)|\lesssim \min\big\{|2^{nA}\xi|_{\infty},
  |2^{nA}\xi|_{\infty}^{-1/d}\big\}.
\]
Hence, by Plancherel's theorem and the assumption on the supports for
$\phi_{n+j}$ we obtain
\begin{align*}
  \Big\|\Big(\sum_{n\in\ZZ}\big|U_nS_{j+n}f\big|^2\Big)^{1/2}\Big\|_{L^2}^2
	&=\int_{\RR^d}\sum_{n\in\ZZ}
	|m_{2^n}(\xi)-\mathfrak m_{2^n}(\xi)|^2|\phi_{j+n}(\xi)|^2|\mathcal F f(\xi)|^2{\: \rm d}\xi\\
	& 
	\lesssim 2^{-2\delta |j|}\int_{\RR^d}\sum_{n\in\ZZ}|\phi_{j+n}(\xi)|^2|\mathcal F f(\xi)|^2{\: \rm d}\xi
	\lesssim 2^{-2\delta |j|}\|f\|_{L^2}^2,
\end{align*}
for some $\delta>0$ as desired.

\subsection{Proof of Theorem \ref{thm:18}}
For each kernel $K$ satisfying \eqref{eq:29} and \eqref{eq:30} we have the decomposition \eqref{eq:8}.
Therefore for any $f \geq 0$, we have the pointwise bound
\[
  \sup_{r>0}|\mathcal T_rf(x)|\lesssim   \sup_{r>0}\mathcal M_rf(x)+
  \sup_{n\in\ZZ}\Big|\sum_{j\ge n}T_jf(x)\Big|,
\]
where
\[
	T_jf(x)=\int_{\RR^k}f\big(x-\calQ(y)\big)K_j(y){\: \rm d}y=\mu_{2^j}*f(x).
\]
Hence, Theorem \ref{thm:17} reduce the matters to proving that for every $p\in(1, \infty)$ 
there exists a constant $C_p>0$ such that for every sequence $(f_t: t\in\NN)\in
  L^p\big(\ell^2\big(\RR^{d_0}\big)\big)$ we have
	\[
    \Big\|\Big(\sum_{t\in\NN}\sup_{n\in\ZZ}\Big|\sum_{j\ge n}T_jf_t\Big|^2\Big)^{1/2}\Big\|_{L^p}\le C_p
    \Big\|\big(\sum_{t\in\NN}|f_t\big|^2\big)^{1/2}\Big\|_{L^p}.
	\]
Let
\[
	Tf(x)=\sum_{j\in\ZZ} T_jf(x).
\]
Recall that (see e.g. \cite{bigs})
\[
	\|Tf\|_{L^p}\lesssim \|f\|_{L^p}.
\]
As in \cite{DuoRdF} we decompose
\begin{align*}
\sum_{j\ge n}T_jf&=\Phi_{2^n}*\big(Tf-\sum_{j<n}T_jf\big)+(\delta_0-\Phi_{2^n})*\sum_{j\ge n}T_jf\\
	&=\Phi_{2^n}*Tf-\big(\Phi_{2^n}*\sum_{j<n}\mu_{2^j}\big)*f+\sum_{j\ge 0}(\delta_0-\Phi_{2^n})*\mu_{2^{j+n}}*f.
\end{align*}
Observe that by \eqref{eq:47} and \eqref{eq:69} we have
\begin{align*}
    \Big\|\Big(\sum_{t\in\NN}\sup_{n\in\ZZ}\big|\Phi_{2^n}*Tf_t\big|^2\Big)^{1/2}\Big\|_{L^p}
	&\lesssim 
	\Big\|\big(\sum_{t\in\NN}\big|Tf_t\big|^2\big)^{1/2} \Big\|_{L^p}\\
	&\lesssim
    \Big\|\big(\sum_{t\in\NN}|f_t\big|^2\big)^{1/2}\Big\|_{L^p}.
\end{align*}
Since the function $\Phi_{2^n}*\sum_{j<n}\mu_{2^j}$ defines a Schwartz 
function, for every $N\in\NN_0$ we have
\[
	\Big|\Phi_{2^n}*\sum_{j<n}\mu_{2^j}(x)\Big|
	\lesssim_N 2^{-{\rm tr}(A)n}\big(1+|2^{-nA}x|^2\big)^{-N},
\]
thus by the classical vector-valued estimates (see e.g. \cite{bigs}) we get
\[
  \Big\|\Big(\sum_{t\in\NN}\sup_{n\in\ZZ}\big|\big(\Phi_{2^n}*\sum_{j<n}\mu_{2^j}\big)*f_t\big|^2\Big)^{1/2}\Big\|_{L^p}
	\lesssim
  \Big\|\big(\sum_{t\in\NN}|f_t\big|^2\big)^{1/2}\Big\|_{L^p}.
\]
Now, it remains to prove that there exists $\delta_p>0$ such that
 \begin{align}
   \label{eq:197}
	\Big\|\Big(\sum_{t\in\NN}\sup_{n\in\ZZ}\big|(\delta_0-\Phi_{2^n})*\mu_{2^{j+n}}*f_t\big|^2\Big)^{1/2}\Big\|_{L^p}
	\lesssim2^{-\delta_p j}
	\Big\|\big(\sum_{t\in\NN}|f_t\big|^2\big)^{1/2}\Big\|_{L^p}.
 \end{align}
For $p\in(1, \infty)$, due to Theorem \ref{thm:17} and \eqref{eq:47} we have
\begin{align}
\label{eq:198}
	\Big\|\Big(\sum_{t\in\NN}\sup_{n\in\ZZ}\big|(\delta_0-\Phi_{2^n})*\mu_{2^{j+n}}*f_t\big|^2\Big)^{1/2}\Big\|_{L^p}
	\lesssim
  	\Big\|\big(\sum_{t\in\NN}|f_t\big|^2\big)^{1/2}\Big\|_{L^p}.
 \end{align}
For $p=2$ we show that there exists $\delta>0$ such that
\begin{align}
   \label{eq:199}
	\Big\|\Big(\sum_{t\in\NN}\sup_{n\in\ZZ}\big|(\delta_0-\Phi_{2^n})*\mu_{2^{j+n}}*f_t\big|^2\Big)^{1/2}\Big\|_{L^2}
	\lesssim2^{-\delta j}
  	\Big\|\big(\sum_{t\in\NN}|f_t\big|^2\big)^{1/2}\Big\|_{L^2}.
 \end{align}
Then interpolation \eqref{eq:198} with \eqref{eq:199} will establish \eqref{eq:197} and the proof of Theorem \ref{thm:18}
will be completed. For the proof of \eqref{eq:199}, it suffices to show
\[ 
  \big\|\sup_{n\in\ZZ}\big|(\delta_0-\Phi_{2^n})*\mu_{2^{j+n}}*f\big|\big\|_{L^2}\lesssim
  2^{-\delta j}\|f\|_{L^2}.
\]
Since
\[
	\big|1-\mathcal F\Phi\big(2^{nA}\xi\big)\big|\lesssim \big|2^{nA}\xi\big|_{\infty}^{1/d},
\]
and
\[
	\big|\calF\mu_{2^{j+n}}(\xi)\big|\lesssim \min\big\{\big|2^{(j+n)A}\xi\big|_{\infty},
  	\big|2^{(j+n)A}\xi\big|_{\infty}^{-1/d}\big\},
\]
by Plancherel's theorem we see that
\begin{align*}
	& \big\|\sup_{n\in\ZZ}\big|(\delta_0-\Phi_{2^n})*\mu_{2^{j+n}}*f\big|\big\|_{L^2}^2
	\le
	\int_{\RR^d}\sum_{n\in\ZZ}\big|\big(1-\mathcal F\Phi\big(2^{nA}\xi\big)\big)\calF\mu_{2^{j+n}}(\xi)\big|^2
	|\calF f(\xi)|^2{\:\rm d}\xi\\
	&\qquad \qquad \lesssim
	\int_{\RR^d}\sum_{n\in\ZZ}\big|2^{nA}\xi\big|_{\infty}^{1/d}
	\big|2^{(j+n)A}\xi\big|_{\infty}^{-1/d}\min\big\{\big|2^{(j+n)A}\xi\big|_{\infty},
	 \big|2^{(j+n)A}\xi\big|_{\infty}^{-1/d}\big\}|\calF f(\xi)|^2{\:\rm d}\xi\\
	& \qquad \qquad \lesssim  2^{-j/d} \int_{\RR^d}\sum_{n\in\ZZ}\min\big\{\big|2^{(j+n)A}\xi\big|_{\infty},
  	\big|2^{(j+n)A}\xi\big|_{\infty}^{-1/d}\big\}|\calF f(\xi)|^2{\:\rm d}\xi,
\end{align*}
which is bounded by $2^{-j/d}\|f\|_{L^2}^2$ as claimed. This completes the proof of Theorem \ref{thm:18}.



\begin{thebibliography}{99}

\bibitem{bou1}
J.~Bourgain, \emph{{O}n the maximal ergodic theorem for certain subsets of the
  integers}, Israel J. Math. \textbf{61} (1988), 39--72.

\bibitem{bou2}
J.~Bourgain, \emph{{O}n the pointwise ergodic theorem on {$L^p$} for arithmetic
  sets}, Israel J. Math. \textbf{61} (1988), 73--84.

\bibitem{bou}
J.~Bourgain, \emph{Pointwise ergodic theorems for arithmetic sets. {W}ith an
  appendix by the author, {H}arry {F}urstenberg, {Y}itzhak {K}atznelson and
  {D}onald {S}. {O}rnstein.}, Publ. Math.-Paris \textbf{69} (1989), no.~1,
  5--45.

\bibitem{ccw}
A.~Carbery, M.~Christ, and J.~Wright, \emph{Multidimensional van der {C}orput
  and sublevel set estimates}, J. Amer. Math. Soc. (1999), no.~4, 981--1015.

\bibitem{cot}
M.~Cotlar, \emph{A unified theory of {H}ilbert transforms and ergodic
  theorems}, Rev. Mat. Cuyana \textbf{1} (1955), no.~2, 105--167.

\bibitem{daven}
H.~Davenport, \emph{On a principle of {L}ipschitz}, J. London Math. Soc
  \textbf{26} (1951), 179--183.

\bibitem{deL}
K.~deLeeuw, \emph{{O}n {$L^p$} multipliers}, Ann. Math. \textbf{81} (1965),
  364--379.

\bibitem{DuoRdF}
J.~Duoandikoetxea and J.L. Rubio~de Francia, \emph{Maximal and singular
  integral operators via {F}ourier transform estimates}, Invent. Math.
  \textbf{84} (1986), no.~3, 541--561.

\bibitem{imsw}
A.D. Ionescu, A.~Magyar, E.M. Stein, and S.~Wainger, \emph{Discrete {R}adon
  transforms and applications to ergodic theory}, Acta Math. (2007),
  no.~198, 231--298.

\bibitem{iw}
A.D. Ionescu and S.~Wainger, \emph{{$L^p$} boundedness of discrete singular
  {R}adon transforms}, J. Amer. Math. Soc. \textbf{19} (2006), no.~2, 357--383.


\bibitem{ll} A. Lewko, M. Lewko \emph{Estimates for the square
  variation of partial sums of Fourier series and their
  rearrangements}, J. Funct.  Anal. \textbf{262}, (2012) no.~6,
2561--2607,


\bibitem{MSW}
A.~Magyar, E.M. Stein, and S.~Wainger, \emph{Discrete analogues in harmonic
  analysis: {S}pherical averages}, Ann. Math. \textbf{155} (2002), 189--208.

\bibitem{mir1} M.~Mirek
\emph{Square function estimates for discrete Radon
  transforms}, Anal. \& PDE {\bf 11}, (2018), no. 3, 583--608.


\bibitem{mst2} M.~Mirek, E.M. Stein, and B.~Trojan,
\emph{{$\ell^p\big(\ZZ^d\big)$}-estimates for discrete operators of
  {R}adon type: {V}ariational estimates}, Invent. Math. \textbf{209}
(2017), no.~3, 665--748.

\bibitem{mt3}
M.~Mirek and B.~Trojan, \emph{Discrete maximal functions in higher dimensions
  and applications to ergodic theory}, Amer. J. Math.  \textbf{138}, (2016), no.~6, 1495-1532.

\bibitem{rrt}
J.L. Rubio~de Francia, F.J. Ruiz, and J.L. Torrea,
  \emph{{C}alder{\'o}n--{Z}ygmund theory for operator-valued kernels}, Adv.
  Math. \textbf{62} (1986), no.~1, 7--48.


\bibitem{bigs}
E.M. Stein, \emph{{H}armonic {A}nalysis: {R}eal-{V}ariable {M}ethods,
  {O}rthogonality, and {O}scillatory {I}ntegrals}, Princeton University Press,
  Princeton, 1993.

\bibitem{sings}
E.M. Stein, \emph{{S}ingular {I}ntegrals
 and {D}ifferentiability {P}roprties of {F}unctions}, Princeton University Press,
  Princeton, 1970.
  
\bibitem{SW0}
E.M. Stein and S.~Wainger, \emph{Discrete analogues in harmonic analysis, {I}:
  {$\ell^2$} estimates for singular {R}adon transforms}, Amer. J. Math.
  \textbf{121} (1999), no.~6, 1291--1336.

\bibitem{sw}
E.M. Stein and S.~Wainger, \emph{Oscillatory integrals related to {C}arleson's theorem}, Math.
  Res. Lett. \textbf{8} (2001), 789--800.

\bibitem{kaczste}
H.~Steinhaus and S.~Kaczmarz, \emph{{T}heorie der {O}rthogonalreihen},
  Monografie Matematyczne, Tom 6, Instytut Matematyczny Polskiej Akademii Nauk
  (Warszawa--Lw{\'o}w), 1936.

\bibitem{va}
R.C.~Vaughan,
\emph{The Hardy--Littlewood method},
2nd edition,  Cambridge University Press,
Cambridge, 1997.
  
\bibitem{w0}
T.D. Wooley, \emph{Vinogradov's mean value theorem via efficient
  congruencing}, Ann. of Math. \textbf{175} (2012), no.~2, 1575--1627.

\end{thebibliography}
\end{document}